\RequirePackage{fix-cm}
\documentclass[smallextended]{svjour3}
\smartqed

\usepackage[utf8]{inputenc}
\usepackage[T1]{fontenc}
\usepackage{amsmath,amssymb,amsfonts}
\usepackage{graphicx}
\usepackage{booktabs}
\usepackage{xcolor}
\usepackage{bm}
\usepackage{algorithm}
\usepackage{algorithmicx}
\usepackage{algpseudocode}
\usepackage{float}
\usepackage{placeins}
\usepackage{listings}
\usepackage{hyperref}
\usepackage{adjustbox}
\hypersetup{hidelinks}

\graphicspath{{figures/}}

\newcommand{\RR}{\mathbb{R}}
\newcommand{\NN}{\mathbb{N}}
\newcommand{\ZZ}{\mathbb{Z}}
\newcommand{\TT}{\mathcal{T}}
\newcommand{\PP}{\mathcal{P}}
\newcommand{\HH}{\mathcal{H}}
\newcommand{\bs}[1]{\boldsymbol{#1}}
\newcommand{\norm}[1]{\left\lVert#1\right\rVert}
\newcommand{\abs}[1]{\left\lvert#1\right\rvert}

\allowdisplaybreaks

\begin{document}

\title{Scattered data interpolation on the torus by compactly supported multinode Shepard operators}
\titlerunning{Compactly supported multinode Shepard interpolation on the torus}

\author{Francesco Dell'Accio\thanks{Corresponding author: Francesco Dell'Accio.} \and Filomena Di Tommaso \and Rossana Lammirato \and Francesco Larosa}
\authorrunning{Dell'Accio et al.}

\institute{Francesco Dell'Accio \at
Department of Mathematics and Computer Science, University of Calabria, via P. Bucci cubo 30 A, 87036 Rende (CS), Italy\\
\email{francesco.dellaccio@unical.it}
\and
Filomena Di Tommaso \at
Department of Mathematics and Computer Science, University of Calabria, via P. Bucci cubo 30 A, 87036 Rende (CS), Italy\\
\email{filomena.ditommaso@unical.it}
\and
Rossana Lammirato \at
Department of Mathematics and Computer Science, University of Calabria, via P. Bucci cubo 30 A, 87036 Rende (CS), Italy\\
\email{lammiratorossana02@gmail.com}
\and
Francesco Larosa \at
Department of Mathematics and Computer Science, University of Calabria, via P. Bucci cubo 30 A, 87036 Rende (CS), Italy\\
\email{francesco.larosa@unical.it}
}

\date{Received: date / Accepted: date}

\maketitle

\begin{abstract}
We introduce a compactly supported multinode Shepard operator for the interpolation of scattered data on the torus embedded in $\RR^3$. The method combines local polynomial interpolation of total degree $d\in \NN$ with compactly supported Shepard-type weights, so that the approximation at each evaluation point depends only on neighbouring stencils of nodes. The torus is treated as an algebraic surface defined by a quartic polynomial, and Gr\"obner bases are used to construct reduced polynomial spaces on the surface by removing the redundancy induced by the defining equation. This yields local Vandermonde systems adapted to the toroidal geometry. We discuss the metric structure of the torus, show the local equivalence between the periodic parameter distance and the Euclidean distance inherited from $\RR^3$, and use this equivalence to motivate the compact support construction. We establish a uniform error estimate in terms of the maximal support radius and the local Lebesgue constants. Under uniform locality and stability assumptions, the method converges with order $d+1$ with respect to the fill distance. Numerical experiments on analytical test functions confirm polynomial reproduction and exhibit an error decay consistent with the theoretical analysis. The approach is also tested on Computational Fluid Dynamics data mapped onto the torus, including the interpolation of the velocity components, the reconstruction of the velocity magnitude from the interpolated components, and the reconstruction of a tangent velocity field through an orthonormal lifting of the interpolated components.
\keywords{Scattered data interpolation \and Torus \and Compactly supported Shepard operators \and Multinode Shepard operators \and Gr\"obner bases \and CFD data}
\subclass{65D05 \and 65D15 \and 41A05}
\end{abstract}

\section{Introduction}

Scattered data interpolation is a fundamental problem in numerical
analysis and scientific computing. It consists of reconstructing an
unknown function from values prescribed at irregularly distributed
points, a situation that naturally arises in geophysics, computer
graphics, fluid dynamics, medical imaging, and numerical simulations
on complex geometries \cite{Wendland}. Meshfree methods are particularly attractive in
this setting, since they do not require an underlying triangulation or
structured grid and can therefore accommodate irregular sampling and
non-Euclidean geometries.

Among meshfree interpolation techniques, Shepard-type methods have
received considerable attention because of their simplicity,
flexibility, and robustness with respect to scattered node
distributions \cite{DiTommaso,Shepard}. The classical Shepard operator constructs the
interpolant as a normalized weighted average of the data, with weights
depending on inverse powers of the distances from the evaluation point.
Although interpolatory, the classical construction reproduces only
constants and consequently has limited approximation order.
Multinode Shepard operators overcome this limitation by replacing
individual data values with local polynomial interpolants constructed
on suitably selected stencils of nodes. The normalized multinode
weights retain the partition-of-unity structure, while the local
polynomials provide higher-order reproduction and approximation.

The present work is conceived as a direct continuation of our previous
construction of multinode Shepard operators on the sphere
\cite{DellAccioSphere}. In that setting, the local
approximation space is the restriction to $\mathbb S^2$ of ambient
trivariate polynomials of total degree at most $d$,
\[
  \mathcal H_d(\mathbb S^2)
  =
  \{p|_{\mathbb S^2}:p\in\mathbb P_d(\mathbb R^3)\}.
\]
This restriction space is represented through the decomposition into
spherical harmonics. Minimal unisolvent stencils are extracted from
local candidate sets by partial row pivoting in a $PA=LU$
factorization, yielding Leja-type interpolation stencils. The
corresponding local interpolants are then blended through multinode
Shepard functions based on the spherical geodesic distance. The
resulting operator is interpolatory, reproduces
$\mathcal H_d(\mathbb S^2)$, and achieves approximation order $d+1$
under suitable geometric assumptions.

The extension from the sphere to the torus retains this interpolatory
multinode framework, but it is not formal. The underlying principle is
again to use restrictions of ambient polynomials,
\[
  \mathcal H_d(\mathbb T)
  =
  \{p|_{\mathbb T}:p\in\mathbb P_d(\mathbb R^3)\},
\]
and to construct minimal local interpolants on unisolvent stencils of
scattered nodes. The algebraic representation of this space, however,
is substantially different. On the sphere, the quadratic relation
$x^2+y^2+z^2-1=0$ is naturally handled through harmonic
decomposition. The torus is instead defined by a quartic polynomial
$F$, and distinct ambient polynomials may have the same restriction
whenever they differ by a multiple of $F$. Thus,
\[
  \mathcal H_d(\mathbb T)
  \simeq
  \frac{\mathbb P_d(\mathbb R^3)}
       {\mathbb P_d(\mathbb R^3)\cap\langle F\rangle}.
\]
We use Gröbner bases and normal forms to construct an explicit
non-redundant monomial basis of this quotient space. In this sense, the
Gröbner-reduced toroidal basis plays the role that the spherical
harmonic representation played in the spherical construction.

A second difficulty concerns the metric entering the multinode
weights. On $\mathbb S^2$, the geodesic distance has the explicit
formula
\[
  d_{\mathbb S^2}(x,y)=\arccos(x^Ty).
\]
No equally convenient expression is available for geodesic distances
on the torus. The toroidal parametrization instead induces a flat
periodic distance on the parameter domain, while the embedded surface
inherits the Euclidean distance from $\mathbb R^3$. We prove that these
two distances are locally equivalent. This makes it possible to use
the ambient Euclidean distance in a genuinely local construction,
provided that the supports are sufficiently small.

The third development with respect to the spherical method is the
introduction of compact support. The multinode Shepard operator on the
sphere is global: every interpolation stencil contributes, with
different intensity, at every evaluation point. Here the inverse
distance factors are truncated by stencil-dependent support radii, and
only the locally active stencils enter the approximation. The support
radii are chosen in terms of the stencil diameters and an upper bound
for the fill distance, which guarantees coverage of the torus while
preserving interpolation and polynomial reproduction. The resulting
compactly supported multinode Shepard operator is therefore both
interpolatory and genuinely local.

Recent work has established a general theory of stable local
polynomial reproductions on Riemannian and algebraic manifolds and has
applied it to coordinate-free moving least squares approximation
\cite{hangelbroek2026}. That framework provides existence,
locality, stability, and regularity results for reproducing shape
functions constructed from generally overdetermined local point sets.
The present construction is complementary in nature. We consider an
interpolatory multinode Shepard operator built from minimal square
stencils, whose cardinality is exactly
$\dim\mathcal H_d(\mathbb T)$. The polynomial redundancy is removed
algebraically before the numerical computation, rather than detected
through a numerical rank criterion, and the local interpolants are
combined by compactly supported multinode products of distances.
Thus, the distinctive features of the present method are exact
algebraic reduction, minimal interpolation stencils, multinode
Shepard blending, and nodal interpolation.

The main contributions of this work are the following. First, we
construct the restriction spaces $\mathcal H_d(\mathbb T)$ by means of
the quotient defined by the quartic torus equation and obtain explicit
Gröbner-reduced bases and dimensions. Second, we develop a compactly
supported multinode Shepard operator using minimal unisolvent stencils
selected by a Leja-type $PA=LU$ procedure. Third, we justify the use of
the ambient Euclidean distance through its local equivalence with the
periodic parameter distance and prove that the chosen support radii
provide complete coverage. Fourth, using a smooth normal extension to
a tubular neighbourhood and a local ambient Taylor argument, we
derive the estimate
\[
  \|f-\widetilde       {\mathcal{M}}_{\mu,h}f\|_{L^\infty(\mathbb T)}
  \le
  C\,\Lambda_h\,\rho_h^{d+1}
  \|f\|_{C^{d+1}(\mathbb T)},
\]
which yields order $d+1$ with respect to the fill distance under
uniform locality and stability assumptions. Finally, we provide a
numerical investigation of reproduction, degree enrichment,
conditioning, Lebesgue factors, and sensitivity to data
perturbations, and we apply the method to Computational Fluid Dynamics
data mapped onto the torus, including the reconstruction of a tangent
velocity field.

The paper is organized as follows. Section~2 develops the
Gröbner-based construction of polynomial restriction spaces on the
torus. Section~3 introduces the compactly supported multinode Shepard
operator, discusses the relevant distances, and establishes the error
and convergence estimates. Section~4 presents the analytical
experiments and the numerical stability diagnostics. Section~5
describes the application to CFD data and the tangent-field
reconstruction. Section~6 summarizes the stencil-selection and
implementation procedures, and the final section contains concluding
remarks.

\section{Gr\"obner-based polynomial spaces on the torus}
\label{sec:prelim}

The construction of suitable local polynomial spaces on the torus requires particular attention. Indeed, starting from degree $4$, the algebraic relation defining the
surface introduces dependencies among trivariate polynomials: distinct
polynomials of total degree at most $d$ may determine the same function
when restricted to the torus. Consequently, the direct use of standard polynomial bases leads to redundant degrees of freedom and potentially singular interpolation
systems. To obtain a representation adapted to the surface, we describe
polynomial restrictions through the quotient by the defining ideal of the
torus. Gr\"obner bases and normal forms then provide an explicit reduced polynomial basis~\cite{Cox}.
More precisely, for $R>r>0$, let us consider the standard torus embedded in $\mathbb{R}^3$,
\begin{equation*}
\TT
=
\left\{
(x,y,z)\in\RR^3:
\left(\sqrt{x^2+y^2}-R\right)^2+z^2=r^2
\right\}.
\end{equation*}
A smooth parametrization of $\TT$ is
\begin{equation}
\label{eq:param}
\Phi(\theta,\phi)
=
\bigl(
(R+r\cos\theta)\cos\phi,\,
(R+r\cos\theta)\sin\phi,\,
r\sin\theta
\bigr),
\qquad
(\theta,\phi)\in[0,2\pi)^2.
\end{equation}
The periodicity relations
\[
(\theta,\phi)\sim(\theta+2\pi,\phi),
\qquad
(\theta,\phi)\sim(\theta,\phi+2\pi)
\]
identify the torus with the quotient space
$\RR^2/(2\pi\ZZ)^2$.
Equivalently, the torus can be represented as the real algebraic surface
\[
\TT
=
\left\{
(x,y,z)\in\RR^3:F(x,y,z)=0
\right\}, 
\]
where
\begin{equation}
\label{eq:torus_F}
F(x,y,z)
=
\left(x^2+y^2+z^2+R^2-r^2\right)^2
-
4R^2(x^2+y^2).
\end{equation}

Let $\PP_d(\RR^3)$ denote the space of trivariate polynomials of
total degree at most $d$. The polynomial space relevant to interpolation on $\TT$ is the restriction space
\begin{equation*}
\HH_d(\TT)
=
\left\{
p|_{\TT}:p\in\PP_d(\RR^3)
\right\}.
\end{equation*}
The representation of an element of $\HH_d(\TT)$ is not unique. In fact, if two polynomials differ by a multiple of $F$,
then they have the same restriction to $\TT$. Moreover, the polynomial
$F$ is irreducible and its real zero set contains nonsingular points;
therefore the vanishing ideal of the real torus is generated by $F$~\cite{BochnakCosteRoy}. Consequently,
\begin{equation*}
\HH_d(\TT)
\simeq
\frac{\PP_d(\RR^3)}
{\PP_d(\RR^3)\cap\langle F\rangle}.
\end{equation*}

To make the quotient representation explicit, we briefly recall the algebraic notion underlying the  construction. Let
$S=\RR[x,y,z]$ denote the polynomial ring in the variables $x,y,z$ with
real coefficients. We equip $S$ with the lexicographic monomial order
induced by
\[
x\succ y\succ z.
\]
For every nonzero polynomial $p\in S$, we denote by
$\operatorname{LM}(p)$ its leading monomial with respect to this order. A finite set
\[
G=\{g_1,\ldots,g_s\}\subset I
\]
is called a Gr\"obner basis of an ideal $I\subset S$ with respect to the
chosen monomial order if
\begin{equation*}
\left\langle
\operatorname{LM}(g_1),\ldots,
\operatorname{LM}(g_s)
\right\rangle
=
\left\langle
\operatorname{LM}(p):
p\in I\setminus\{0\}
\right\rangle .
\end{equation*}
Equivalently, the leading monomial of every nonzero polynomial in $I$ is
divisible by the leading monomial of at least one element of $G$. This
property makes reduction modulo $I$ constructive: every residue class in
$S/I$ admits a unique normal-form representative expressed as a linear
combination of monomials not divisible by any
$\operatorname{LM}(g_j)$; see \cite{Cox}.

With respect to the lexicographic order fixed above, the leading monomial of the defining polynomial $F$ in \eqref{eq:torus_F} is
\[
\operatorname{LM}(F)=x^4.
\]
Since $\langle F\rangle$ is a principal ideal, the singleton $\{F\}$ is a
Gr\"obner basis of $\langle F\rangle$. It follows that every polynomial
has a unique normal form modulo $F$, expressed as a linear combination of
monomials not divisible by $x^4$.

\begin{proposition}
\label{prop:torus_basis}
For every $d\geq0$, the restrictions to $\TT$ of the monomials
\begin{equation}
\label{eq:basis_torus}
\mathcal B_d
=
\left\{
x^iy^jz^k:
0\leq i\leq3,\quad i+j+k\leq d
\right\}
\end{equation}
form a basis of $\HH_d(\TT)$. Consequently, if $m_d:=\dim\HH_d(\TT)$,
we have
\begin{equation}
\label{eq:dim_torus}
m_d
=
\begin{cases}
\displaystyle \binom{d+3}{3},
& 0\leq d\leq3,\\[2ex]
\displaystyle
\binom{d+3}{3}-\binom{d-1}{3}
=
2(d^2+1),
& d\geq4.
\end{cases}
\end{equation}
\end{proposition}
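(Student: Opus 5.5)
The plan is to work entirely on the algebraic side, via the isomorphism $\HH_d(\TT)\simeq \PP_d(\RR^3)/(\PP_d(\RR^3)\cap\langle F\rangle)$ established above, which relies on the fact that $F$ generates the vanishing ideal of $\TT$. Two things must be shown: the restrictions of the monomials in $\mathcal B_d$ span $\HH_d(\TT)$, and they are linearly independent on $\TT$. The dimension formula then follows by counting $\mathcal B_d$.

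\emph{Spanning.} Take $p\in\PP_d(\RR^3)$ and divide it by $F$, treated as a polynomial in $x$ with coefficients in $\RR[y,z]$. This is exactly reduction modulo the Gr\"obner basis $\{F\}$. Since $F=x^4+(\text{terms of lower $x$-degree})$ is monic in $x$, division gives $p=qF+\rho$ with $\deg_x\rho\le 3$.

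The main point to check is that the remainder stays in total degree $\le d$. Here I use that $F$ is not only lex-leading in $x^4$: its homogeneous top-degree part is $(x^2+y^2+z^2)^2$. At each reduction step, a term $c\,x^ay^bz^c$ with $a\ge4$ and $a+b+c\le d$ is replaced by
\[
c\,x^ay^bz^c - c\,x^{a-4}y^bz^cF .
\]
Every monomial of $x^{a-4}y^bz^cF$ has total degree $\le a+b+c\le d$, so the total degree never increases. Since the lex leading term strictly decreases at each step, the process terminates. The result is $p\equiv\rho$ modulo $F$ with $\rho\in\operatorname{span}\mathcal B_d$, and hence $p|_{\TT}=\rho|_{\TT}$.

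\emph{Independence.} Suppose $\rho\in\operatorname{span}\mathcal B_d$ and $\rho|_{\TT}=0$. Then $\rho\in\langle F\rangle$, so $\rho=qF$ for some $q\in S$. If $q\neq0$, then $\operatorname{LM}(\rho)=\operatorname{LM}(q)\,x^4$ is divisible by $x^4$. This contradicts $\deg_x\rho\le3$, so $\rho=0$. Equivalently, this is the uniqueness of normal forms recalled before the statement.

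\emph{Counting.} For $d\le3$ the constraint $i\le3$ is automatic, so $m_d=\binom{d+3}{3}$. For $d\ge4$, the excluded monomials $x^iy^jz^k$ with $i\ge4$ and $i+j+k\le d$ correspond, via $i\mapsto i-4$, bijectively to all monomials of degree $\le d-4$. There are $\binom{d-1}{3}$ of these. A short polynomial expansion then gives
\[
\binom{d+3}{3}-\binom{d-1}{3}=\frac{(d+3)(d+2)(d+1)-(d-1)(d-2)(d-3)}{6}=\frac{12d^2+12}{6}=2(d^2+1).
\]
As a cross-check, one can sum $\binom{d-i+2}{2}$ over $i=0,\dots,3$.

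The step I expect to be the main obstacle is the degree control in the spanning argument. Lex reduction in general does not respect total degree. It works here only because the top homogeneous part of $F$ has degree $4=\deg_x F$, so every multiple $x^{a-4}y^bz^cF$ has total degree at most $a+b+c$.
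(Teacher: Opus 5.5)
Your proof is correct and follows essentially the same route as the paper: reduction modulo the Gr\"obner basis $\{F\}$ gives spanning, and the fact that every nonzero multiple of $F$ has leading monomial divisible by $x^4$ gives independence, both resting on the vanishing ideal of $\TT$ being $\langle F\rangle$. Your version is slightly more careful in two respects: you justify that the remainder stays in total degree at most $d$ (because $\deg F=\deg\operatorname{LM}(F)=4$), which the paper simply asserts, and you obtain $m_d$ by counting $\mathcal B_d$ directly rather than via $\PP_d(\RR^3)=F\PP_{d-4}(\RR^3)\oplus\mathcal R_d$ and the injectivity of $g\mapsto Fg$.
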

\begin{proof}
It is well known that
\[
\dim \PP_d(\RR^3)=\binom{d+3}{3}.
\]
Let
\[
\mathcal R_d:=\operatorname{span}\mathcal B_d
\subset \PP_d(\RR^3).
\]
Since $\operatorname{LM}(F)=x^4$, the multivariate division algorithm with
respect to the Gr\"obner basis $\{F\}$ implies that every polynomial
$p\in\PP_d(\RR^3)$ admits a unique decomposition
\[
p=Fg+r,
\]
where $g\in\PP_{d-4}(\RR^3)$ when $d\geq4$, and
$r\in\mathcal R_d$ contains no monomial divisible by $x^4$. For $d<4$,
the quotient term is absent and $r=p$.

The decomposition is direct. Indeed, if
\[
p\in F\PP_{d-4}(\RR^3)\cap\mathcal R_d,
\]
then $p=Fg$ for some $g\in\PP_{d-4}(\RR^3)$, while $p$ is also a remainder
with respect to division by $F$. Since $\{F\}$ is a Gr\"obner basis, every element of $\langle F\rangle$ has zero normal form modulo $F$. It follows that
$p=0$. Hence, for $d\geq4$,
\[
\PP_d(\RR^3)
=
F\PP_{d-4}(\RR^3)\oplus\mathcal R_d.
\]

Because $F$ vanishes identically on $\TT$, the restriction of every
$p\in\PP_d(\RR^3)$ coincides with the restriction of its unique remainder
$r\in\mathcal R_d$. Therefore, the restrictions to $\TT$ of the monomials
in $\mathcal B_d$ span $\HH_d(\TT)$. Moreover, if an element of
$\mathcal R_d$ represents the zero class in
\[
\frac{\PP_d(\RR^3)}
{\PP_d(\RR^3)\cap\langle F\rangle},
\]
then it belongs to
$F\PP_{d-4}(\RR^3)\cap\mathcal R_d$ and is therefore zero. Thus the
restrictions of the monomials in $\mathcal B_d$ are linearly independent
and form a basis of $\HH_d(\TT)$.

For $0\leq d\leq3$, no nonzero multiple of the quartic polynomial $F$
belongs to $\PP_d(\RR^3)$. Hence
\[
m_d=\dim\HH_d(\TT)=\dim\PP_d(\RR^3)
=\binom{d+3}{3}.
\]

For $d\geq4$, consider the linear map
\[
M_F:\PP_{d-4}(\RR^3)\longrightarrow\PP_d(\RR^3),
\qquad
g\longmapsto Fg.
\]
Since $S=\RR[x,y,z]$ is an integral domain and $F\neq0$, the map $M_F$ is
injective. Therefore,
\[
\dim\bigl(F\PP_{d-4}(\RR^3)\bigr)
=
\dim\PP_{d-4}(\RR^3)
=
\binom{d-1}{3}.
\]
Taking dimensions in the direct-sum decomposition gives
\[
m_d
=
\dim\PP_d(\RR^3)-\dim\PP_{d-4}(\RR^3)
=
\binom{d+3}{3}-\binom{d-1}{3}.
\]
A direct simplification yields
\[
\binom{d+3}{3}-\binom{d-1}{3}
=
2(d^2+1),
\]
which proves \eqref{eq:dim_torus}. $\blacksquare$
\end{proof}

For instance, the dimensions corresponding to the degrees considered in
the numerical experiments are
\[
m_1=4,\quad m_2=10,\quad m_3=20,\quad
m_4=34,\quad m_5=52,\quad m_6=74.
\]
The construction above can be viewed within the general framework of
Buchberger's algorithm, which provides a constructive procedure for
computing a Gr\"obner basis from a finite set of generators of a
polynomial ideal. Starting from the given generators, the algorithm forms
their $S$-polynomials, reduces them with respect to the current generating
set, and adjoins every nonzero remainder. The procedure terminates when all the relevant $S$-polynomials reduce to zero; by Buchberger's
criterion, the resulting set is then a Gr\"obner basis. In the present
case, the ideal $\langle F\rangle$ is principal, and hence the singleton
$\{F\}$ is already a Gr\"obner basis, so that no nontrivial iteration of
the algorithm is required.
For completeness, Appendix~\ref{app:buchberger} reports the Wolfram Mathematica routine used to generate the reduced monomial basis $\mathcal{B}_d$.

We are now ready to formulate the interpolation problem on the torus $  \mathcal{T}$. Let $\Xi =
\{\bs{x}_1,\ldots,\bs{x}_{m_d}\}
\subset\TT$ be a set of pairwise distinct points, and let $\mathcal B_d
=
\{\beta_1,\ldots,\beta_{m_d}\}$ be a fixed ordering of the basis in \eqref{eq:basis_torus}. Given data
$f_i=f(\bs{x}_i)$, $i=1,\ldots,m_d$, we seek an interpolant
\begin{equation*}
P[f](\bs{x})
=
\sum_{k=1}^{m_d}c_k\beta_k(\bs{x})
\end{equation*}
satisfying
\begin{equation*}
P[f](\bs{x}_i)
=
f_i,
\qquad
i=1,\ldots,m_d.
\end{equation*}
The interpolation conditions lead to the Vandermonde
system
\begin{equation*}
V\bs{c}=\bs{f},
\qquad
V_{ik}
=
\beta_k(\bs{x}_i),
\end{equation*}
where
\[
\bs{c}
=
(c_1,\ldots,c_{m_d})^T,
\qquad
\bs{f}
=
(f_1,\ldots,f_{m_d})^T.
\]
The set $\Xi$ is said to be unisolvent for $\HH_d(\TT)$ if the matrix
$V$ is nonsingular. In this case, the interpolation problem admits a
unique solution for every data vector $\bs{f}$.

\section{The multinode Shepard framework}
\label{sec:multinode-framework}

The multinode Shepard construction combines local polynomial interpolants
through a partition of unity. More precisely, let
$X=\{\bs{x}_1,\ldots,\bs{x}_n\}\subset\TT$ be a set of pairwise distinct
interpolation nodes, and let $f_i=f(\bs{x}_i)$, $i=1,\ldots,n$, be the
corresponding data values. We consider a family
$\Sigma=\{\sigma_1,\ldots,\sigma_L\}$ of local stencils of $X$ such that
\[
X=\bigcup_{j=1}^{L}\sigma_j.
\]
Each stencil has the form
\[
\sigma_j
=
\{\bs{x}_{j_1},\ldots,\bs{x}_{j_{m_d}}\}
\subset X
\]
and is assumed to be unisolvent for interpolation on $\HH_d(\TT)$.
Therefore, for every $j=1,\ldots,L$, there exists a unique local
interpolant $P_j[f]\in\HH_d(\TT)$ satisfying
\[
P_j[f](\bs{x}_{j_\ell})
=
f(\bs{x}_{j_\ell}),
\qquad
\ell=1,\ldots,m_d.
\]
The corresponding multinode Shepard approximation is defined by
\[
\mathcal M_{\mu}[f](\bs{x})
=
\begin{cases}
\displaystyle
\sum_{j=1}^{L}
W_{\mu,j}(\bs{x})P_j[f](\bs{x}),
& \bs{x}\in\TT\setminus X,\\[3ex]
f(\bs{x}_i),
& \bs{x}=\bs{x}_i,\quad i=1,\ldots,n,
\end{cases}
\]
where the nonnegative weight functions satisfy
\[
\sum_{j=1}^{L}W_{\mu,j}(\bs{x})=1,
\qquad
\bs{x}\in\TT\setminus X.
\]
The algebraic construction developed in the previous section determines
the local polynomial interpolants $P_j[f]$. The definition of the weights,
on the other hand, requires a suitable notion of distance on the torus.

More precisely, for each unisolvent stencil $\sigma_j$, we introduce the
multinode distance product
\[
D_j(\bs{x})
=
\prod_{\ell=1}^{m_d}
d(\bs{x},\bs{x}_{j_\ell}),
\]
where $d(\cdot,\cdot)$ is a distance adapted to the underlying domain.
The associated multinode Shepard functions, for $\bs{x}\notin X$, are
defined as
\begin{equation}
\label{eq:Wmuj}
W_{\mu,j}(\bs{x})
=
\frac{D_j(\bs{x})^{-\mu}}
{\displaystyle\sum_{k=1}^{L}D_k(\bs{x})^{-\mu}},
\qquad
\mu>0.
\end{equation}
Thus, the choice of the distance is an essential part of the construction,
since it determines the influence of each local interpolation stencil.

In Euclidean domains, the natural choice is the Euclidean distance
\cite{dellaccio2019rate}. On the sphere, it is natural to replace it with
the geodesic distance, which is explicitly available and reflects the
intrinsic geometry of the surface \cite{DellAccioSphere}. On the torus,
the situation is more delicate. The periodic parameter domain is the
quotient
\[
\mathbb T^2=\RR^2/(2\pi\ZZ)^2,
\]
obtained by identifying parameter pairs that differ by integer multiples
of $2\pi$ in either coordinate. The flat periodic distance is the quotient
distance induced by the Euclidean norm on $\RR^2$; equivalently, it
measures the shortest Euclidean distance among all periodic
representatives of two parameter points. Here, the adjective ``flat''
refers to the Euclidean geometry of the parameter domain and not to the
geometry of the torus embedded in $\RR^3$.
The interpolation nodes, however, are represented on the embedded surface
$\TT\subset\RR^3$, where proximity can be measured using the Euclidean
distance inherited from the ambient space. The periodic parameter
distance and the ambient Euclidean distance may behave differently at a
global scale.

To overcome this difficulty, we adopt a local construction based on
compactly supported multinode weights. The following section shows that,
in a sufficiently small neighborhood of every point of the torus, the
periodic distance in the parameter domain and the Euclidean distance
between the corresponding embedded points are equivalent. This result
provides the theoretical justification for using the Euclidean distance
in the compactly supported weights, provided that their supports are
sufficiently small.

\subsection{Local comparison of distances}
\label{sec:toruspoly}
 By the periodicity of the
parametrization in \eqref{eq:param}, the map $\Phi$ induces a
well-defined smooth parametrization
\[
\Phi:\mathbb T^2\longrightarrow\TT,
\]
which we denote by the same symbol.
The flat periodic distance described above is defined by
\begin{equation*}
d_{\mathrm{per}}(p,q)
=
\min_{k\in(2\pi\ZZ)^2}
\norm{\widetilde p-\widetilde q+k}_2,
\qquad
p,q\in\mathbb T^2,
\end{equation*}
where $\widetilde p,\widetilde q\in\RR^2$ are any representatives of the equivalence classes $p$ and $q$, respectively. On the embedded torus, we consider instead
the Euclidean distance inherited from the ambient space:
\begin{equation*}
d_E(P,Q)
=
\norm{P-Q}_2,
\qquad
P,Q\in\TT.
\end{equation*}
The two distances can be compared through the parametrization $\Phi$.
Although they may assign substantially different values to points that
are far apart in the parameter domain, they are locally equivalent.

\begin{proposition}
\label{prop:local-equivalence}
For every $p\in\mathbb T^2$, there exist a neighborhood
$U\subset\mathbb T^2$ of $p$ and constants $c_1,c_2>0$ such that
\begin{equation*}
c_1d_{\mathrm{per}}(p,q)
\leq
d_E\bigl(\Phi(p),\Phi(q)\bigr)
\leq
c_2d_{\mathrm{per}}(p,q),
\qquad
q\in U.
\end{equation*}
\end{proposition}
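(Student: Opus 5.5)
The plan is to lift the problem to the universal cover $\RR^2$ and use that $\Phi$ is a smooth immersion. Fix a representative $\widetilde p\in\RR^2$ of $p$ and take $U$ to be the image in $\mathbb T^2$ of the open Euclidean ball $B=B(\widetilde p,\delta)$, with $\delta<\pi/2$ to be fixed below. Since $\delta<\pi$, for every $q\in U$ the unique representative $\widetilde q\in B$ realizes the minimum in the definition of $d_{\mathrm{per}}$: any other representative $\widetilde q+k$ with $k\neq0$ lies at distance at least $2\pi-\delta>\delta$ from $\widetilde p$. Hence $d_{\mathrm{per}}(p,q)=\norm{\widetilde p-\widetilde q}_2$ on $U$, and the statement reduces to a two-sided Lipschitz estimate for the smooth map $\Phi:\RR^2\to\RR^3$ on the convex set $B$.

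The upper bound is immediate from the mean value inequality along the segment $[\widetilde p,\widetilde q]\subset B$:
\[
\norm{\Phi(\widetilde p)-\Phi(\widetilde q)}_2\le \sup_{B}\norm{D\Phi}_2\,\norm{\widetilde p-\widetilde q}_2 .
\]
A direct computation gives $\partial_\theta\Phi$ and $\partial_\phi\Phi$ orthogonal with norms $r$ and $R+r\cos\theta\le R+r$. So we may take $c_2=R+r$, which is actually uniform in $p$.

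The lower bound is the main step. The metric tensor is $\operatorname{diag}(r^2,(R+r\cos\theta)^2)$, so $D\Phi(\widetilde p)$ is injective with smallest singular value at least $\lambda:=\min\{r,R-r\}>0$, using $R>r$. Write
\[
\Phi(\widetilde q)-\Phi(\widetilde p)=D\Phi(\widetilde p)(\widetilde q-\widetilde p)+E,
\]
where by Taylor's theorem $\norm{E}_2\le \tfrac12 M\norm{\widetilde q-\widetilde p}_2^2$, with $M=\sup\norm{D^2\Phi}$. Here $M$ is finite and independent of the point, because $\Phi$ is trigonometric with bounded second derivatives. Then
\[
\norm{\Phi(\widetilde q)-\Phi(\widetilde p)}_2\ge(\lambda-\tfrac12 M\delta)\norm{\widetilde q-\widetilde p}_2 .
\]
Choosing $\delta\le\min\{\lambda/M,\pi/2\}$ yields $c_1=\lambda/2$.

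The only delicate point is ensuring that $d_{\mathrm{per}}$ coincides with the lifted Euclidean distance on $U$, which is handled by the choice $\delta<\pi$ in the first step. Global injectivity of $\Phi$ on $\mathbb T^2$ is not needed, since the Taylor argument already gives local injectivity with a quantitative constant. All constants turn out to be independent of $p$, which is the form needed later for the compact support construction.
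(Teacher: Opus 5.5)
Your proof is correct. The lifting step (a ball of radius less than $\pi$ on which $d_{\mathrm{per}}(p,q)=\norm{\widetilde p-\widetilde q}_2$) and the mean value argument for the upper bound are essentially the same as in the paper. The genuine difference is in the lower bound.

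The paper argues qualitatively. Since $\Phi$ is locally an embedding, $\Phi^{-1}:\Phi(V)\to V$ is smooth and hence locally Lipschitz, which yields an unspecified constant $C$ depending on $p$. You instead give a quantitative inverse-function estimate. You combine the explicit smallest singular value $\min\{r,R-r\}$ of $D\Phi$, read off from the diagonal metric tensor, with a global bound $M$ on $D^2\Phi$ in the integral Taylor remainder.

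The paper's route needs no computation and works verbatim for any regular parametrization. However, it tacitly uses that a smooth map defined only on the surface patch $\Phi(V)$ is Lipschitz for the ambient Euclidean distance. That is essentially the lower bound itself, unless one first extends $\Phi^{-1}$ smoothly to an open subset of $\RR^3$.

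Your route costs a short torus-specific computation. In exchange it gives explicit constants $c_1=\min\{r,R-r\}/2$, $c_2=R+r$ and a neighbourhood radius $\min\{\min\{r,R-r\}/M,\pi/2\}$, all independent of $p$. This uniform version is what is actually needed if the proposition is to justify a single support scale for the compactly supported weights.

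Incidentally, the constant $a$ in the left-hand inequality of the paper's two-sided bound on $\norm{D\Phi(\xi)v}_2$ is stated there but never used. Your Taylor argument is exactly how it would enter a direct proof.
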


\begin{proof}
Let $p\in\mathbb T^2$, and let $\widetilde p\in\RR^2$ be a
representative of $p$. Since the quotient projection identifies points
that differ by elements of $(2\pi\ZZ)^2$, we can choose a sufficiently
small neighborhood $U$ of $p$ such that every $q\in U$ admits a unique
representative $\widetilde q$ close to $\widetilde p$. In this
neighborhood, the periodic distance is given by
\[
d_{\mathrm{per}}(p,q)
=
\norm{\widetilde p-\widetilde q}_2.
\]
Since the parametrization $\Phi$ in~\eqref{eq:param} is smooth and regular, its differential $D\Phi$ has rank $2$ at every point. Therefore, after possibly restricting to a smaller
neighborhood $V$ of $\widetilde p$, there exist constants $a,b>0$ such
that
\begin{equation}\label{upperbound}
a\norm{v}_2
\leq
\norm{D\Phi(\xi)v}_2
\leq
b\norm{v}_2,
\qquad
\forall\,\xi\in V,\quad
\forall\,v\in\RR^2.
\end{equation}
For $\widetilde q\in V$, we have
\[
\Phi(\widetilde q)-\Phi(\widetilde p)
=
\int_0^1
D\Phi\bigl(
\widetilde p+t(\widetilde q-\widetilde p)
\bigr)
(\widetilde q-\widetilde p)\,dt.
\]
Using~\eqref{upperbound}, we obtain
\[
d_E\bigl(\Phi(p),\Phi(q)\bigr)
=
\norm{\Phi(\widetilde q)-\Phi(\widetilde p)}_2
\leq
b\norm{\widetilde q-\widetilde p}_2
=
b\,d_{\mathrm{per}}(p,q).
\]
Moreover, from the smoothness and regularity of $\Phi$, it follows that $\Phi$ is locally
an embedding. Thus, after possibly reducing $V$, the inverse map
\[
\Phi^{-1}:\Phi(V)\longrightarrow V
\]
is smooth. Hence, $\Phi^{-1}$ is locally Lipschitz: there exists a
constant $C>0$ such that
\[
\norm{\widetilde q-\widetilde p}_2
\leq
C\norm{\Phi(\widetilde q)-\Phi(\widetilde p)}_2.
\]
Therefore,
\[
d_{\mathrm{per}}(p,q)
\leq
C\,d_E\bigl(\Phi(p),\Phi(q)\bigr),
\]
or, equivalently,
\[
\frac{1}{C}d_{\mathrm{per}}(p,q)
\leq
d_E\bigl(\Phi(p),\Phi(q)\bigr).
\]
Setting $c_1=1/C$ and $c_2=b$, we obtain
\[
c_1d_{\mathrm{per}}(p,q)
\leq
d_E\bigl(\Phi(p),\Phi(q)\bigr)
\leq
c_2d_{\mathrm{per}}(p,q),
\qquad
\forall\,q\in U.
\]
This proves the local equivalence of the two distances. $\blacksquare$
\end{proof}
Note that the local equivalence does not mean that the two distances assign similar numerical values to arbitrary pairs of points. For example, consider $p=(0,0)$, $q=(\pi,0)$. Their periodic distance is
\[
d_{\mathrm{per}}(p,q)=\pi,
\]
whereas $\Phi(p)=(R+r,0,0)$, $\Phi(q)=(R-r,0,0)$, and hence
\[
d_E\bigl(\Phi(p),\Phi(q)\bigr)=2r.
\]
Thus, points separated by half a turn in the periodic parameter domain
may be considerably closer when viewed in the ambient space. This is the shortcut effect produced by the embedding of the torus in $\RR^3$.

\subsection{Compactly supported multinode Shepard operator}
\label{sec:csms}

The local equivalence established in
Proposition~\ref{prop:local-equivalence} motivates the use of the
Euclidean distance in a localized version of the multinode Shepard
construction. More precisely, the global inverse-distance
weights~\eqref{eq:Wmuj} are replaced by compactly supported weights, so
that only interpolation stencils lying sufficiently close to the
evaluation point contribute to the approximation.

In what follows, the weight functions are evaluated at points
$\bs{x}\in\TT\setminus X$, unless otherwise stated. For each stencil $\sigma_j$, let $R_j>0$ be a support radius and define
the unnormalized compact multinode function
\begin{equation}
\label{eq:compact-unnormalized-weight}
\omega_{\mu,j}(\bs{x})
=
\prod_{\ell=1}^{m_d}
\left(
\frac{1}{d_E(\bs{x},\bs{x}_{j_\ell})}
-
\frac{1}{R_j}
\right)_+^{\mu},
\qquad
\mu>0,
\end{equation}
where $(t)_+:=\max\{t,0\}$ is the positive part function.
The function $\omega_{\mu,j}(\bs{x})$ is positive if and only if
\[
d_E(\bs{x},\bs{x}_{j_\ell})<R_j,
\qquad
\ell=1,\ldots,m_d.
\]
Accordingly, the support of the unnormalized compact multinode
function~\eqref{eq:compact-unnormalized-weight} is
\begin{equation*}
C_j
=
\bigcap_{\ell=1}^{m_d}
\left\{
\bs{\xi}\in\TT:
d_E(\bs{\xi},\bs{x}_{j_\ell})\leq R_j
\right\}.
\end{equation*}
Notice that $C_j$ is independent of the parameter $\mu$. For each evaluation point $\bs{x}\in\TT$, we introduce the active index
set
\begin{equation*}
J_{\bs{x}}
=
\left\{
j\in\{1,\ldots,L\}:
\bs{x}\in\overset{\circ}{C}_j
\right\},
\end{equation*}
where $\overset{\circ}{C}_j$ denotes the interior of $C_j$ in $\TT$.
The practical choice of the support radii introduced below guarantees that $J_{\bs{x}}\neq\emptyset$ for every $\bs{x}\in\TT$; see Lemma~\ref{lem:coverage}. The compact
support multinode Shepard functions are then defined by
\begin{equation}
\label{eq:compact-weights}
\widetilde W_{\mu,j}(\bs{x})
=
\begin{cases}
\displaystyle
\frac{\omega_{\mu,j}(\bs{x})}
{\displaystyle\sum_{k\in J_{\bs{x}}}
\omega_{\mu,k}(\bs{x})},
& j\in J_{\bs{x}},\\[3ex]
0,
& j\notin J_{\bs{x}},
\end{cases}
\qquad
j=1,\ldots,L.
\end{equation}
As the multinode Shepard functions~\eqref{eq:Wmuj}, they are
nonnegative and satisfy the partition of unity property, i.e.,
\begin{equation*}
\widetilde W_{\mu,j}(\bs{x})\geq0, \qquad
\sum_{j=1}^{L}
\widetilde W_{\mu,j}(\bs{x})
=
\sum_{j\in J_{\bs{x}}}
\widetilde W_{\mu,j}(\bs{x})
=
1.
\end{equation*}
The compactly supported multinode Shepard operator on the
torus is defined by
\begin{equation}
\label{eq:csms-operator}
\widetilde{\mathcal M}_{\mu}[f](\bs{x})
=
\begin{cases}
\displaystyle
\sum_{j\in J_{\bs{x}}}
\widetilde W_{\mu,j}(\bs{x})P_j[f](\bs{x}),
& \bs{x}\in\TT\setminus X,\\[3ex]
f(\bs{x}_i),
& \bs{x}=\bs{x}_i,\quad i=1,\ldots,n.
\end{cases}
\end{equation}
Since the interpolation nodes are pairwise distinct, the operator~\eqref{eq:csms-operator} is well defined and directly enforces the
interpolation conditions. The radius $R_j$ must be large enough so that
$\overset{\circ}{C}_j$ contains all nodes of $\sigma_j$. A practical
choice is
\begin{equation}
\label{eq:support-radius}
R_j
=
\operatorname{diam}_E(\sigma_j)+H,
\qquad
\operatorname{diam}_E(\sigma_j)
=
\max_{\bs{x},\bs{y}\in\sigma_j}
d_E(\bs{x},\bs{y}),
\end{equation}
where $H>0$ is a strictly upper bound of the fill distance
\[
h_{X,\TT}
=
\sup_{\bs{x}\in\TT}
\min_{\bs{x}_i\in X}
d_E(\bs{x},\bs{x}_i).
\]

\begin{lemma}
\label{lem:coverage}
Assume that the family $\Sigma=\{\sigma_1,\ldots,\sigma_L\}$ covers the node set $X$ and that the radii are chosen according to~\eqref{eq:support-radius}, with $H>h_{X,\TT}$. Then
\[
J_{\bs{x}}\neq\emptyset,
\qquad
\bs{x}\in\TT.
\]
Consequently, the denominator in~\eqref{eq:compact-weights} is strictly positive at every evaluation point in $\TT\setminus X$.
\end{lemma}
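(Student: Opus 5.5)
Fix $\bs{x}\in\TT$. By definition of the fill distance and $H>h_{X,\TT}$, there is a node $\bs{x}_i\in X$ with
\[
d_E(\bs{x},\bs{x}_i)\le h_{X,\TT}<H .
\]
Since the stencils cover $X$, we choose an index $j$ with $\bs{x}_i\in\sigma_j$. We then show that $\bs{x}$ lies in the interior of $C_j$, so that $j\in J_{\bs{x}}$.

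For every node $\bs{x}_{j_\ell}\in\sigma_j$ the triangle inequality for $d_E$ gives
\[
d_E(\bs{x},\bs{x}_{j_\ell})
\le d_E(\bs{x},\bs{x}_i)+d_E(\bs{x}_i,\bs{x}_{j_\ell})
\le h_{X,\TT}+\operatorname{diam}_E(\sigma_j)
< H+\operatorname{diam}_E(\sigma_j)=R_j .
\]
Hence $\bs{x}$ belongs to the set
\[
\bigcap_{\ell=1}^{m_d}\{\bs{\xi}\in\TT:\ d_E(\bs{\xi},\bs{x}_{j_\ell})<R_j\}.
\]
This set is open in $\TT$, being a finite intersection of preimages of open intervals under the continuous maps $\bs{\xi}\mapsto d_E(\bs{\xi},\bs{x}_{j_\ell})$. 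It is also contained in $C_j$. Therefore $\bs{x}\in\overset{\circ}{C}_j$ and $J_{\bs{x}}\neq\emptyset$.

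For the positivity of the denominator, let $\bs{x}\in\TT\setminus X$ and take the same index $j$. Each distance satisfies $0<d_E(\bs{x},\bs{x}_{j_\ell})<R_j$, where positivity holds because $\bs{x}\notin X$. So each factor
\[
\frac{1}{d_E(\bs{x},\bs{x}_{j_\ell})}-\frac1{R_j}
\]
is strictly positive, and hence $\omega_{\mu,j}(\bs{x})>0$. Every $\omega_{\mu,k}$ is nonnegative, so
\[
\sum_{k\in J_{\bs{x}}}\omega_{\mu,k}(\bs{x})\ge\omega_{\mu,j}(\bs{x})>0 .
\]

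The only point needing care is the strict inequality that places $\bs{x}$ in the interior of $C_j$, rather than merely in $C_j$. This is exactly where the hypothesis $H>h_{X,\TT}$ is used. We also use that the minimum in the fill distance is attained, which holds because $X$ is finite.
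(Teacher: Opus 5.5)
Your proof is correct and follows the paper's argument: you pick a node within distance $<H$ of $\bs{x}$ and a stencil $\sigma_j$ containing it, then use the triangle inequality to get $d_E(\bs{x},\bs{x}_{j_\ell})<R_j$ for every $\ell$. Your extra steps fill in details the paper leaves implicit: you check that the strict-inequality set is open and contained in $C_j$, and you exhibit a specific active index $j$ with $\omega_{\mu,j}(\bs{x})>0$ to show the denominator is positive.
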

\begin{proof}
Fix $\bs{x}\in\TT$. Since $H>h_{X,\TT}$, there exists a node $\bs{x}_i\in X$ such that
\[
d_E(\bs{x},\bs{x}_i)<H.
\]
Because the stencils in $\Sigma$ cover $X$, the node $\bs{x}_i$ belongs to at least one stencil $\sigma_j$. Hence, for every $\bs{x}_{j_\ell}\in\sigma_j$, the triangle inequality gives
\[
d_E(\bs{x},\bs{x}_{j_\ell})
\leq d_E(\bs{x},\bs{x}_i)+d_E(\bs{x}_i,\bs{x}_{j_\ell})
<H+\operatorname{diam}_E(\sigma_j)
=R_j.
\]
Thus $\bs{x}\in\overset{\circ}{C}_j$, and therefore $j\in J_{\bs{x}}$.
\end{proof}

Finally, the partition-of-unity property implies that the operator
reproduces the polynomial restriction space $\HH_d(\TT)$. Indeed, if
$p\in\HH_d(\TT)$, then the uniqueness of the local interpolation
problems yields
\[
P_j[p]=p,
\qquad
j=1,\ldots,L.
\]
Therefore, for every $\bs{x}\in\TT\setminus X$,
\[
\widetilde{\mathcal M}_{\mu}[p](\bs{x})
=
\sum_{j\in J_{\bs{x}}}
\widetilde W_{\mu,j}(\bs{x})p(\bs{x})
=
p(\bs{x})
\sum_{j\in J_{\bs{x}}}
\widetilde W_{\mu,j}(\bs{x})
=
p(\bs{x}).
\]
At the interpolation nodes, the same identity follows directly from~\eqref{eq:csms-operator}. Hence,
\[
\widetilde{\mathcal M}_{\mu}[p]=p,
\qquad
p\in\HH_d(\TT).
\]

\subsection{Error estimate and convergence}
\label{sec:error-estimate}

We now derive an error estimate for a family of compactly supported multinode Shepard operators of fixed polynomial degree $d$. Let $\{X_h\}_{h>0}$ be a family of node sets on $\TT$, with
\[
h:=h_{X_h,\TT}\longrightarrow0,
\]
and let $\Sigma_h=\{\sigma_{1,h},\ldots,\sigma_{L_h,h}\}$ be a family of unisolvent local stencils covering $X_h$. We denote by $P_{j,h}$ the corresponding local interpolation operator, by $R_{j,h}$ the support radius, by $J_{\bs{x},h}$ the active index set, and by $\widetilde W_{\mu,j,h}$ the normalized compact weights. The resulting compactly supported multinode Shepard operator is denoted by $\widetilde{\mathcal M}_{\mu,h}$.

For each stencil $\sigma_{j,h}=\{\bs{x}_{j_1,h},\ldots,\bs{x}_{j_{m_d},h}\}$, let $\ell_{j_1,h},\ldots,\ell_{j_{m_d},h}\in\HH_d(\TT)$ be the fundamental polynomials satisfying
\[
\ell_{j_k,h}(\bs{x}_{j_\ell,h})=\delta_{k\ell}=\begin{cases}
    1, & k=\ell,\\
    0, & k\neq \ell.
\end{cases}
\]
We introduce the maximal support radius
\begin{equation*}
\rho_h:=\max_{1\leq j\leq L_h}R_{j,h}
\end{equation*}
and the local stability factor
\begin{equation}
\label{eq:Lambda-h}
\Lambda_h
:=
\sup_{\bs{x}\in\TT}
\max_{j\in J_{\bs{x},h}}
\sum_{k=1}^{m_d}
\abs{\ell_{j_k,h}(\bs{x})}.
\end{equation}
The non-emptiness of $J_{\bs{x},h}$ follows from Lemma~\ref{lem:coverage} whenever the radii are chosen as in~\eqref{eq:support-radius}.

Since $\TT\subset\RR^3$ is a compact embedded smooth submanifold,
the tubular neighbourhood theorem and compactness imply that there
exists $\delta_0>0$ such that every $\bs{y}\in U_{\delta_0}(\TT)$ has
a unique nearest point $\pi(\bs{y})\in\TT$, where
\[
U_\delta(\TT)
:=
\left\{
\bs{y}\in\RR^3:
\operatorname{dist}(\bs{y},\TT)<\delta
\right\}.
\]
The resulting nearest-point projection
\[
\pi:U_{\delta_0}(\TT)\longrightarrow\TT
\]
is smooth. Indeed, the existence of a nearest point follows from
compactness; the displacement from a nearest point is normal to
$\TT$, and uniqueness follows, for $\delta_0$ sufficiently small,
from the injectivity of the normal map. See
\cite[Theorem~6.24 and Proposition~6.25]{Lee}, and
cf.\ \cite[Problem~6-5]{Lee}.

We equip $C^k(\TT)$ with the norm induced by the periodic
parametrization $\Phi:\mathbb T^2\to\TT$. More precisely, identifying
$f\circ\Phi$ with its $2\pi$-periodic lift to $\RR^2$, we set
\[
\norm{f}_{C^k(\TT)}
:=
\max_{|\alpha|\leq k}
\norm{
\partial^\alpha(f\circ\Phi)
}_{L^\infty([0,2\pi]^2)}.
\]
This norm is equivalent to every standard $C^k$-norm on the compact
manifold $\TT$.

Given $f\in C^{d+1}(\TT)$, we define its normal extension by
\[
Ef(\bs{y})
:=
f\bigl(\pi(\bs{y})\bigr),
\qquad
\bs{y}\in U_{\delta_0}(\TT).
\]
For every $0<\delta_1<\delta_0$, one has
\[
\overline{U_{\delta_1}(\TT)}
\Subset
U_{\delta_0}(\TT).
\]
Since $\Phi^{-1}\circ\pi$ is smooth, all its derivatives up to order
$d+1$ are uniformly bounded on
$\overline{U_{\delta_1}(\TT)}$. Writing
$Ef=(f\circ\Phi)\circ(\Phi^{-1}\circ\pi)$, the multivariate chain
rule therefore yields
\begin{equation}
\label{eq:extension-bound}
\norm{Ef}_{C^{d+1}(U_{\delta_1}(\TT))}
\leq
C_E\norm{f}_{C^{d+1}(\TT)},
\end{equation}
where $C_E>0$ is independent of $f$ and depends only on $\TT$, $d$,
$\delta_1$, and the fixed parametrization $\Phi$.

\begin{lemma}
\label{lem:ambient-taylor}
Let $0<\delta_1<\delta_0$. For $f\in C^{d+1}(\TT)$ and $\bs{x}\in\TT$, define the ambient Taylor polynomial
\begin{equation*}
Q_{d,\bs{x}}(\bs{y})
:=
\sum_{|\alpha|\leq d}
\frac{D^\alpha Ef(\bs{x})}{\alpha!}
(\bs{y}-\bs{x})^\alpha,
\qquad
\bs{y}\in\RR^3.
\end{equation*}
Then $Q_{d,\bs{x}}|_{\TT}\in\HH_d(\TT)$ and there exists a constant $C_{\TT,d,\delta_1}>0$ such that
\begin{equation}
\label{eq:ambient-taylor-remainder}
\abs{f(\bs{y})-Q_{d,\bs{x}}(\bs{y})}
\leq
C_{\TT,d,\delta_1}
\norm{f}_{C^{d+1}(\TT)}
\norm{\bs{y}-\bs{x}}_2^{d+1}
\end{equation}
for all $\bs{x},\bs{y}\in\TT$ satisfying $\norm{\bs{y}-\bs{x}}_2<\delta_1$.
\end{lemma}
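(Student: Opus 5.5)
The proof splits into two parts: the membership claim $Q_{d,\bs{x}}|_{\TT}\in\HH_d(\TT)$, and the remainder estimate~\eqref{eq:ambient-taylor-remainder}, which is a standard Taylor argument applied to the normal extension $Ef$ on the tubular neighbourhood.

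The membership claim is immediate. For fixed $\bs{x}$, the coefficients $D^\alpha Ef(\bs{x})/\alpha!$ are real numbers, and each $(\bs{y}-\bs{x})^\alpha$ with $|\alpha|\le d$ is a trivariate polynomial in $\bs{y}$ of total degree at most $d$. Hence $Q_{d,\bs{x}}\in\PP_d(\RR^3)$, and its restriction lies in $\HH_d(\TT)$ by definition of the restriction space. These coefficients make sense because $Ef\in C^{d+1}(U_{\delta_1}(\TT))$ by~\eqref{eq:extension-bound}, and $\bs{x}\in\TT\subset U_{\delta_1}(\TT)$.

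For the estimate, fix $\bs{x},\bs{y}\in\TT$ with $\norm{\bs{y}-\bs{x}}_2<\delta_1$ and consider the segment $\bs{z}(t)=\bs{x}+t(\bs{y}-\bs{x})$, $t\in[0,1]$. Each point of the segment satisfies
\[
\operatorname{dist}(\bs{z}(t),\TT)\le\min\{t,1-t\}\norm{\bs{y}-\bs{x}}_2<\delta_1 ,
\]
so the whole segment lies in $U_{\delta_1}(\TT)$, where $Ef$ is $C^{d+1}$. Two identities then hold:
\begin{itemize}
\item since $\bs{y}\in\TT$, we have $\pi(\bs{y})=\bs{y}$, hence $Ef(\bs{y})=f(\bs{y})$;
\item $Q_{d,\bs{x}}$ is exactly the degree-$d$ Taylor polynomial of $Ef$ at $\bs{x}$.
\end{itemize}
Applying Taylor's theorem with Lagrange (or integral) remainder to $g(t)=Ef(\bs{z}(t))$ gives
\[
f(\bs{y})-Q_{d,\bs{x}}(\bs{y})=\sum_{|\alpha|=d+1}\frac{D^\alpha Ef(\bs{\xi})}{\alpha!}(\bs{y}-\bs{x})^\alpha
\]
for some $\bs{\xi}$ on the segment.

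To bound this remainder, use $|(\bs{y}-\bs{x})^\alpha|\le\norm{\bs{y}-\bs{x}}_2^{d+1}$ for each $|\alpha|=d+1$, and bound each derivative by $\norm{Ef}_{C^{d+1}(U_{\delta_1}(\TT))}$. Then apply~\eqref{eq:extension-bound}. This yields the claim with
\[
C_{\TT,d,\delta_1}=C_E\sum_{|\alpha|=d+1}\frac{1}{\alpha!}.
\]
This constant depends only on $\TT$, $d$ and $\delta_1$ (through $C_E$).

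The only delicate point is the step above that keeps the segment inside the tube $U_{\delta_1}(\TT)$. The segment leaves the surface, and $Ef$ is defined and controlled only within the tube. The hypothesis $\norm{\bs{y}-\bs{x}}_2<\delta_1$ is exactly what guarantees containment, because each point of the segment is within distance $\norm{\bs{y}-\bs{x}}_2$ of the endpoint $\bs{x}\in\TT$.
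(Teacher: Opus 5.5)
Your proposal is correct and follows the paper's route: $Q_{d,\bs{x}}\in\PP_d(\RR^3)$ gives the membership at once, and the segment $[\bs{x},\bs{y}]$ stays in $U_{\delta_1}(\TT)$ because each of its points lies within $\norm{\bs{y}-\bs{x}}_2<\delta_1$ of a point of $\TT$; the ordinary Taylor theorem for $Ef$ together with~\eqref{eq:extension-bound} then gives the estimate. You also make explicit the Lagrange remainder and the constant $C_{\TT,d,\delta_1}=C_E\sum_{|\alpha|=d+1}1/\alpha!$, which the paper leaves implicit.
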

\begin{proof}
The polynomial $Q_{d,\bs{x}}$ has total degree at most $d$ in the ambient variables; hence its restriction to $\TT$ belongs to $\HH_d(\TT)$. Let
\[
\bs{z}(t)=\bs{x}+t(\bs{y}-\bs{x}),
\qquad 0\leq t\leq1.
\]
Since $\bs{x}\in\TT$,
\[
\operatorname{dist}(\bs{z}(t),\TT)
\leq
\norm{\bs{z}(t)-\bs{x}}_2
=
t\norm{\bs{y}-\bs{x}}_2
<\delta_1
\]
for $0\leq t<1$, while $\bs{z}(1)=\bs{y}\in\TT$. Thus the whole segment $[\bs{x},\bs{y}]$ lies in $U_{\delta_1}(\TT)$. The ordinary multivariate Taylor theorem applied to $Ef$ along this segment, together with~\eqref{eq:extension-bound}, yields~\eqref{eq:ambient-taylor-remainder}.
\end{proof}

\begin{remark}
\label{rem:explicit-tubular-radius}
For the standard torus parametrized by~\eqref{eq:param}, a unit normal
field is given by
\[
\boldsymbol{\nu}(\theta,\varphi)
=
\bigl(
\cos\theta\cos\varphi,\,
\cos\theta\sin\varphi,\,
\sin\theta
\bigr).
\]
The corresponding normal map therefore takes the explicit form
\[
\begin{aligned}
\Psi(\theta,\varphi,t)
&=
\Phi(\theta,\varphi)
+t\boldsymbol{\nu}(\theta,\varphi)\\
&=
\bigl(
(R+(r+t)\cos\theta)\cos\varphi,\,
(R+(r+t)\cos\theta)\sin\varphi,\,
(r+t)\sin\theta
\bigr).
\end{aligned}
\]
Consequently, any
\[
0<\delta_0<\min\{r,R-r\}
\]
is an admissible tubular radius. Indeed, for every
$|t|<\delta_0$ one has
\[
0<r+t<R.
\]
Thus, in every meridional half-plane, the pair $(r+t,\theta)$ gives
unique polar coordinates with respect to the centre of the generating
circle, while
\[
R+(r+t)\cos\theta
\geq R-(r+t)>0
\]
ensures that the revolution angle $\varphi$ is uniquely determined.
Hence the normal map is regular and one-to-one on
$\mathbb T\times(-\delta_0,\delta_0)$.
 
The two bounds have a direct geometric interpretation. At $t=-r$, the
normal fibres collapse onto the core circle
\[
\bigl\{
(R\cos\varphi,R\sin\varphi,0):
\varphi\in[0,2\pi)
\bigr\},
\]
whereas at $t=R-r$ the normal fibres issuing from the inner equator
collapse at the origin. A more conservative choice, such as
\[
0<\delta_0<
\min\left\{r,\frac{R-r}{2}\right\},
\]
is therefore also admissible, although it is not sharp.
 
Finally, the preceding Taylor argument does not require the tubular
neighbourhood to be globally convex. The active-support condition ensures
that the segment joining the evaluation point to each relevant interpolation
node remains inside the tubular neighbourhood. More generally, Taylor
remainder estimates on non-convex domains may be obtained under the
Whitney-type path condition used by Farwig~\cite{Farwig}.
\end{remark}

\begin{theorem}
\label{thm:csms-error}
Fix $0<\delta_1<\delta_0$ and assume that $\rho_h<\delta_1$. Then, for every $\mu>0$ and every $f\in C^{d+1}(\TT)$,
\begin{equation}
\label{eq:csms-error-bound}
\norm{f-\widetilde{\mathcal M}_{\mu,h}[f]}_{L^\infty(\TT)}
\leq
C_{\TT,d,\delta_1}
\Lambda_h\rho_h^{d+1}
\norm{f}_{C^{d+1}(\TT)}.
\end{equation}
The constant is independent of $h$, $\mu$, and $f$.
\end{theorem}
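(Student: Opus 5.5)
The plan is to combine the partition of unity with polynomial reproduction of each local interpolant, and then control the error through the ambient Taylor polynomial of Lemma~\ref{lem:ambient-taylor}.

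\emph{Step 1: reduce to a single active stencil.} At a node $\bs{x}=\bs{x}_i$ the error vanishes by the definition~\eqref{eq:csms-operator}, so fix $\bs{x}\in\TT\setminus X_h$. By Lemma~\ref{lem:coverage}, $J_{\bs{x},h}\neq\emptyset$. Since the weights $\widetilde W_{\mu,j,h}(\bs{x})$ are nonnegative and sum to one over $J_{\bs{x},h}$,
\[
\abs{f(\bs{x})-\widetilde{\mathcal M}_{\mu,h}[f](\bs{x})}
\le \sum_{j\in J_{\bs{x},h}}\widetilde W_{\mu,j,h}(\bs{x})\abs{f(\bs{x})-P_{j,h}[f](\bs{x})}
\le \max_{j\in J_{\bs{x},h}}\abs{f(\bs{x})-P_{j,h}[f](\bs{x})}.
\]
This is the only place the weights enter, which is why the constant will not depend on $\mu$.

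\emph{Step 2: bound the error of each active local interpolant.} Fix $j\in J_{\bs{x},h}$ and let $Q=Q_{d,\bs{x}}|_{\TT}\in\HH_d(\TT)$. Uniqueness of local interpolation gives $P_{j,h}[Q]=Q$. Since $Q(\bs{x})=Ef(\bs{x})=f(\bs{x})$, we get
\[
f(\bs{x})-P_{j,h}[f](\bs{x})=P_{j,h}[Q-f](\bs{x})=\sum_{k=1}^{m_d}\bigl(Q(\bs{x}_{j_k,h})-f(\bs{x}_{j_k,h})\bigr)\ell_{j_k,h}(\bs{x}).
\]

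\emph{Step 3: apply the Taylor bound at the stencil nodes.} Because $j\in J_{\bs{x},h}$, we have $\bs{x}\in\overset{\circ}{C}_{j,h}$. Hence every node satisfies $\norm{\bs{x}_{j_k,h}-\bs{x}}_2\le R_{j,h}\le\rho_h<\delta_1$. Lemma~\ref{lem:ambient-taylor} then applies to each node and gives
\[
\abs{Q(\bs{x}_{j_k,h})-f(\bs{x}_{j_k,h})}\le C_{\TT,d,\delta_1}\norm{f}_{C^{d+1}(\TT)}\rho_h^{d+1}.
\]
Summing against $\abs{\ell_{j_k,h}(\bs{x})}$ and using the definition~\eqref{eq:Lambda-h} of $\Lambda_h$, where $j$ ranges over $J_{\bs{x},h}$, bounds each term by $C_{\TT,d,\delta_1}\Lambda_h\rho_h^{d+1}\norm{f}_{C^{d+1}(\TT)}$. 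Taking the supremum over $\bs{x}$ yields~\eqref{eq:csms-error-bound}. The constant is the one from Lemma~\ref{lem:ambient-taylor}, so it is independent of $h$, $\mu$ and $f$.

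\emph{Main difficulty.} The delicate point is Step 3: we must make sure that the Taylor expansion is taken at the evaluation point $\bs{x}$ and not at a node. This is what makes the condition $\bs{x}\in C_{j,h}$, that is, distance less than $R_{j,h}$ to \emph{every} node of the stencil, exactly the locality needed. It also guarantees that each segment from $\bs{x}$ to a node stays inside $U_{\delta_1}(\TT)$, so Lemma~\ref{lem:ambient-taylor} can be used without any convexity assumption.
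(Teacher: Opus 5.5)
Your proposal is correct and follows essentially the same argument as the paper. The partition of unity reduces the error to the active local interpolants, each of which reproduces the ambient Taylor polynomial $Q_{d,\bs{x}}$ centred at the evaluation point, and Lemma~\ref{lem:ambient-taylor} is then applied at the stencil nodes using $\norm{\bs{x}_{j_k,h}-\bs{x}}_2\le\rho_h<\delta_1$. The paper first bounds each local error by $R_{j,h}^{d+1}$ before passing to $\rho_h^{d+1}$, while you go to $\rho_h^{d+1}$ directly. This difference is only cosmetic, and your non-strict inequality $\le R_{j,h}$ is, if anything, the more careful reading of $\bs{x}\in\overset{\circ}{C}_{j,h}$.
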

\begin{proof}
At the interpolation nodes the error is zero by definition. Let therefore $\bs{x}\in\TT\setminus X_h$ and $j\in J_{\bs{x},h}$. Since $Q_{d,\bs{x}}|_{\TT}\in\HH_d(\TT)$, the local interpolation operator reproduces it exactly:
\[
P_{j,h}[Q_{d,\bs{x}}](\bs{x})
=Q_{d,\bs{x}}(\bs{x})
=f(\bs{x}).
\]
Consequently,
\[
\abs{f(\bs{x})-P_{j,h}[f](\bs{x})}
=
\abs{P_{j,h}[Q_{d,\bs{x}}-f](\bs{x})}
\leq
\sum_{k=1}^{m_d}
\abs{\ell_{j_k,h}(\bs{x})}
\abs{Q_{d,\bs{x}}(\bs{x}_{j_k,h})-f(\bs{x}_{j_k,h})}.
\]
Because $j\in J_{\bs{x},h}$,
\[
\norm{\bs{x}_{j_k,h}-\bs{x}}_2
<R_{j,h}
\leq\rho_h
<\delta_1,
\]
for every $k=1,\ldots,m_d$. Lemma~\ref{lem:ambient-taylor} therefore gives
\[
\abs{f(\bs{x})-P_{j,h}[f](\bs{x})}
\leq
C_{\TT,d,\delta_1}
\Lambda_h R_{j,h}^{d+1}
\norm{f}_{C^{d+1}(\TT)}.
\]
Using non-negativity of the compact weights and their partition-of-unity property, we obtain
\[
\begin{aligned}
\abs{f(\bs{x})-\widetilde{\mathcal M}_{\mu,h}[f](\bs{x})}
&\leq
\sum_{j\in J_{\bs{x},h}}
\widetilde W_{\mu,j,h}(\bs{x})
\abs{f(\bs{x})-P_{j,h}[f](\bs{x})}\\
&\leq
C_{\TT,d,\delta_1}
\Lambda_h\rho_h^{d+1}
\norm{f}_{C^{d+1}(\TT)}.
\end{aligned}
\]
Taking the supremum over $\bs{x}\in\TT$ proves~\eqref{eq:csms-error-bound}.
\end{proof}

\begin{corollary}
\label{cor:fill-distance-convergence}
Assume that $h=h_{X_h,\TT}\to0$ and that there exist constants $C_H,C_\sigma,\Lambda>0$, independent of $h$, such that
\begin{equation}
\label{eq:convergence-assumptions}
h<H_h\leq C_Hh,
\qquad
\max_{1\leq j\leq L_h}\operatorname{diam}_E(\sigma_{j,h})
\leq C_\sigma h,
\qquad
\Lambda_h\leq\Lambda.
\end{equation}
If the support radii are chosen as
\[
R_{j,h}=\operatorname{diam}_E(\sigma_{j,h})+H_h,
\]
then, for every fixed $d$ and every $f\in C^{d+1}(\TT)$,
\begin{equation}
\label{eq:fill-distance-order}
\norm{f-\widetilde{\mathcal M}_{\mu,h}[f]}_{L^\infty(\TT)}
\leq
C h^{d+1}\norm{f}_{C^{d+1}(\TT)},
\end{equation}
for all sufficiently small $h$, where $C$ is independent of $h$, $\mu$, and $f$.
\end{corollary}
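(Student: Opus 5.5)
The plan is to derive the corollary directly from Theorem~\ref{thm:csms-error}, by converting the maximal support radius $\rho_h$ into a multiple of the fill distance $h$.

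\textbf{Step 1: bound $\rho_h$ by $h$.} By the choice $R_{j,h}=\operatorname{diam}_E(\sigma_{j,h})+H_h$ and the assumptions \eqref{eq:convergence-assumptions}, every $j$ satisfies
\[
R_{j,h}\leq C_\sigma h+C_Hh .
\]
Taking the maximum over $j$ gives
\[
\rho_h\leq (C_\sigma+C_H)\,h .
\]

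\textbf{Step 2: check the hypotheses of Theorem~\ref{thm:csms-error}.} Fix any $\delta_1\in(0,\delta_0)$, for instance $\delta_1=\delta_0/2$, where $\delta_0$ is the tubular radius; Remark~\ref{rem:explicit-tubular-radius} shows such a $\delta_0$ exists. Since $h\to0$, there is $h_0>0$ such that $(C_\sigma+C_H)h<\delta_1$ for all $h<h_0$. For these $h$ we get $\rho_h<\delta_1$, so the theorem applies.

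The radii have exactly the form \eqref{eq:support-radius} with $H=H_h>h=h_{X_h,\TT}$. Hence Lemma~\ref{lem:coverage} gives $J_{\bs{x},h}\neq\emptyset$. This means both $\Lambda_h$ and the operator are well defined.

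\textbf{Step 3: combine.} Inserting $\Lambda_h\leq\Lambda$ and the bound on $\rho_h$ into \eqref{eq:csms-error-bound} gives
\[
\norm{f-\widetilde{\mathcal M}_{\mu,h}[f]}_{L^\infty(\TT)}
\leq C_{\TT,d,\delta_1}\,\Lambda\,(C_\sigma+C_H)^{d+1}\,h^{d+1}\,\norm{f}_{C^{d+1}(\TT)} .
\]
So we may take
\[
C:=C_{\TT,d,\delta_1}\,\Lambda\,(C_\sigma+C_H)^{d+1}.
\]
This constant depends only on $\TT$, $d$, the fixed $\delta_1$, and the constants in \eqref{eq:convergence-assumptions}. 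It is independent of $h$, $\mu$ and $f$, because the constant of Theorem~\ref{thm:csms-error} has that property.

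\textbf{Main obstacle.} There is essentially no real difficulty; the only delicate point is bookkeeping. The phrase ``sufficiently small $h$'' is exactly what is needed to ensure $\rho_h<\delta_1$. Moreover, $\delta_1$ must be fixed once and for all, independently of $h$, so that $C_{\TT,d,\delta_1}$ does not vary along the family of node sets.
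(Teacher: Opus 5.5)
Your proof is correct and follows the paper's argument: bound $\rho_h\le(C_\sigma+C_H)h$, fix $\delta_1$ so that $\rho_h<\delta_1$ for all sufficiently small $h$, and apply Theorem~\ref{thm:csms-error} together with $\Lambda_h\le\Lambda$. The explicit constant $C=C_{\TT,d,\delta_1}\Lambda(C_\sigma+C_H)^{d+1}$ and the appeal to Lemma~\ref{lem:coverage} to get $J_{\bs{x},h}\neq\emptyset$ are details the paper leaves implicit.
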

\begin{proof}
By~\eqref{eq:convergence-assumptions},
\[
\rho_h
=\max_jR_{j,h}
\leq(C_\sigma+C_H)h.
\]
Hence $\rho_h<\delta_1$ for all sufficiently small $h$. The conclusion follows from Theorem~\ref{thm:csms-error} and the uniform bound $\Lambda_h\leq\Lambda$.
\end{proof}

\begin{remark}
The estimate~\eqref{eq:csms-error-bound} requires no lower threshold on the exponent $\mu$: its role in the proof is only through non-negativity and the partition-of-unity property of the normalized compact weights. The essential assumptions for the order in~\eqref{eq:fill-distance-order} are instead the uniform locality of the stencils and the uniform boundedness of $\Lambda_h$. In particular, nonsingularity of each local Vandermonde matrix alone does not imply the stability condition $\sup_h\Lambda_h<\infty$. The result is an $h$-convergence statement for fixed degree $d$; degree enrichment at fixed node set is a distinct issue.
\end{remark}

\section{Numerical experiments on analytical data}\label{sec:numerics}

This section presents the numerical experiments carried out to assess the performance of the proposed compactly supported multinode Shepard operator on the torus. The experiments are designed to validate both the theoretical properties of the method and its practical approximation capabilities on scattered data.

The numerical investigation is organized into three complementary
parts. The first two examine the approximation behaviour under node
refinement and degree enrichment, respectively. The third provides an
a posteriori analysis of the highest degrees considered, focusing on
the conditioning of the local systems, complete polynomial
reproduction, sampled Lebesgue factors, and sensitivity to data
perturbations.

Throughout these experiments, we consider the following collection of
15 analytical test functions defined on the torus, including both
polynomial and smooth non-polynomial examples \cite{DellAccioSphere}.

\begin{align*}
f_1(x,y,z)&=\frac{1+2x+3y+4z}{6},\\
f_2(x,y,z)&=\frac{-1+2x-3y+4x^2-xy+9y^2+3z^2-yz}{10},\\
f_3(x,y,z)&=\frac{9x^3-2x^2y+3xy^2-4y^3+2z^3-xyz}{10},\\
f_4(x,y,z)&=\frac12+(x+y)^4+z^4,\\
f_5(x,y,z)&=\sin(x)\sin(y)\sin(z),\\
f_6(x,y,z)&=\frac{1+\tanh(-9x-9y+9z)}{9},\\
f_7(x,y,z)&=1+x^8+e^{2y^3}+e^{2z^2}+10xyz,\\
f_8(x,y,z)&=\sin(x+y)+\sin(xz),\\
f_9(x,y,z)&=\frac34\exp\!\left(-\frac{(9x-2)^2+(9y-2)^2+(9z-2)^2}{4}\right)\\
&\quad+\frac34\exp\!\left(-\frac{(9x+1)^2}{49}-\frac{9y+1}{10}-\frac{9z+1}{10}\right)\\
&\quad+\frac12\exp\!\left(-\frac{(9x-7)^2+(9y-3)^2+(9z-5)^2}{4}\right)\\
&\quad-\frac15\exp\!\left(-(9x-4)^2-(9y-7)^2-(9z-5)^2\right),\\
f_{10}(x,y,z)&=x^2+y^2+z^2+\frac14\sin(8x)+\frac14\cos(8y)+\frac14\sin(16z),\\
f_{11}(x,y,z)&=-5\sin(1+10z),\\
f_{12}(x,y,z)&=\frac{e^x+2e^{y+z}}{10},\\
f_{13}(\theta,\phi)&=1+0.2\cos(3\theta-2\phi),\\
f_{14}(x,y,z)&=\exp(0.5x+0.25y-0.1z),\\
f_{15}(x,y,z)&=\sin(3x-2y+z).
\end{align*}
These tests are used to verify the polynomial reproduction property of the proposed operator and to study its approximation behaviour as the polynomial degree and the number of interpolation nodes increase. 
To provide a qualitative overview of the analytical benchmark, the test
functions are visualized on the torus before presenting the interpolation
results. Each function is displayed using two complementary representations:
the toroidal surface is radially deformed according to the function values,
highlighting the geometric variation, while a color map on the original torus
provides a direct visualization of the corresponding scalar field; the two representations are reported in Figures~\ref{fig:test_functions_deformation} and~\ref{fig:test_functions_colormap}, respectively. These two
representations facilitate the interpretation of the interpolation results for
functions exhibiting different levels of smoothness, oscillation, and
localization.

For all analytical tests, the interpolation nodes are generated from a
two-dimensional Halton sequence with bases $2$ and $3$. More precisely,
for $k=1,\ldots,N$, we set
\[
(u_k,v_k)
=
\bigl(\phi_2(k),\phi_3(k)\bigr)\in[0,1)^2,
\]
where $\phi_b$ denotes the radical-inverse function in base $b$, and map
these parameter points onto the torus according to
\[
\theta_k=2\pi u_k,
\qquad
\varphi_k=2\pi v_k,
\qquad
\boldsymbol{x}_k=\Phi(\theta_k,\varphi_k).
\]
The radical-inverse construction of the Halton sequence follows
\cite{Tien1997}. In the implementation, the term corresponding to $k=0$
is discarded, so that exactly $N$ interpolation nodes are retained.
The resulting nested node sets are low-discrepancy with respect to the
flat measure in the periodic parameter domain.
 
Notice that the surface element induced by the toroidal parametrization is
\[
dA
=
\left\|
\frac{\partial\Phi}{\partial\theta}
\times
\frac{\partial\Phi}{\partial\varphi}
\right\|
\,d\theta\,d\varphi
=
r(R+r\cos\theta)\,d\theta\,d\varphi.
\]
Since this Jacobian is not constant, the mapped Halton nodes are not, in
general, exactly equidistributed with respect to the surface-area measure.
Accordingly, throughout this section, the distribution of the Halton nodes
is understood in the parameter-space sense.
The accuracy of the proposed interpolation operator is assessed on an
independent validation set obtained by mapping a $100\times100$
tensor-product grid, uniform in the periodic parameter domain, onto the
toroidal surface.

At each validation point
$\bs{\xi}_i$, the pointwise interpolation error is computed as
\[
e_i=
\left|
f(\bs{\xi}_i)
-\widetilde{\mathcal M}_{\mu}[f](\bs{\xi}_i)
\right|.
\]
The interpolation accuracy is then quantified by means of the following three
error indicators:
\begin{equation*}
E_{\max}=\max_i e_i,
\qquad
E_{\operatorname{mean}}=\frac{1}{n_e}\sum_{i=1}^{n_e}e_i,
\qquad
E_{\operatorname{RMS}}=\sqrt{\frac{1}{n_e}\sum_{i=1}^{n_e}e_i^2},
\end{equation*}
corresponding to the maximum, mean, and root mean square interpolation errors,
respectively. In the experiments we set $R=1$, $r=1/2$ and $\mu=4$. 

\begin{figure}[htbp]
\centering

\begin{minipage}[t]{0.19\textwidth}
    \centering
    \includegraphics[width=\linewidth]{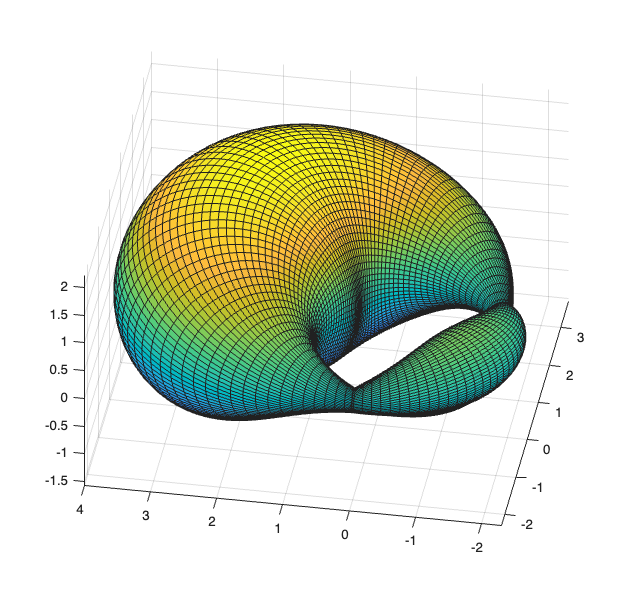}
    \par\smallskip
    {\scriptsize $f_1$}
\end{minipage}\hfill
\begin{minipage}[t]{0.19\textwidth}
    \centering
    \includegraphics[width=\linewidth]{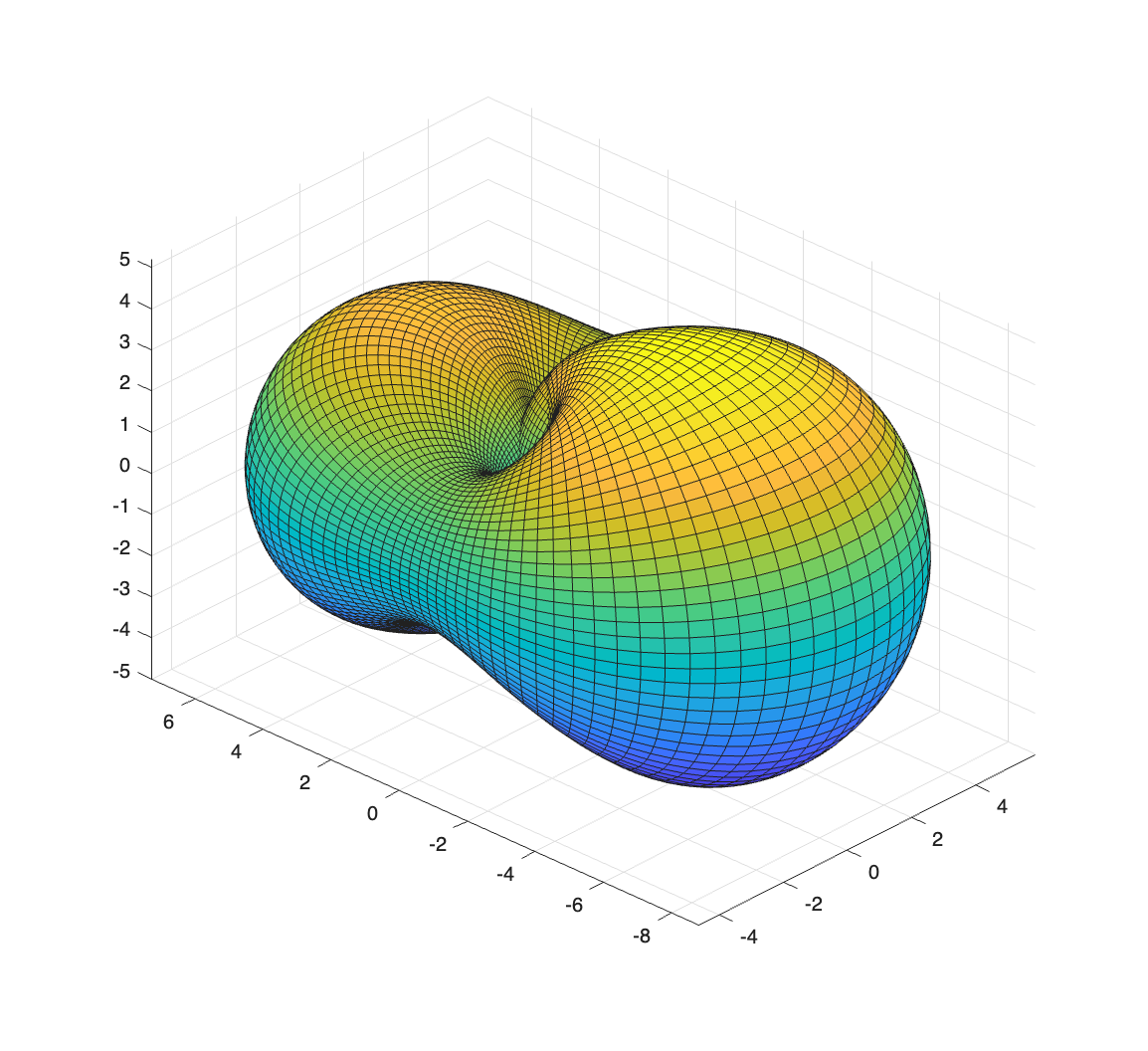}
    \par\smallskip
    {\scriptsize $f_2$}
\end{minipage}\hfill
\begin{minipage}[t]{0.19\textwidth}
    \centering
    \includegraphics[width=\linewidth]{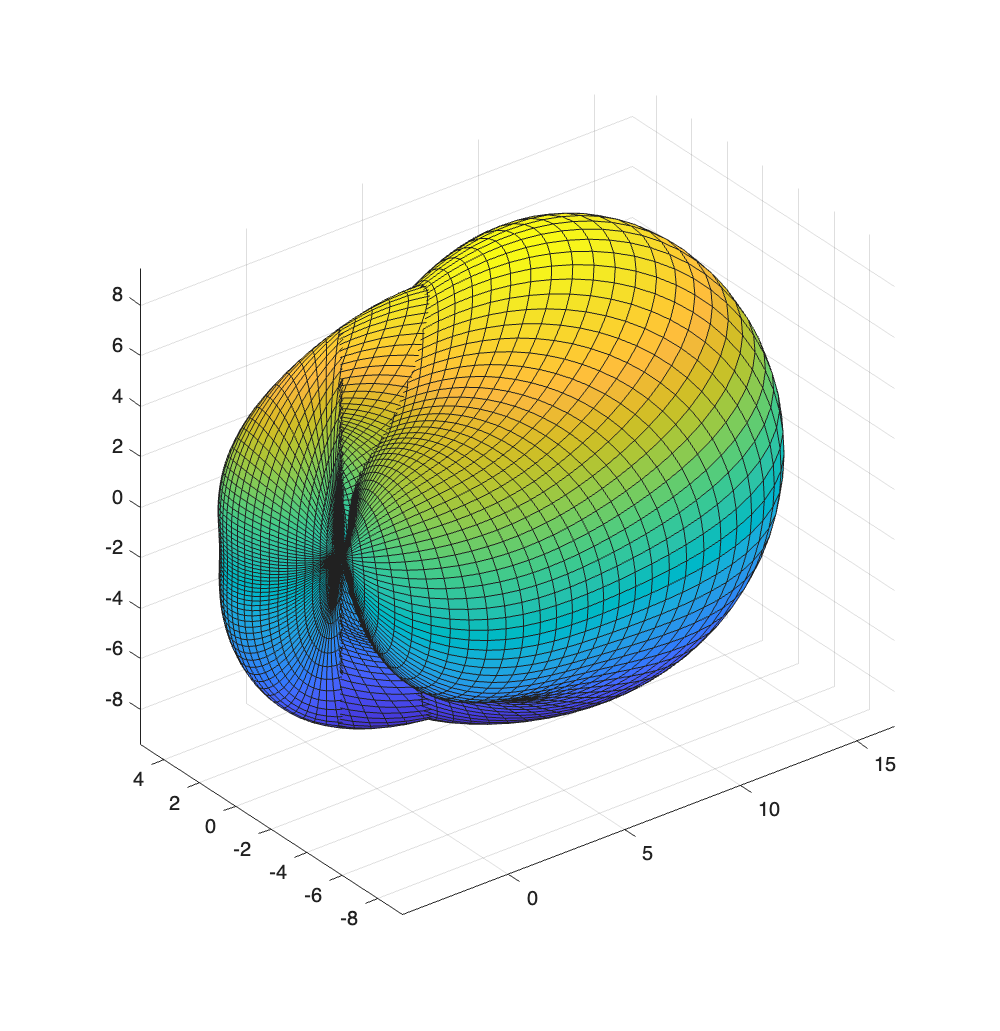}
    \par\smallskip
    {\scriptsize $f_3$}
\end{minipage}\hfill
\begin{minipage}[t]{0.19\textwidth}
    \centering
    \includegraphics[width=\linewidth]{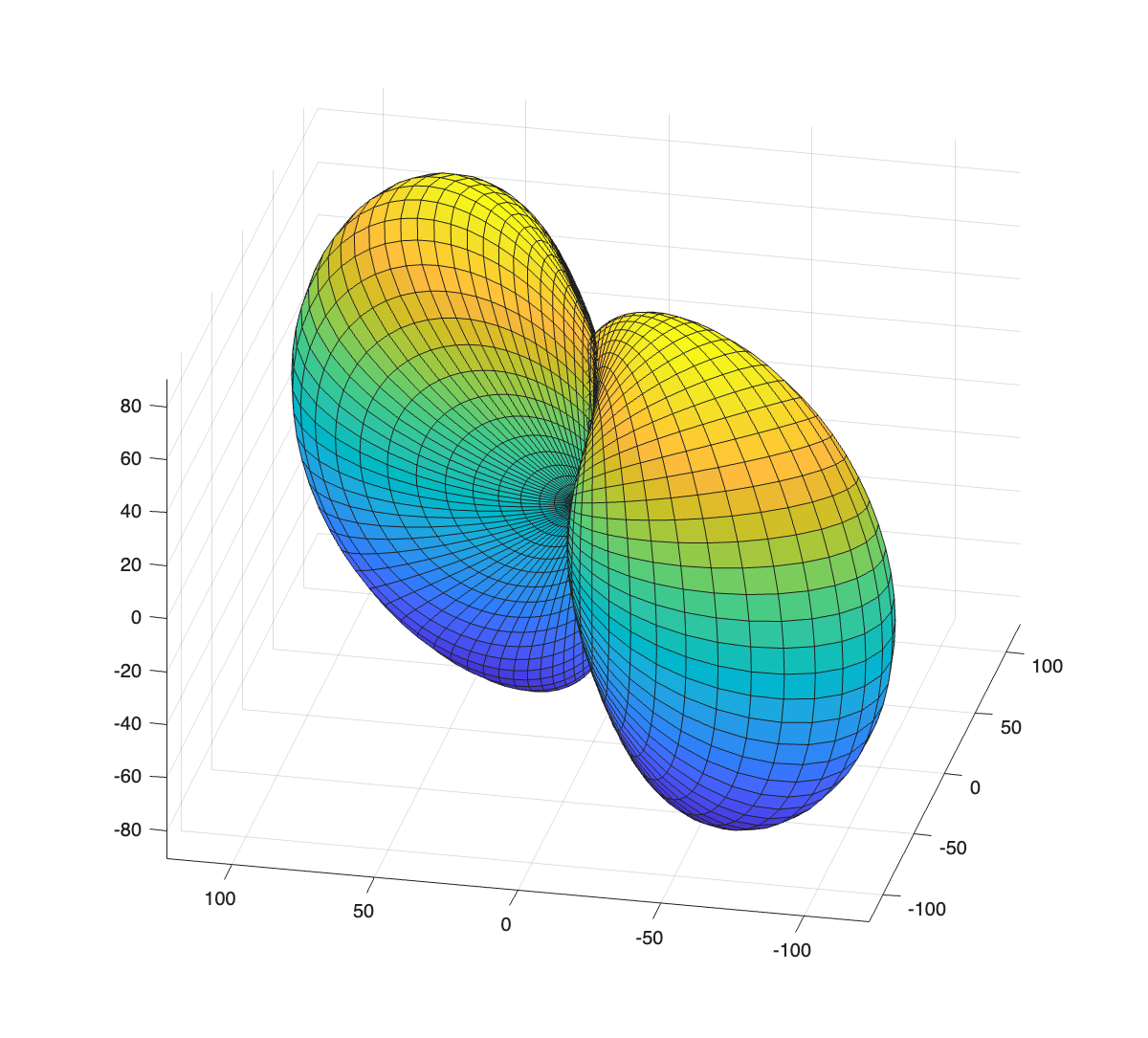}
    \par\smallskip
    {\scriptsize $f_4$}
\end{minipage}\hfill
\begin{minipage}[t]{0.19\textwidth}
    \centering
    \includegraphics[width=\linewidth]{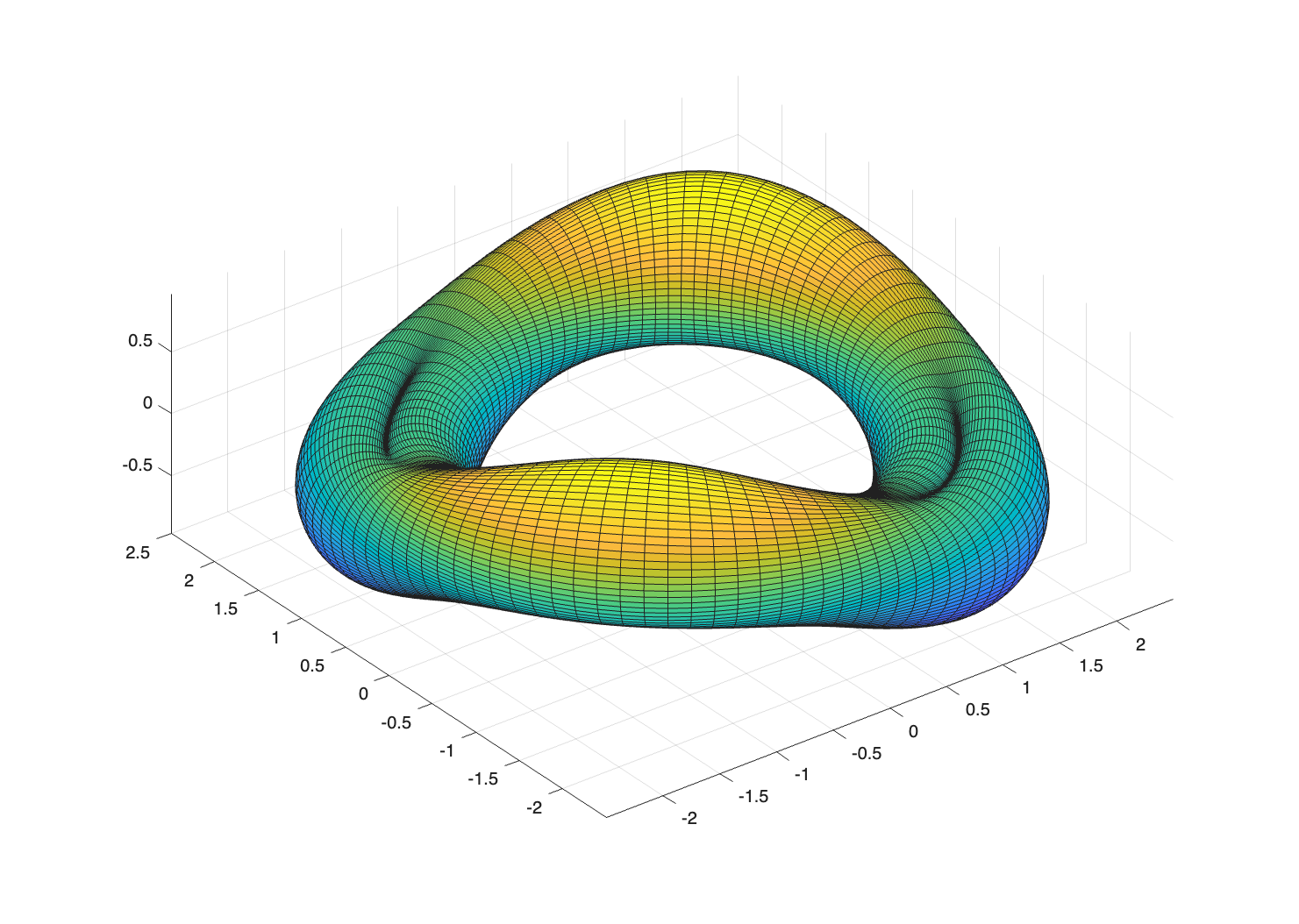}
    \par\smallskip
    {\scriptsize $f_5$}
\end{minipage}

\medskip

\begin{minipage}[t]{0.19\textwidth}
    \centering
    \includegraphics[width=\linewidth]{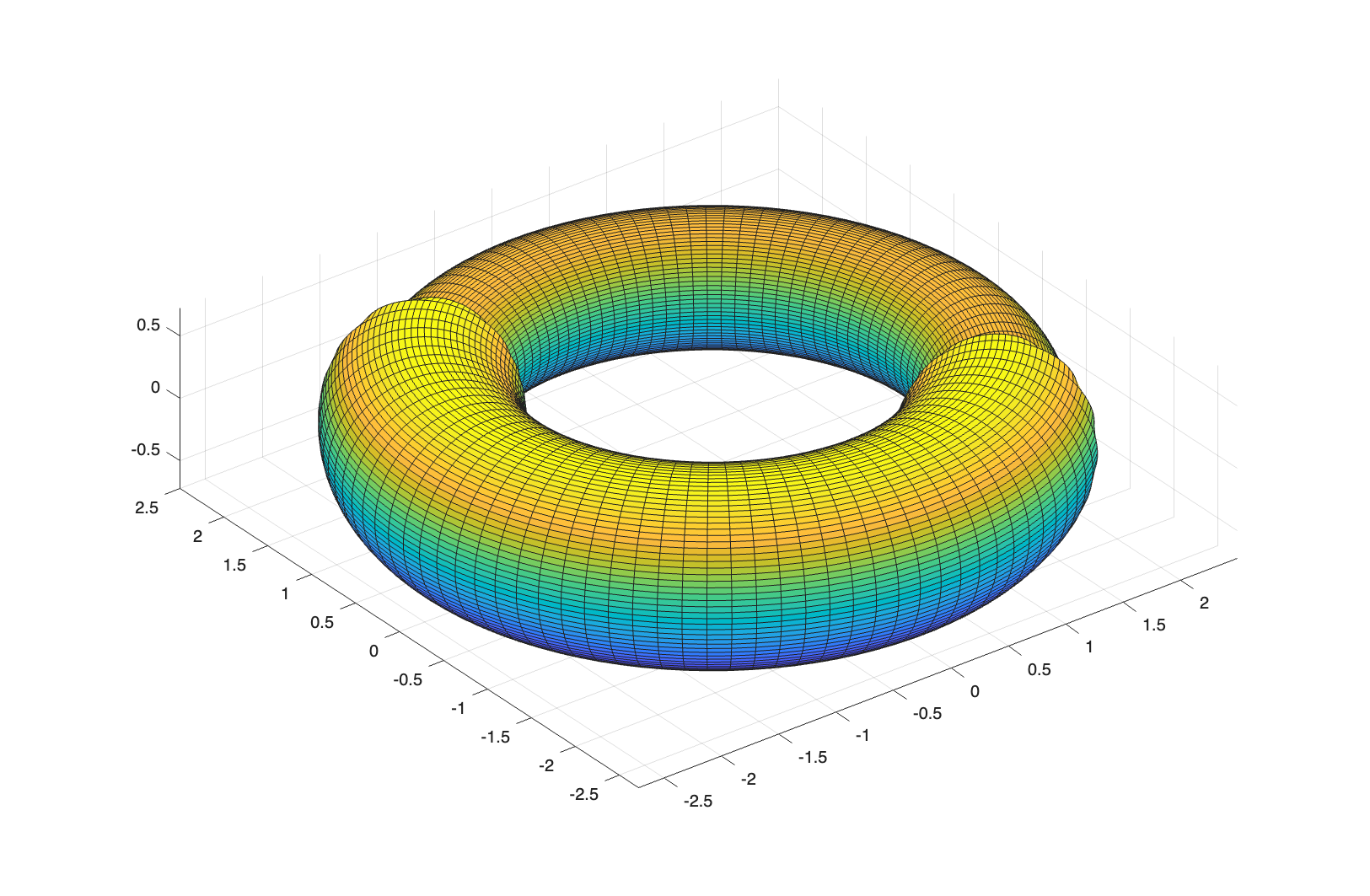}
    \par\smallskip
    {\scriptsize $f_6$}
\end{minipage}\hfill
\begin{minipage}[t]{0.19\textwidth}
    \centering
    \includegraphics[width=\linewidth]{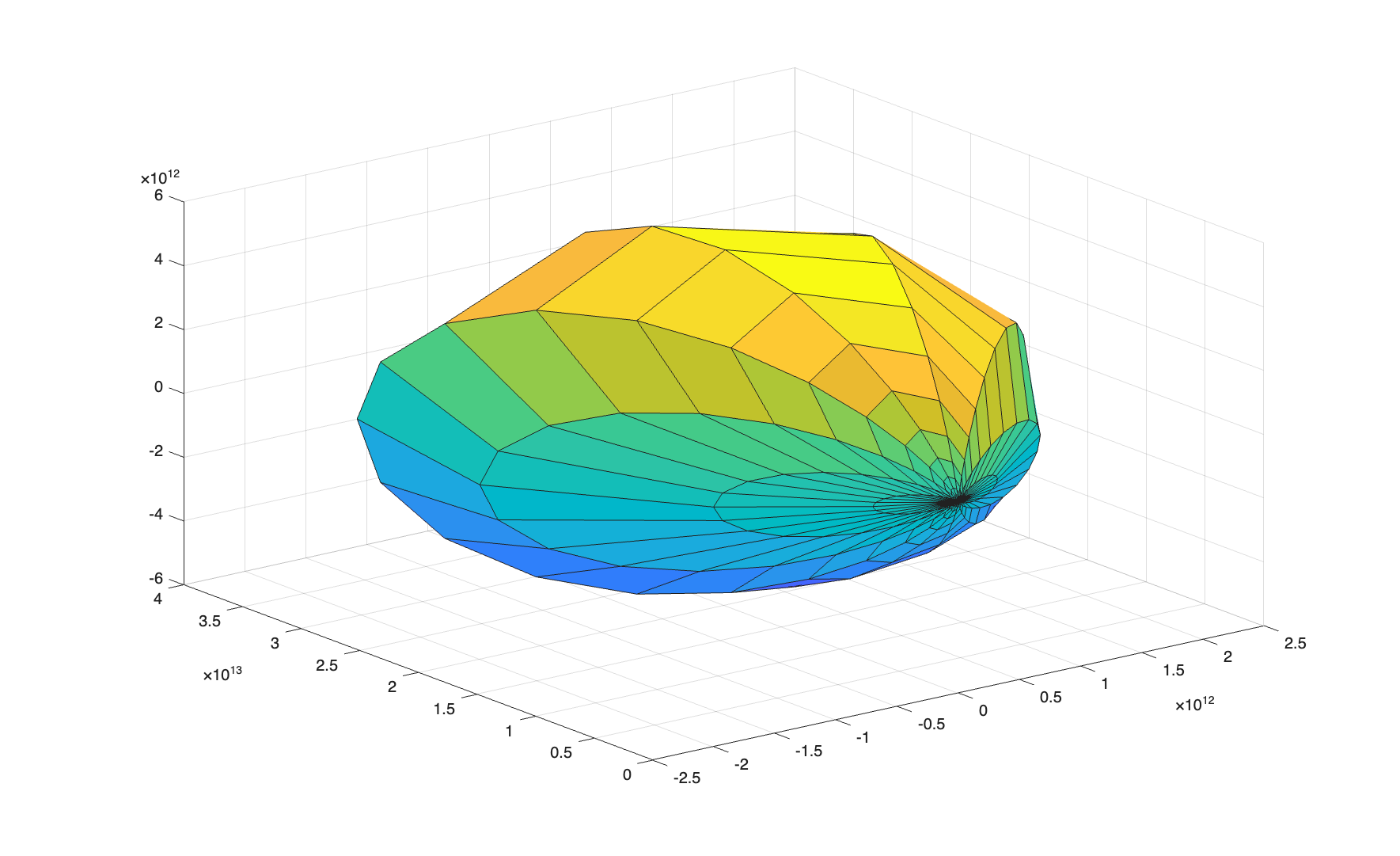}
    \par\smallskip
    {\scriptsize $f_7$}
\end{minipage}\hfill
\begin{minipage}[t]{0.19\textwidth}
    \centering
    \includegraphics[width=\linewidth]{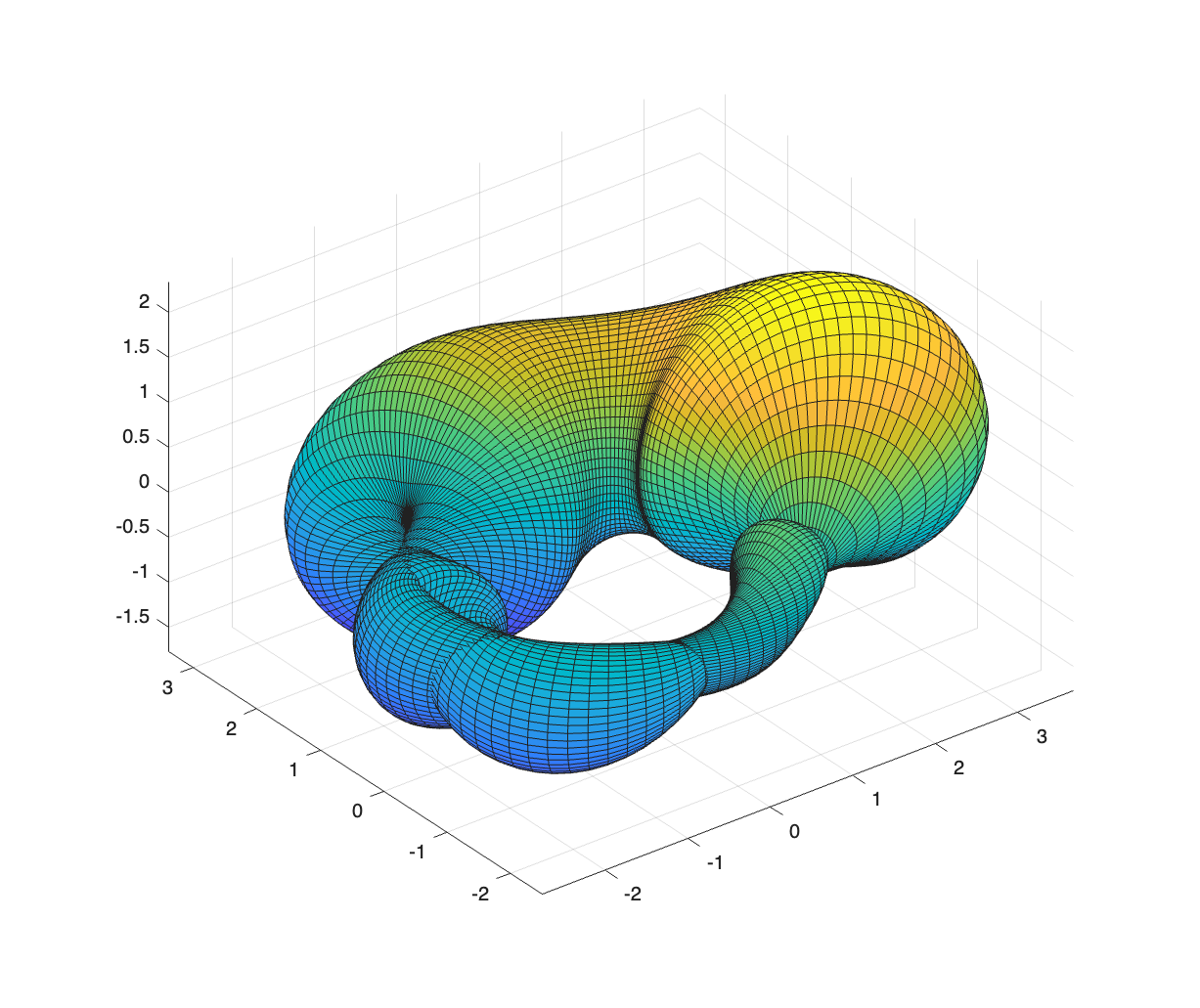}
    \par\smallskip
    {\scriptsize $f_8$}
\end{minipage}\hfill
\begin{minipage}[t]{0.19\textwidth}
    \centering
    \includegraphics[width=\linewidth]{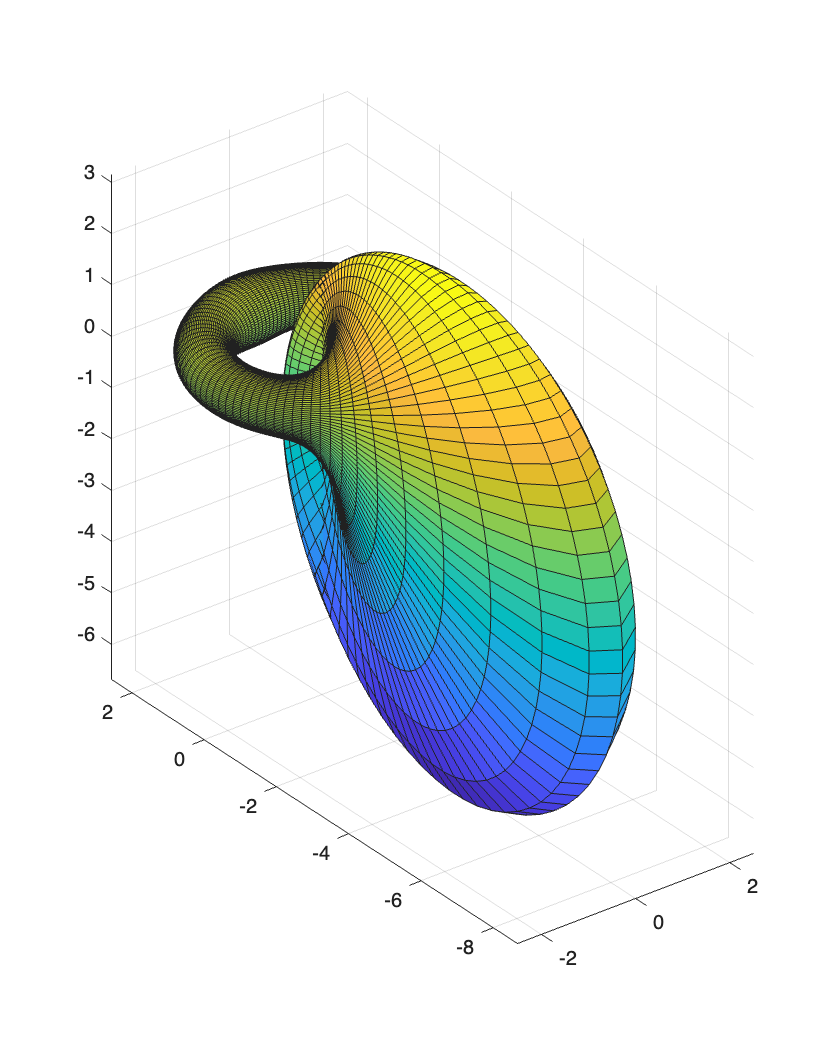}
    \par\smallskip
    {\scriptsize $f_9$}
\end{minipage}\hfill
\begin{minipage}[t]{0.19\textwidth}
    \centering
    \includegraphics[width=\linewidth]{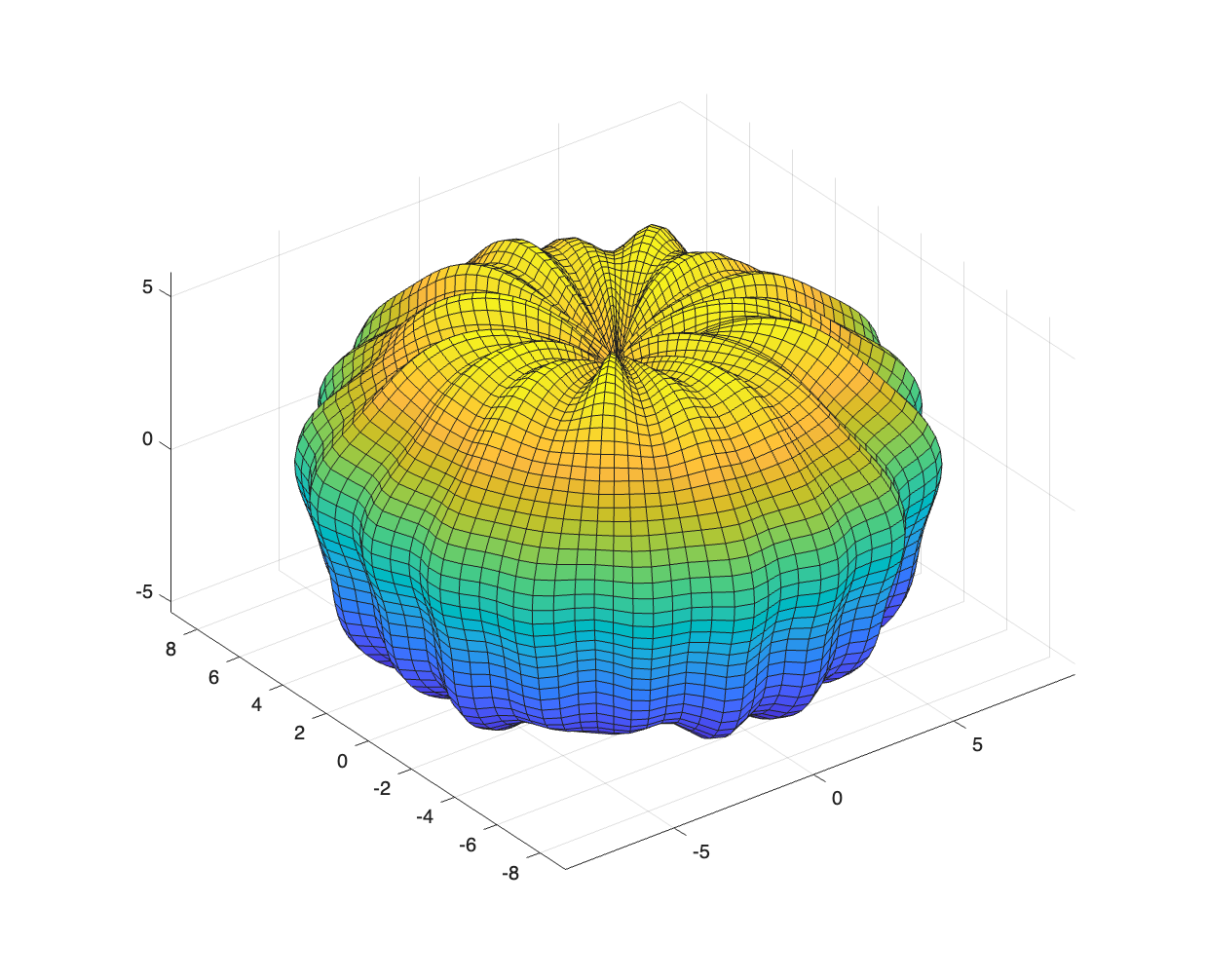}
    \par\smallskip
    {\scriptsize $f_{10}$}
\end{minipage}

\medskip

\begin{minipage}[t]{0.19\textwidth}
    \centering
    \includegraphics[width=\linewidth]{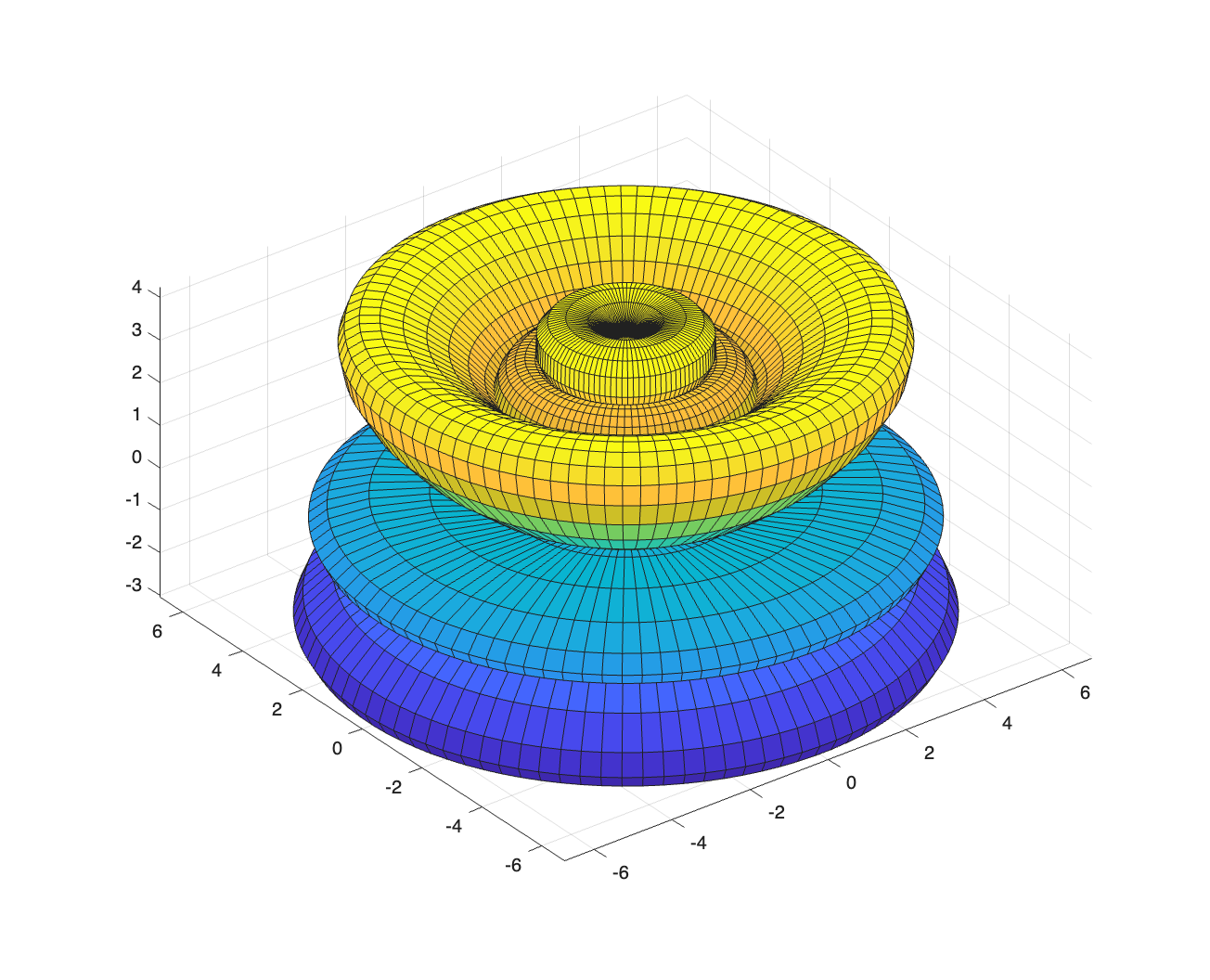}
    \par\smallskip
    {\scriptsize $f_{11}$}
\end{minipage}\hfill
\begin{minipage}[t]{0.19\textwidth}
    \centering
    \includegraphics[width=\linewidth]{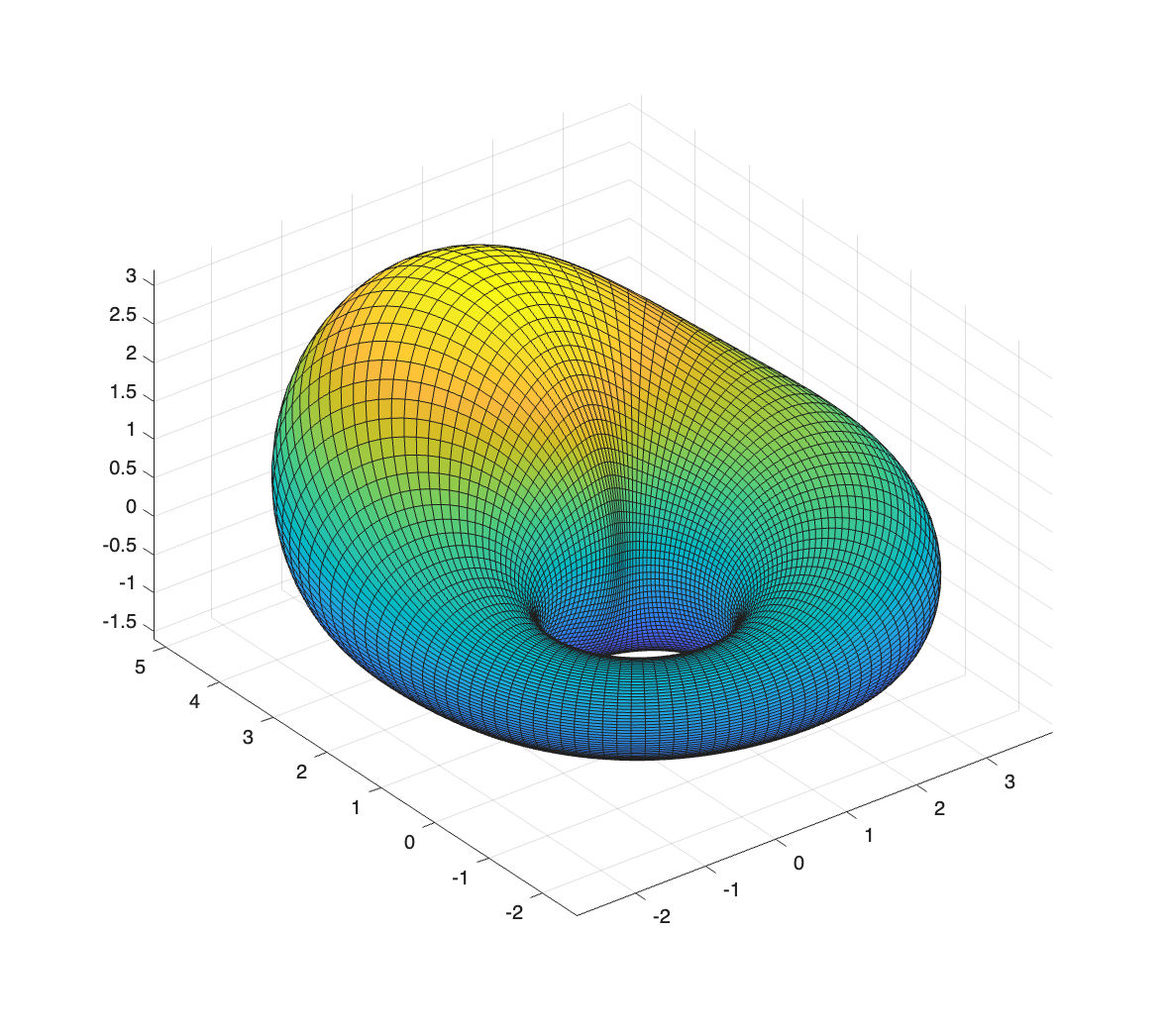}
    \par\smallskip
    {\scriptsize $f_{12}$}
\end{minipage}\hfill
\begin{minipage}[t]{0.19\textwidth}
    \centering
    \includegraphics[width=\linewidth]{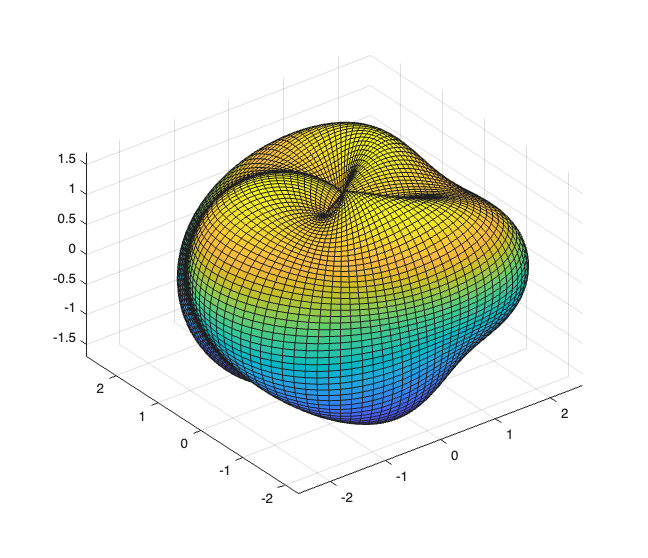}
    \par\smallskip
    {\scriptsize $f_{13}$}
\end{minipage}\hfill
\begin{minipage}[t]{0.19\textwidth}
    \centering
    \includegraphics[width=\linewidth]{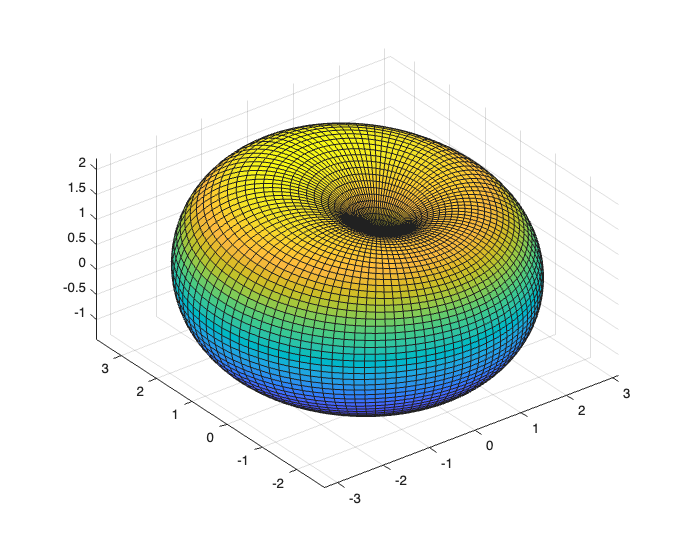}
    \par\smallskip
    {\scriptsize $f_{14}$}
\end{minipage}\hfill
\begin{minipage}[t]{0.19\textwidth}
    \centering
    \includegraphics[width=\linewidth]{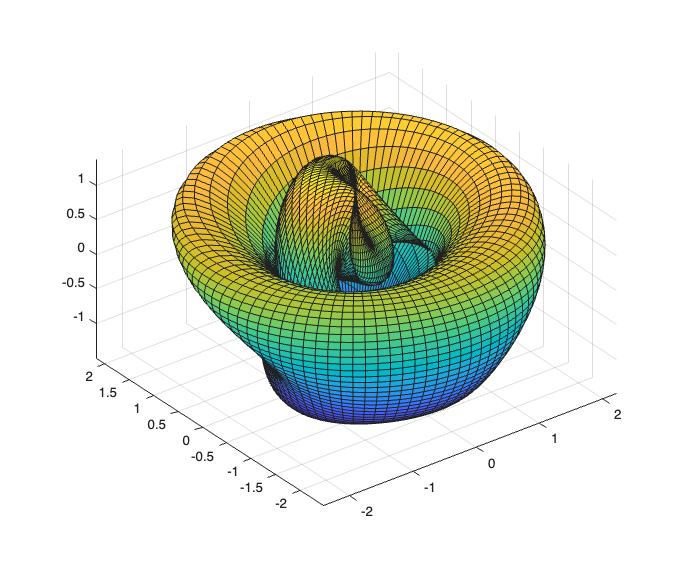}
    \par\smallskip
    {\scriptsize $f_{15}$}
\end{minipage}

\caption{Geometric visualization of the analytical test functions
$f_1,\ldots,f_{15}$ by deformation of the toroidal surface. The displacement
of the surface is proportional to the corresponding function value, thereby
highlighting the global shape, localized features, and oscillatory behaviour
of the test functions.}
\label{fig:test_functions_deformation}
\end{figure}

\begin{figure}[htbp]
\centering

\begin{minipage}[t]{0.19\textwidth}
    \centering
    \includegraphics[width=\linewidth]{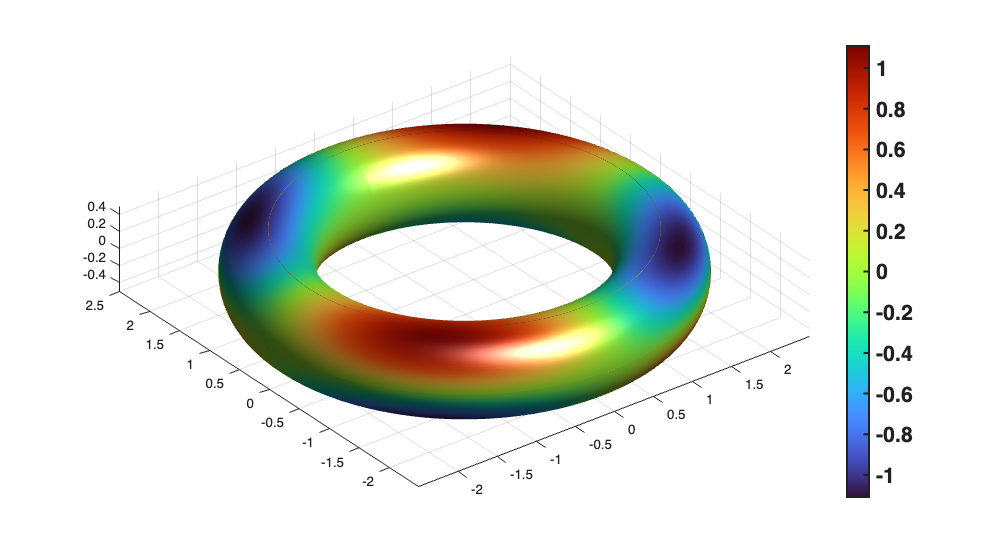}
    \par\smallskip
    {\scriptsize $f_1$}
\end{minipage}\hfill
\begin{minipage}[t]{0.19\textwidth}
    \centering
    \includegraphics[width=\linewidth]{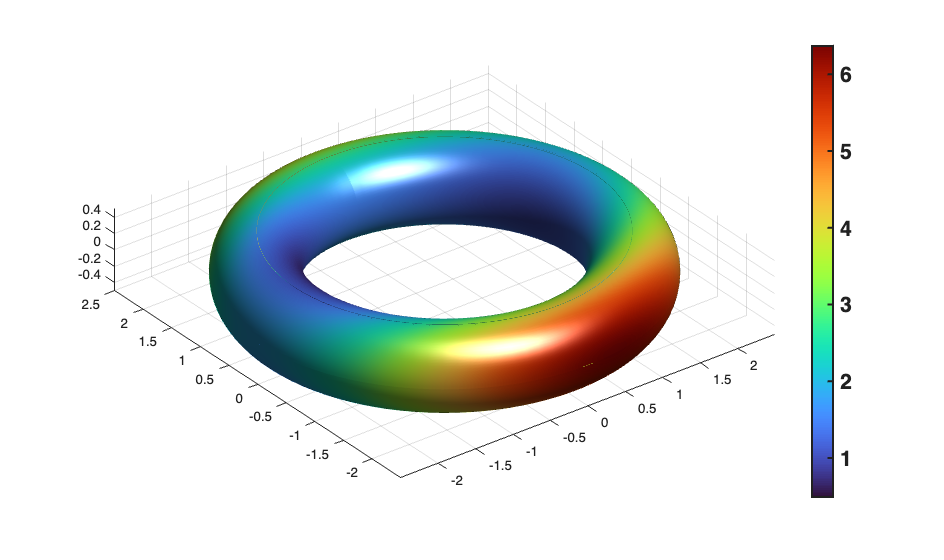}
    \par\smallskip
    {\scriptsize $f_2$}
\end{minipage}\hfill
\begin{minipage}[t]{0.19\textwidth}
    \centering
    \includegraphics[width=\linewidth]{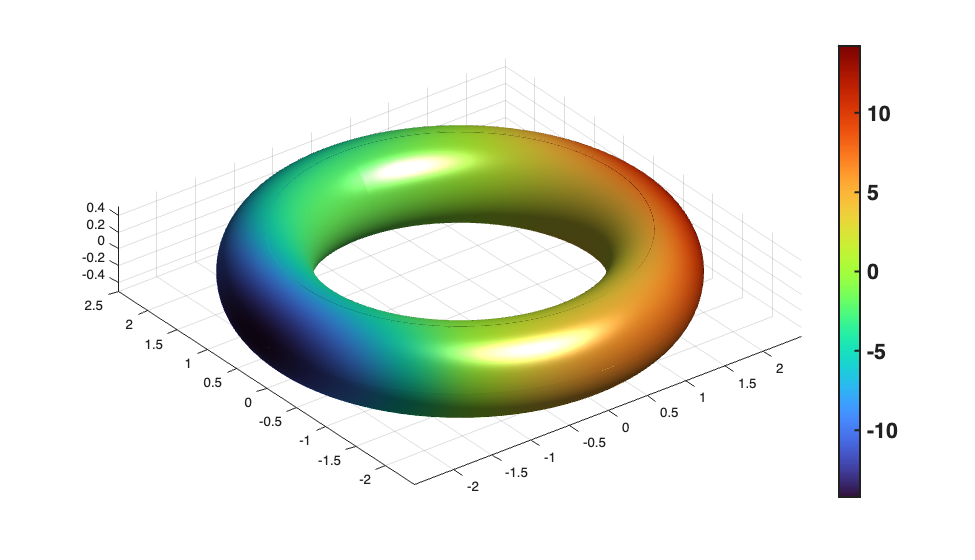}
    \par\smallskip
    {\scriptsize $f_3$}
\end{minipage}\hfill
\begin{minipage}[t]{0.19\textwidth}
    \centering
    \includegraphics[width=\linewidth]{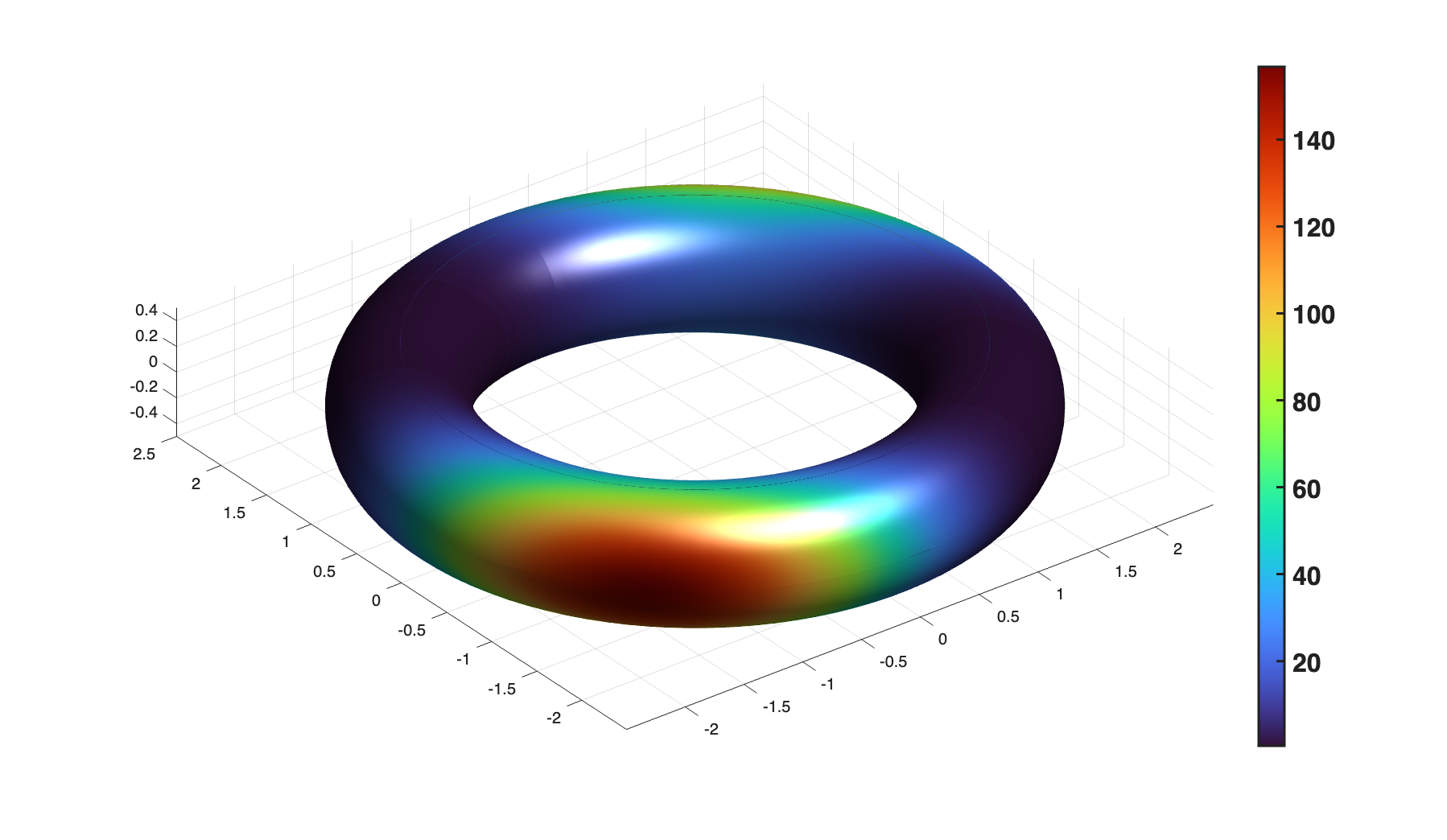}
    \par\smallskip
    {\scriptsize $f_4$}
\end{minipage}\hfill
\begin{minipage}[t]{0.19\textwidth}
    \centering
    \includegraphics[width=\linewidth]{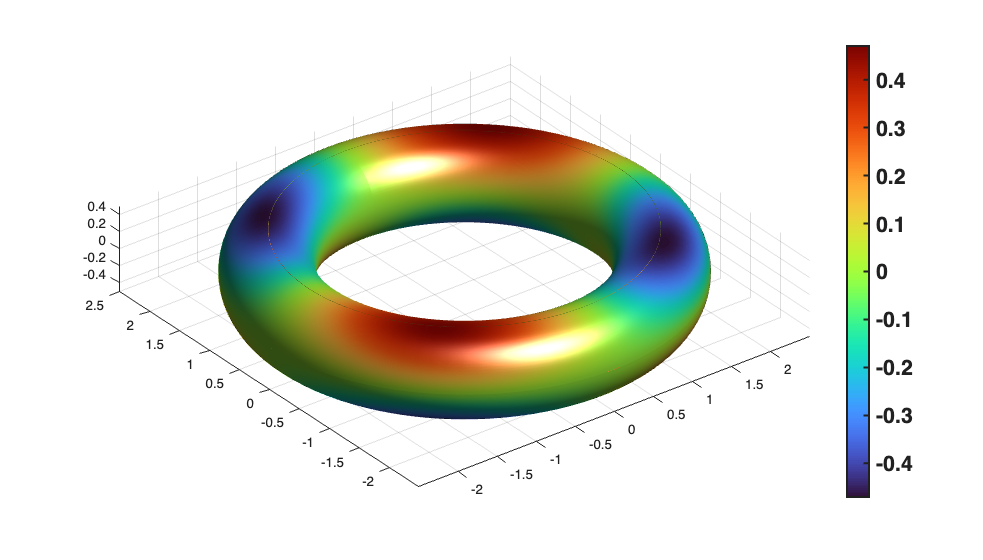}
    \par\smallskip
    {\scriptsize $f_5$}
\end{minipage}

\medskip

\begin{minipage}[t]{0.19\textwidth}
    \centering
    \includegraphics[width=\linewidth]{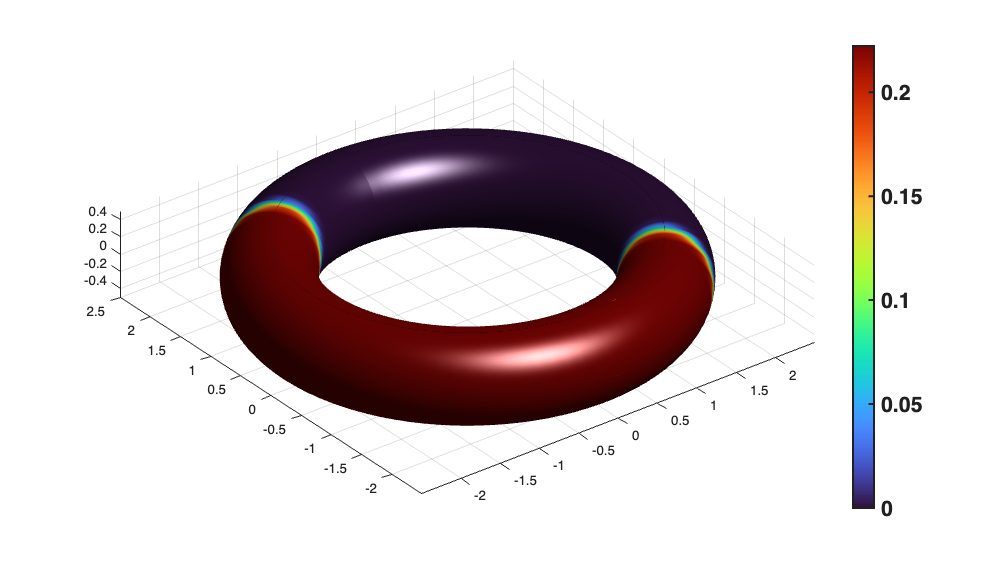}
    \par\smallskip
    {\scriptsize $f_6$}
\end{minipage}\hfill
\begin{minipage}[t]{0.19\textwidth}
    \centering
    \includegraphics[width=\linewidth]{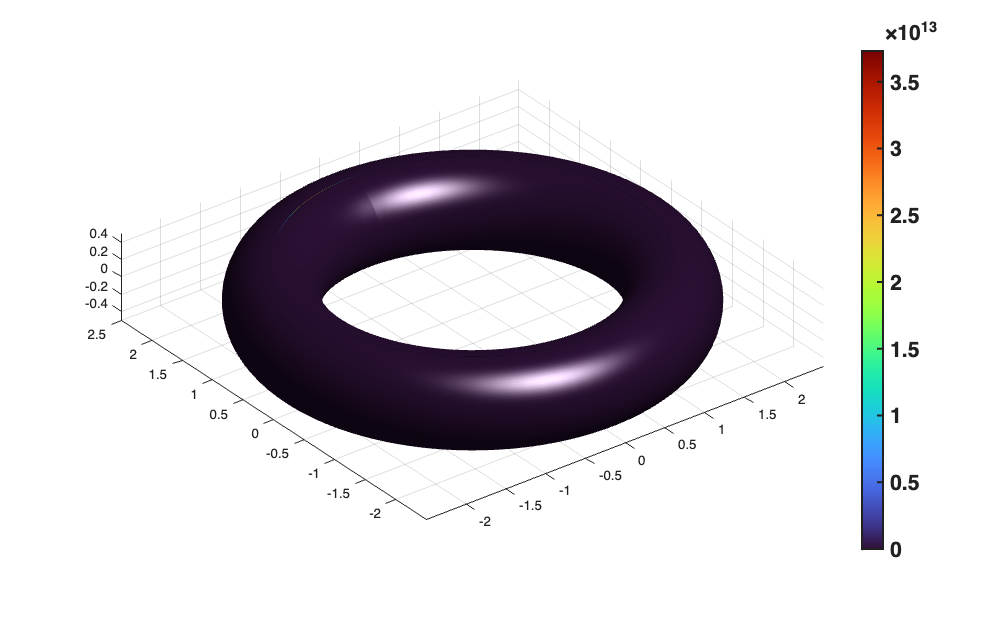}
    \par\smallskip
    {\scriptsize $f_7$}
\end{minipage}\hfill
\begin{minipage}[t]{0.19\textwidth}
    \centering
    \includegraphics[width=\linewidth]{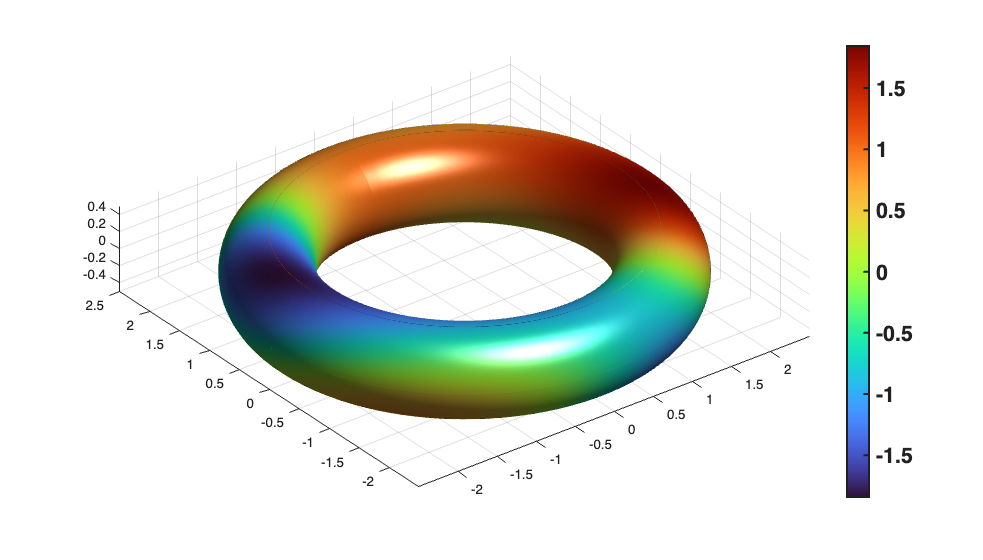}
    \par\smallskip
    {\scriptsize $f_8$}
\end{minipage}\hfill
\begin{minipage}[t]{0.19\textwidth}
    \centering
    \includegraphics[width=\linewidth]{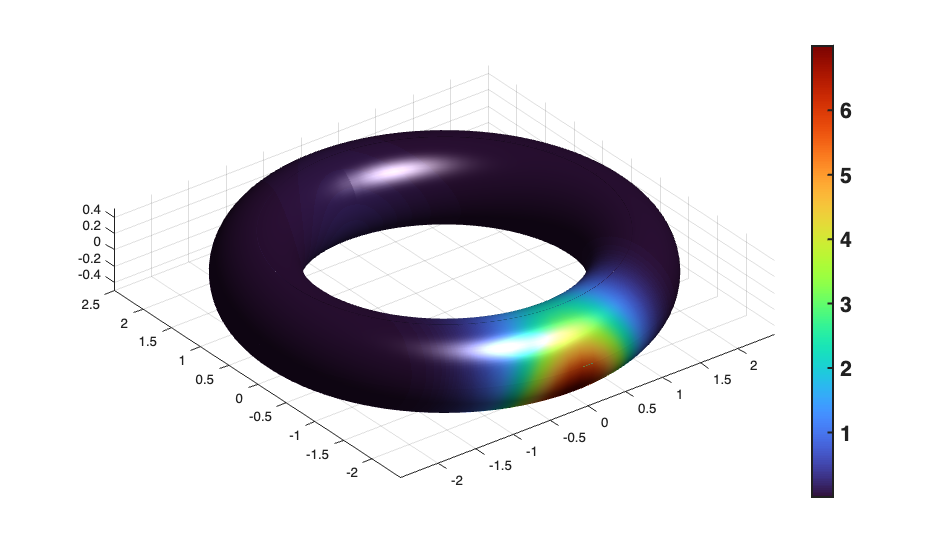}
    \par\smallskip
    {\scriptsize $f_9$}
\end{minipage}\hfill
\begin{minipage}[t]{0.19\textwidth}
    \centering
    \includegraphics[width=\linewidth]{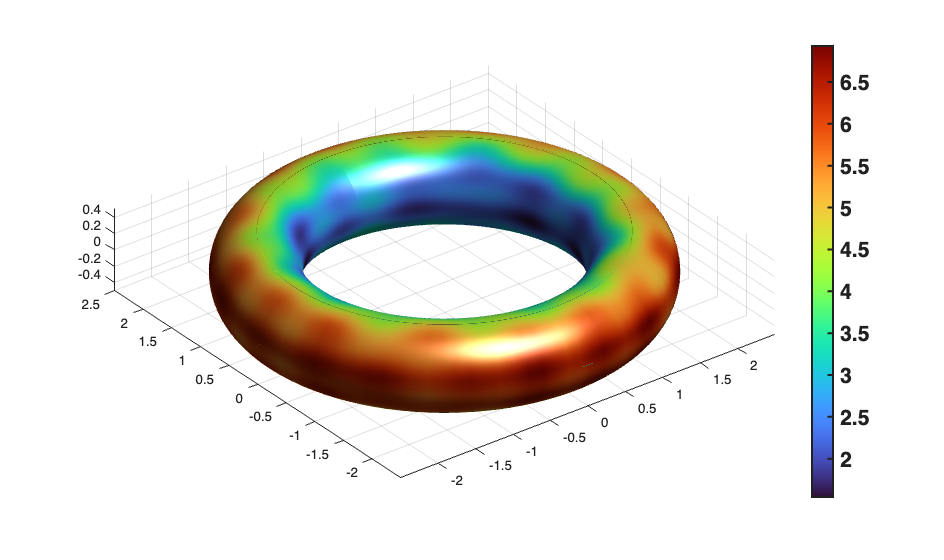}
    \par\smallskip
    {\scriptsize $f_{10}$}
\end{minipage}

\medskip

\begin{minipage}[t]{0.19\textwidth}
    \centering
    \includegraphics[width=\linewidth]{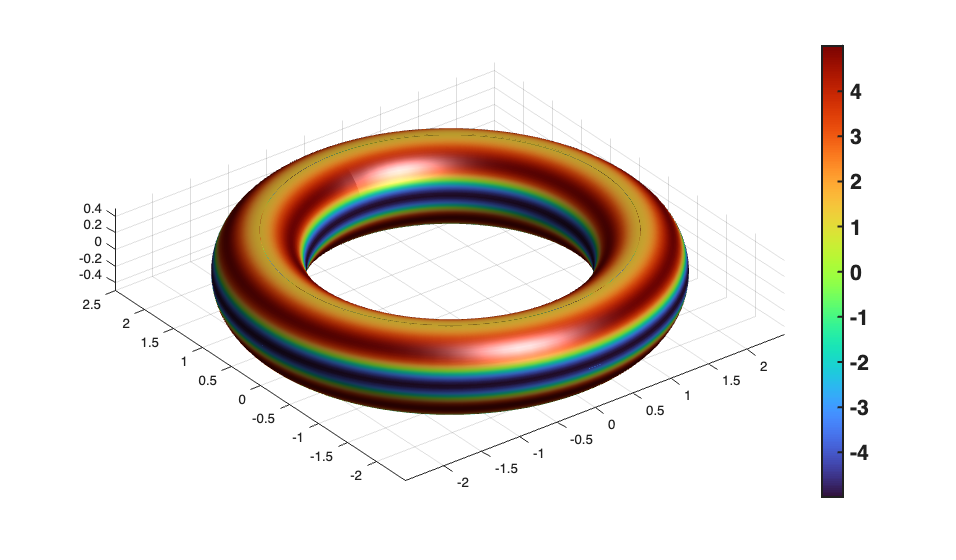}
    \par\smallskip
    {\scriptsize $f_{11}$}
\end{minipage}\hfill
\begin{minipage}[t]{0.19\textwidth}
    \centering
    \includegraphics[width=\linewidth]{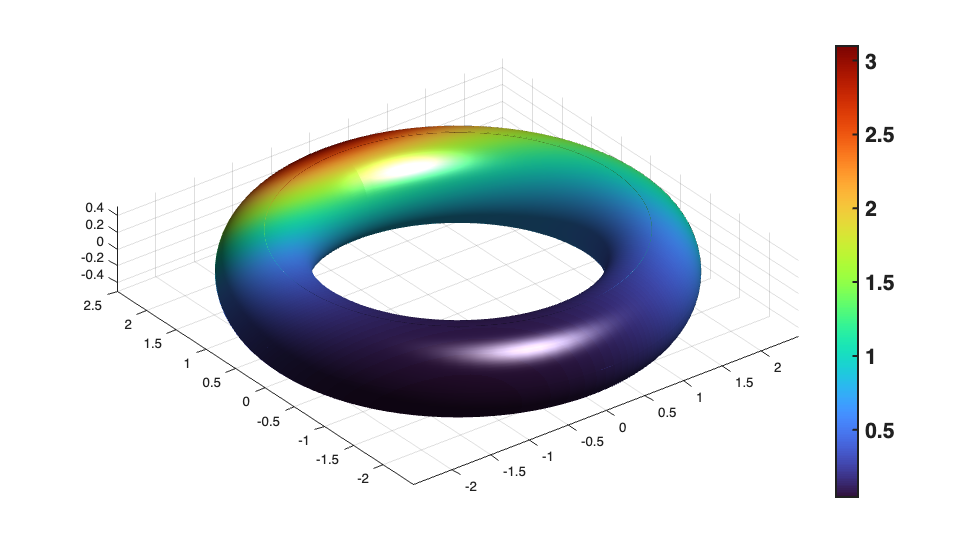}
    \par\smallskip
    {\scriptsize $f_{12}$}
\end{minipage}\hfill
\begin{minipage}[t]{0.19\textwidth}
    \centering
    \includegraphics[width=\linewidth]{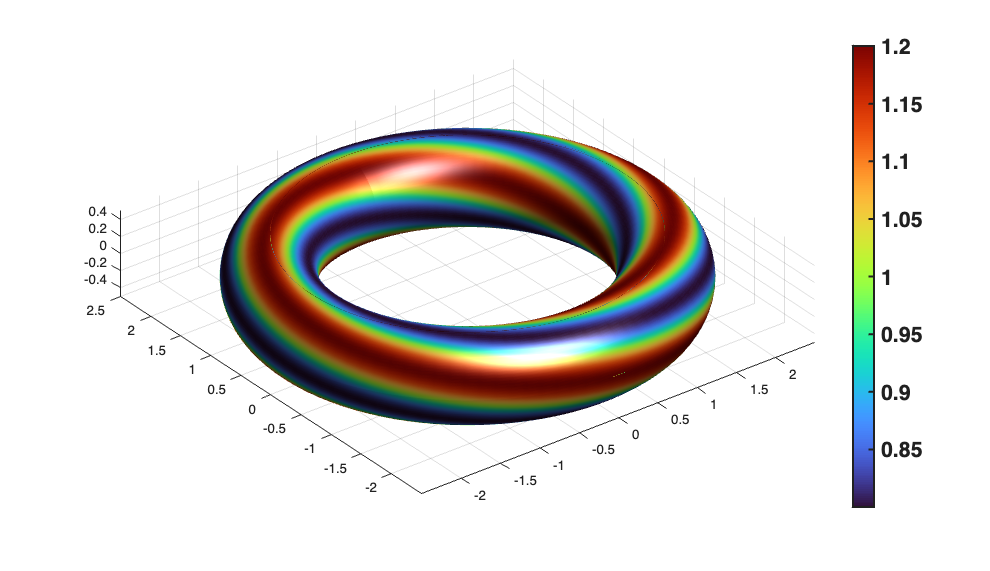}
    \par\smallskip
    {\scriptsize $f_{13}$}
\end{minipage}\hfill
\begin{minipage}[t]{0.19\textwidth}
    \centering
    \includegraphics[width=\linewidth]{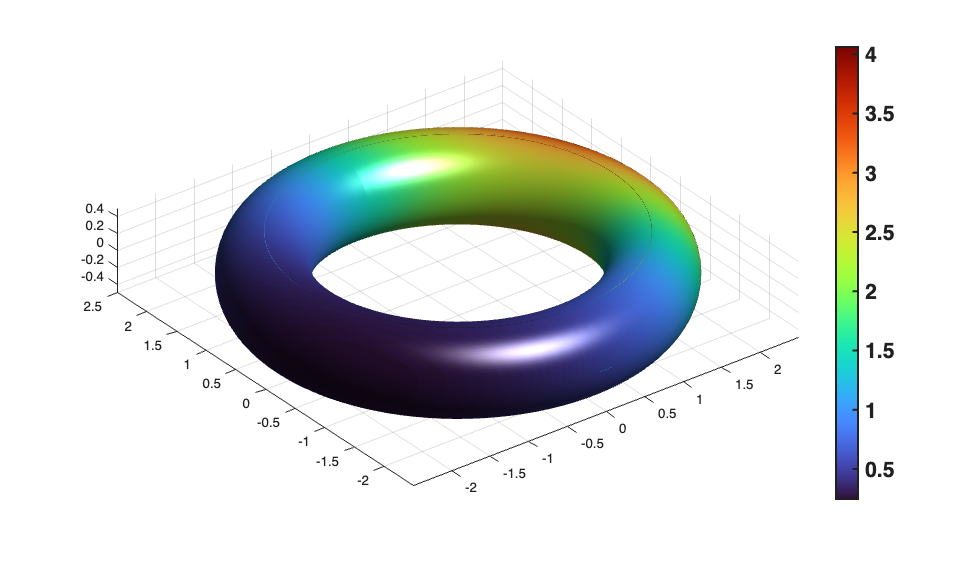}
    \par\smallskip
    {\scriptsize $f_{14}$}
\end{minipage}\hfill
\begin{minipage}[t]{0.19\textwidth}
    \centering
    \includegraphics[width=\linewidth]{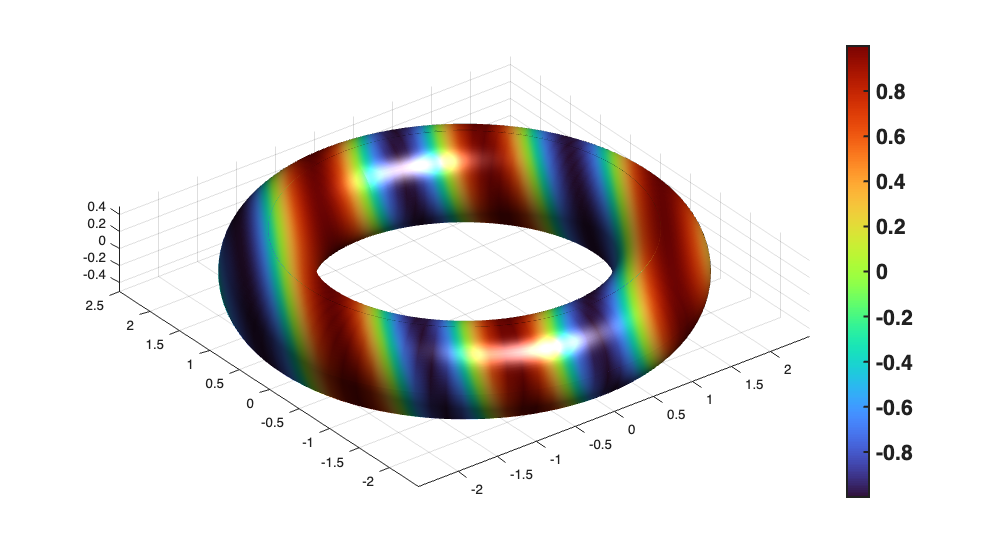}
    \par\smallskip
    {\scriptsize $f_{15}$}
\end{minipage}

\caption{Color-map visualization of the analytical test functions
$f_1,\ldots,f_{15}$ on the toroidal surface. The color scale encodes the
function values while preserving the original geometry of the torus, thereby
providing a complementary representation to the surface deformations shown in
Figure~\ref{fig:test_functions_deformation}.}
\label{fig:test_functions_colormap}
\end{figure}

\begin{figure}[htbp]
\centering
\begin{minipage}{0.48\textwidth}
\centering
\includegraphics[width=\textwidth]{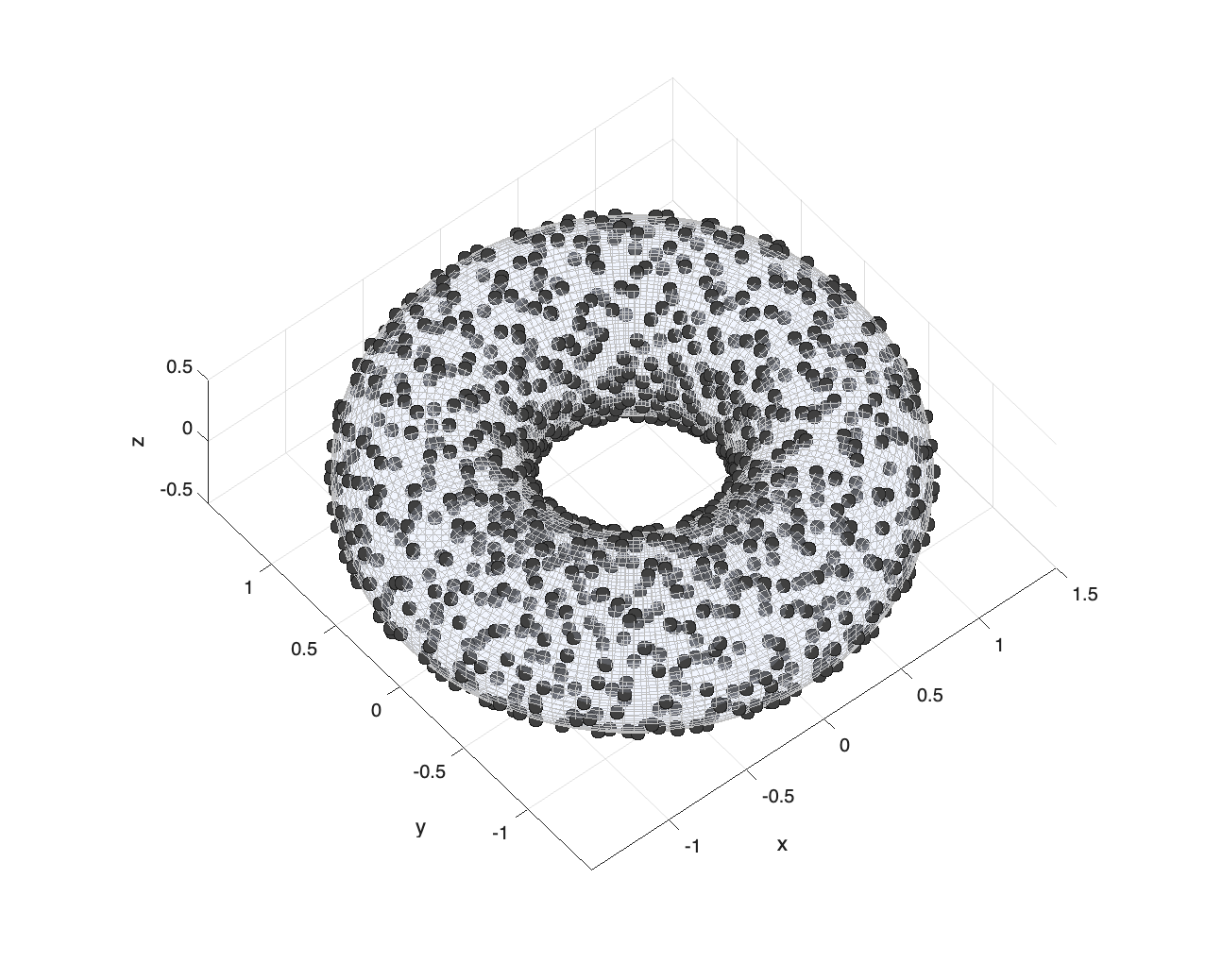}
\end{minipage}\hfill
\begin{minipage}{0.48\textwidth}
\centering
\includegraphics[width=\textwidth]{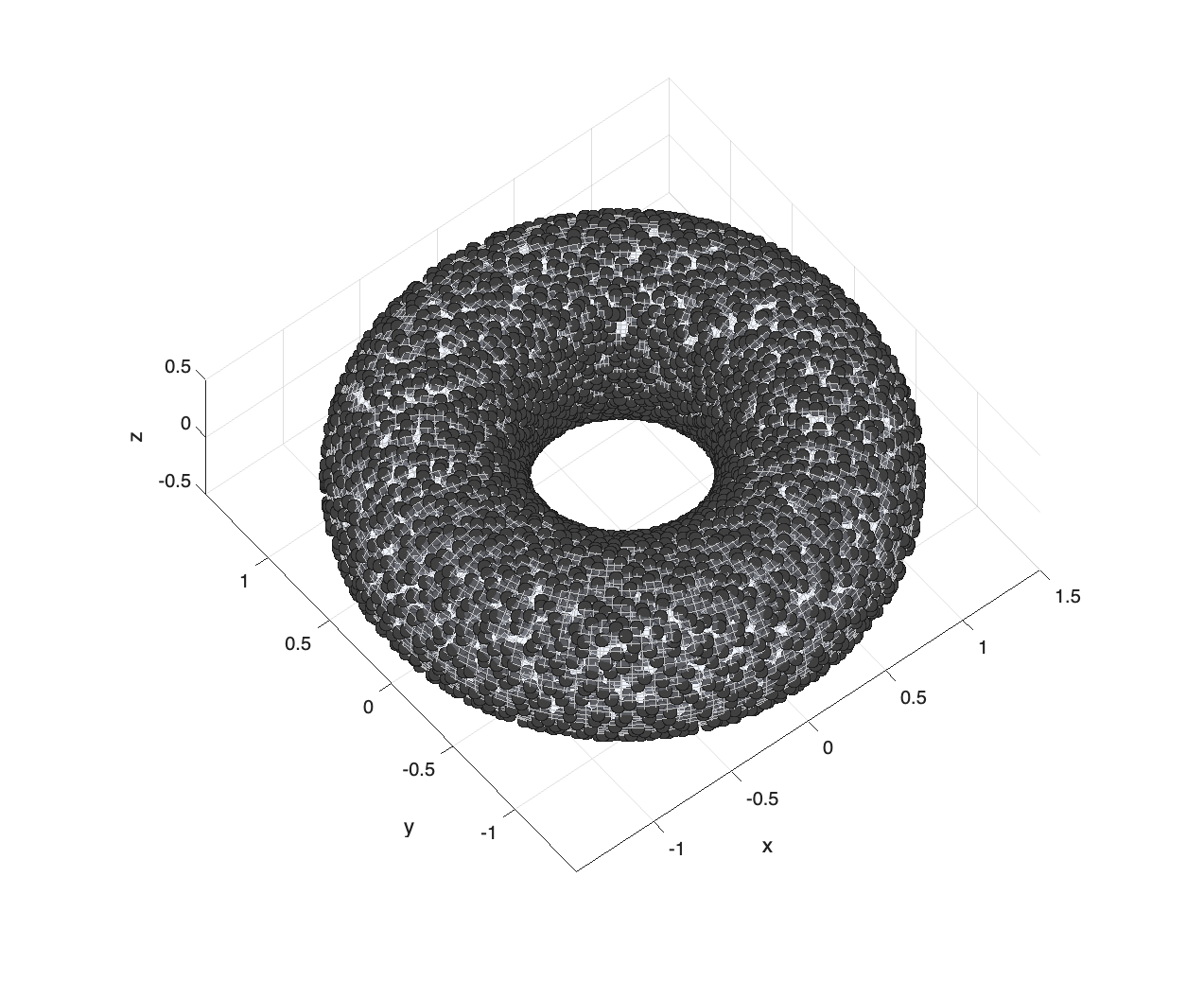}
\end{minipage}
\caption{Halton nodes with bases $2$ and $3$, generated in the periodic parameter domain and mapped onto the toroidal surface for $N=1000$ (left) and $N=4000$ (right).}
\label{fig:halton_nodes}
\end{figure}

\subsection{Varying the number of interpolation nodes}

In the first experiment, we investigate the influence of the node density on
the approximation accuracy of the compactly supported multinode Shepard operator. To this end, the
degree $d$ of the local polynomial space $\HH_d(\TT)$ is fixed at $d=4$, while $4$
Halton node sets of increasing cardinality,
$N=1000,\;4000,\;16000,\;64000,$
are considered.
Figure~\ref{fig:halton_nodes} shows the Halton nodes mapped onto the toroidal
surface for $N=1000$ and $N=4000$. As the number of nodes increases, the
sampling of the torus becomes progressively denser while preserving the
scattered, low-discrepancy character inherited from the periodic parameter domain.

The results, reported in Figure~\ref{fig:error_nodes}, show a systematic reduction of the interpolation errors as the number of nodes increases. This experiment records error decay under node refinement; a direct numerical measurement of the asymptotic order in Corollary~\ref{cor:fill-distance-convergence} would additionally require the corresponding fill distances and experimental orders of convergence. For the polynomial test functions, the errors reach
values close to machine precision whenever the fixed local polynomial space
contains the function being approximated, in agreement with the polynomial
reproduction property established in Section~\ref{sec:multinode-framework}. For the smooth
non-polynomial functions, the errors also decrease as the node distribution is
refined, although the observed rate depends on the oscillatory and localized
features of each test function.
For the test functions $f_{13}$--$f_{15}$, the errors are reported in Table~\ref{tab:toroidal_errors}. The mean error of $f_{13}$ decreases from $5.68e-4$ to $7.37e-10$, and similar reductions are observed for the Cartesian functions restricted to the torus. These results show that the method provides accurate approximations
both for intrinsically parametrized toroidal functions and for
Cartesian functions restricted from the ambient space.

\begin{figure}[htbp]
\centering
\begin{minipage}{0.3\textwidth}
    \centering
    \includegraphics[width=\textwidth]{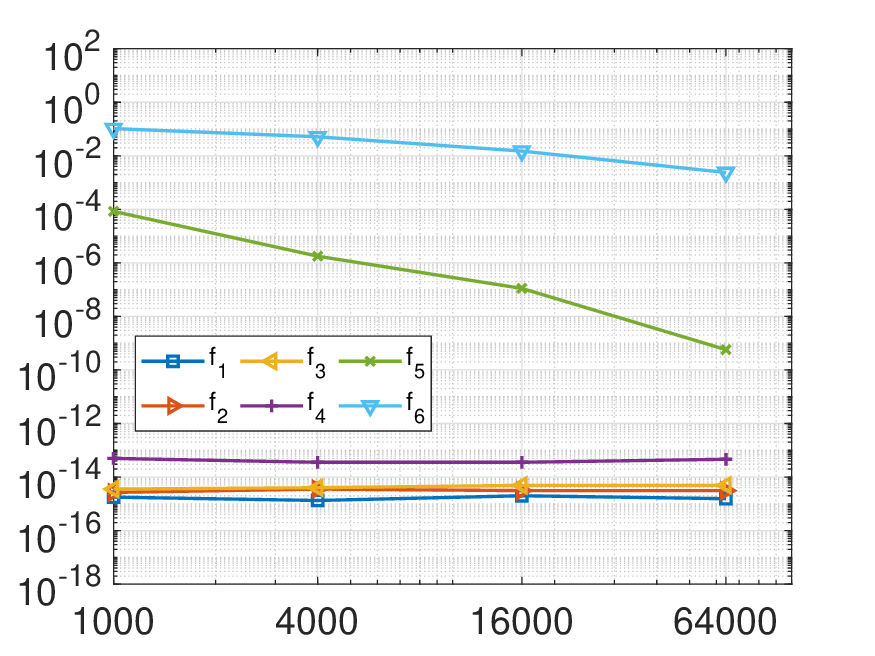}
    \par\smallskip
    $E_{\max}$
\end{minipage}
\hfill
\begin{minipage}{0.3\textwidth}
    \centering
    \includegraphics[width=\textwidth]{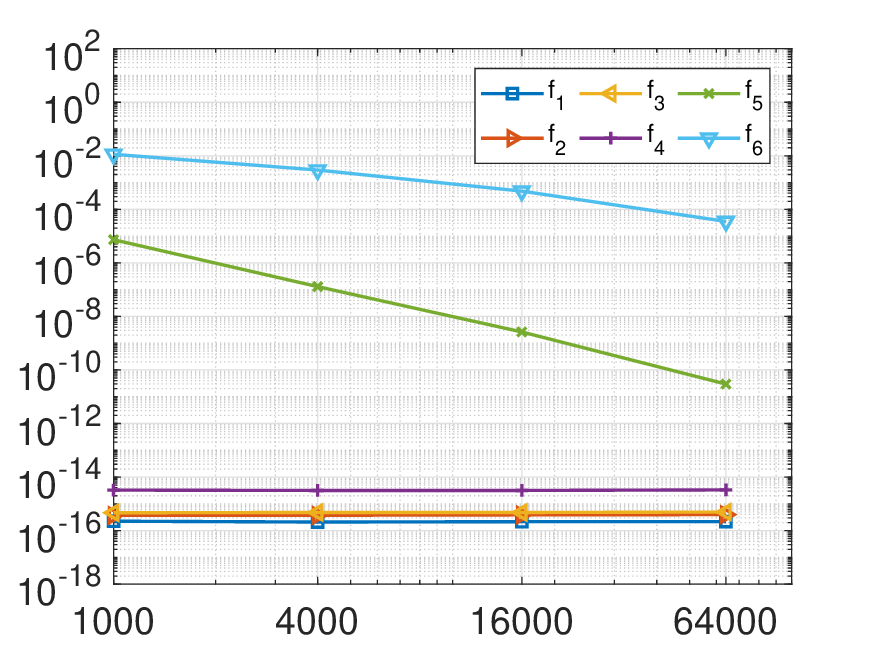}
    \par\smallskip
    $E_{\operatorname{mean}}$
\end{minipage}
\hfill
\begin{minipage}{0.3\textwidth}
    \centering
    \includegraphics[width=\textwidth]{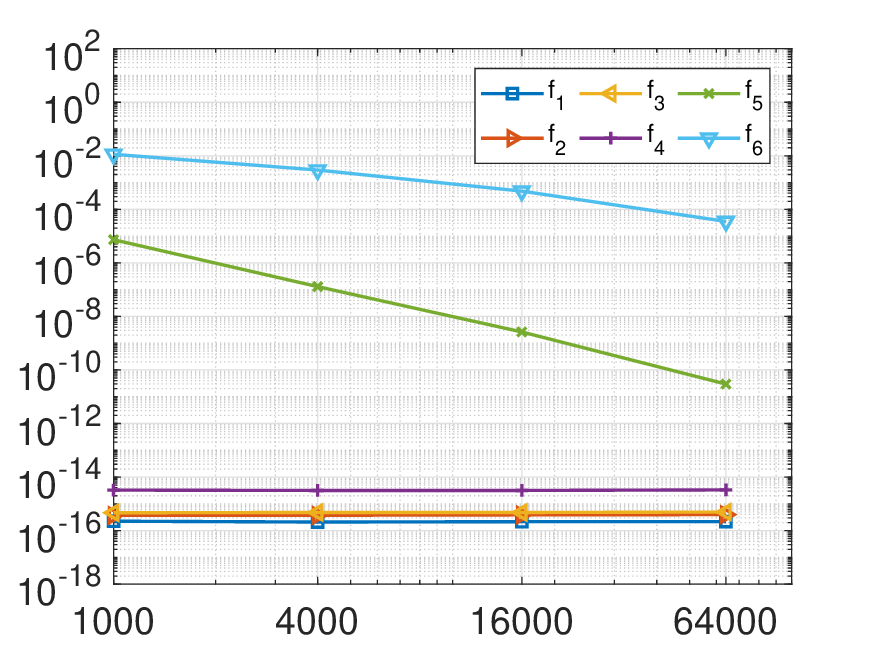}
    \par\smallskip
    $E_{\operatorname{RMS}}$
\end{minipage}

\vspace{0.7cm}

\begin{minipage}{0.3\textwidth}
    \centering
    \includegraphics[width=\textwidth]{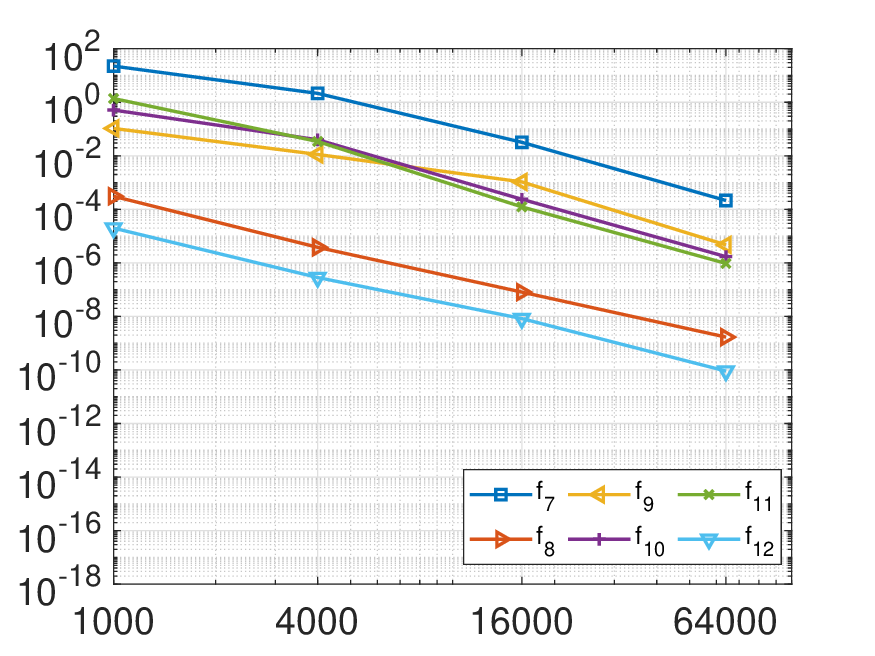}
    \par\smallskip
    $E_{\max}$
\end{minipage}
\hfill
\begin{minipage}{0.3\textwidth}
    \centering
    \includegraphics[width=\textwidth]{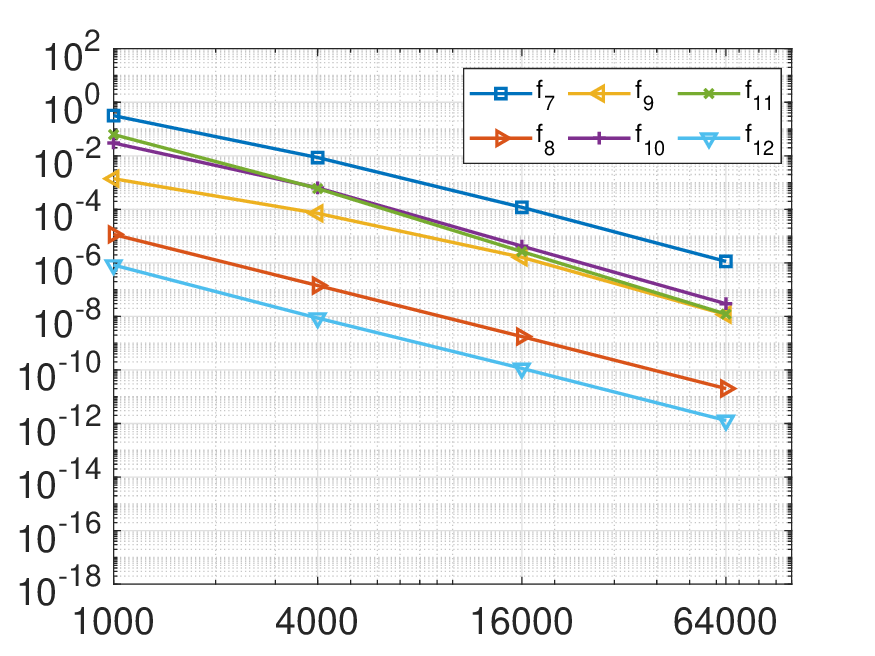}
    \par\smallskip
    $E_{\operatorname{mean}}$
\end{minipage}
\hfill
\begin{minipage}{0.3\textwidth}
    \centering
    \includegraphics[width=\textwidth]{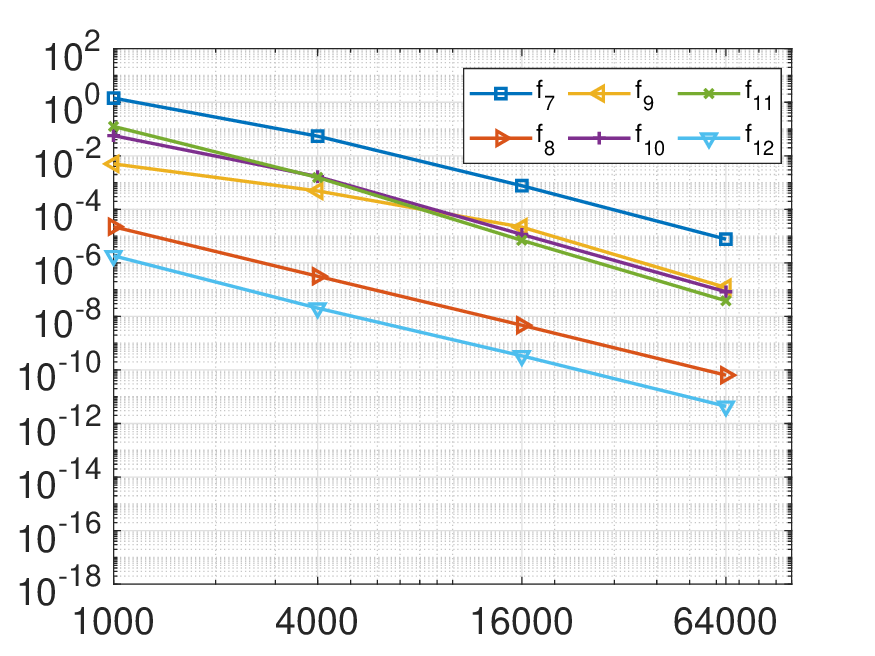}
    \par\smallskip
    $E_{\operatorname{RMS}}$
\end{minipage}

\caption{Maximum, mean and RMS errors for test functions
$f_1$--$f_{12}$ as the number of interpolation nodes increases.
The first row refers to $f_1$--$f_6$, while the second row refers to
$f_7$--$f_{12}$.}
\label{fig:error_nodes}
\end{figure}

\begin{table}[htbp]
\centering
\caption{Maximum, mean and RMS interpolation errors for the toroidal test functions using Halton nodes.}
\label{tab:toroidal_errors}
\renewcommand{\arraystretch}{1.15}
{\scriptsize
\begin{tabular}{ccccc}
\toprule
$N$ & Error & $f_{13}$ & $f_{14}$ & $f_{15}$ \\
\midrule
1000
& $E_{\max}$
& $7.1830\mathrm{e}{-03}$
& $1.5824\mathrm{e}{-06}$
& $7.8130\mathrm{e}{-03}$ \\
&
$E_{\operatorname{mean}}$
& $5.6814\mathrm{e}{-04}$
& $4.1570\mathrm{e}{-08}$
& $3.5813\mathrm{e}{-04}$ \\
&
$E_{\operatorname{RMS}}$
& $9.8138\mathrm{e}{-04}$
& $9.9274\mathrm{e}{-08}$
& $7.8055\mathrm{e}{-04}$ \\
\midrule
4000
& $E_{\max}$
& $1.9703\mathrm{e}{-04}$
& $2.6561\mathrm{e}{-08}$
& $4.7358\mathrm{e}{-04}$ \\
&
$E_{\operatorname{mean}}$
& $6.8079\mathrm{e}{-06}$
& $4.4516\mathrm{e}{-10}$
& $6.5464\mathrm{e}{-06}$ \\
&
$E_{\operatorname{RMS}}$
& $1.4134\mathrm{e}{-05}$
& $1.2150\mathrm{e}{-09}$
& $2.2069\mathrm{e}{-05}$ \\
\midrule
16000
& $E_{\max}$
& $4.2509\mathrm{e}{-06}$
& $6.2795\mathrm{e}{-10}$
& $6.6096\mathrm{e}{-06}$ \\
&
$E_{\operatorname{mean}}$
& $8.0788\mathrm{e}{-08}$
& $5.8819\mathrm{e}{-12}$
& $8.7574\mathrm{e}{-08}$ \\
&
$E_{\operatorname{RMS}}$
& $2.0435\mathrm{e}{-07}$
& $1.9284\mathrm{e}{-11}$
& $3.1834\mathrm{e}{-07}$ \\
\midrule
64000
& $E_{\max}$
& $4.9854\mathrm{e}{-08}$
& $5.6213\mathrm{e}{-12}$
& $1.9147\mathrm{e}{-07}$ \\
&
$E_{\operatorname{mean}}$
& $7.3710\mathrm{e}{-10}$
& $6.0890\mathrm{e}{-14}$
& $1.0037\mathrm{e}{-09}$ \\
&
$E_{\operatorname{RMS}}$
& $2.0609\mathrm{e}{-09}$
& $2.1468\mathrm{e}{-13}$
& $4.7687\mathrm{e}{-09}$ \\
\bottomrule
\end{tabular}
}
\end{table}

\subsection{Varying the degree of the local polynomial space}

The second experiment investigates the influence of the local polynomial degree
on the approximation accuracy of the compactly supported multinode Shepard operator. In this case, the
number of interpolation nodes is fixed at $N=64000$, while the degree of the
local polynomial space $\HH_d(\TT)$ varies from $d=1$ to $d=6$.
 The
corresponding results are shown in Figure~\ref{fig:error_degree} and Table~\ref{tab:toroidal_errors_degree}.

As expected, enriching the local approximation space generally leads to a
progressive reduction of the interpolation errors. In particular, polynomial
test functions are reproduced up to machine precision as soon as the local
polynomial space contains the target function, thereby confirming the
theoretical polynomial reproduction property also in this setting. For the remaining smooth
non-polynomial functions, increasing the polynomial degree generally
improves the approximation, although the amount of improvement depends on the
regularity and complexity of the function under consideration.

\begin{figure}[htbp]
\centering

\begin{minipage}{0.3\textwidth}
    \centering
    \includegraphics[width=\textwidth]{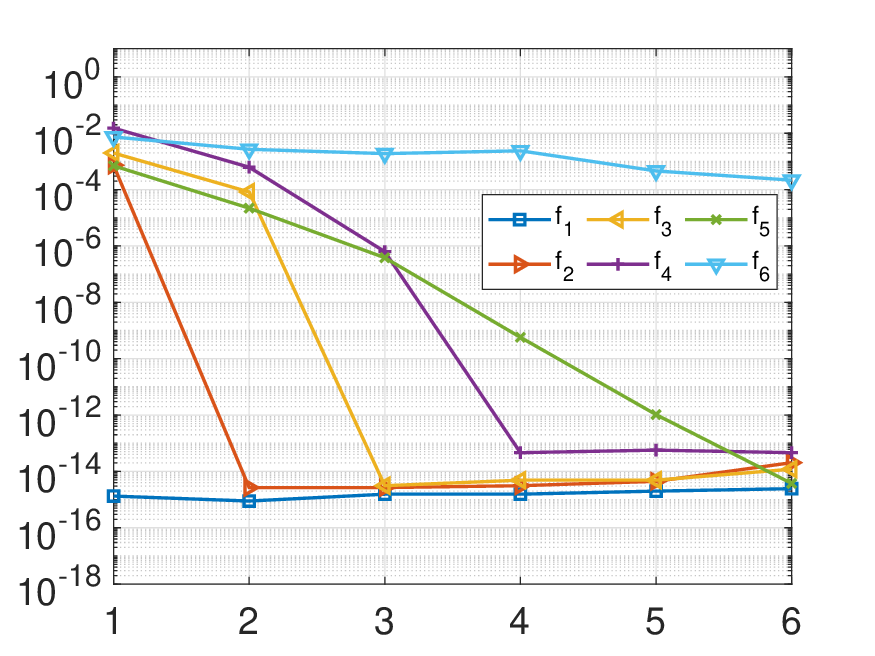}
    \par\smallskip
    $E_{\max}$
\end{minipage}
\hfill
\begin{minipage}{0.3\textwidth}
    \centering
    \includegraphics[width=\textwidth]{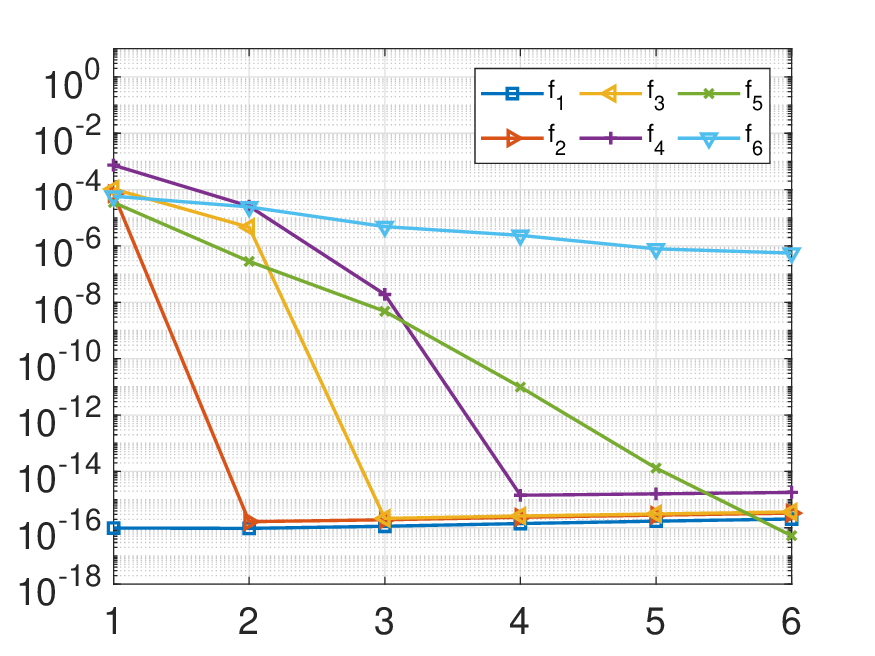}
    \par\smallskip
    $E_{\operatorname{mean}}$
\end{minipage}
\hfill
\begin{minipage}{0.3\textwidth}
    \centering
    \includegraphics[width=\textwidth]{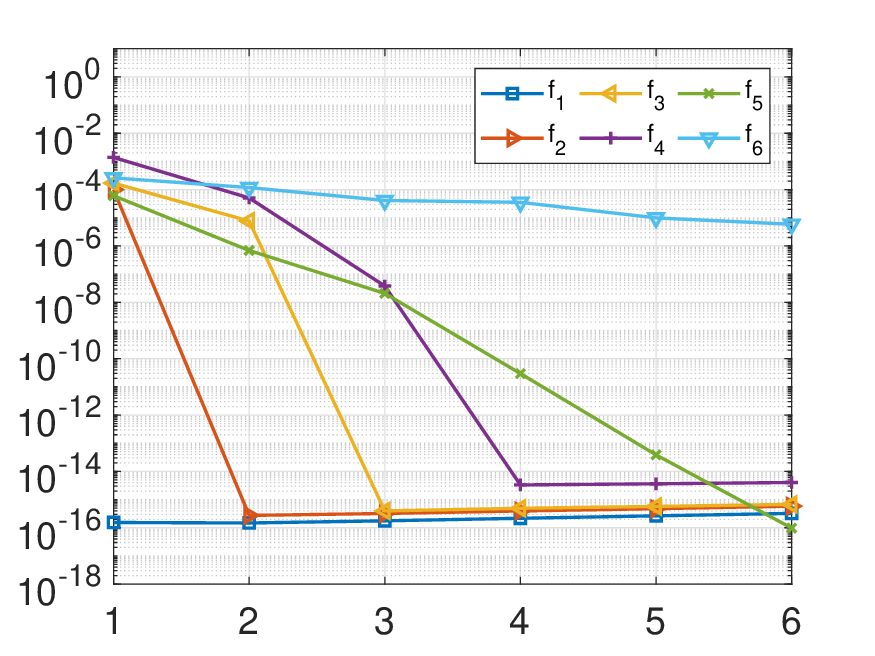}
    \par\smallskip
    $E_{\operatorname{RMS}}$
\end{minipage}

\vspace{0.7cm}

\begin{minipage}{0.3\textwidth}
    \centering
    \includegraphics[width=\textwidth]{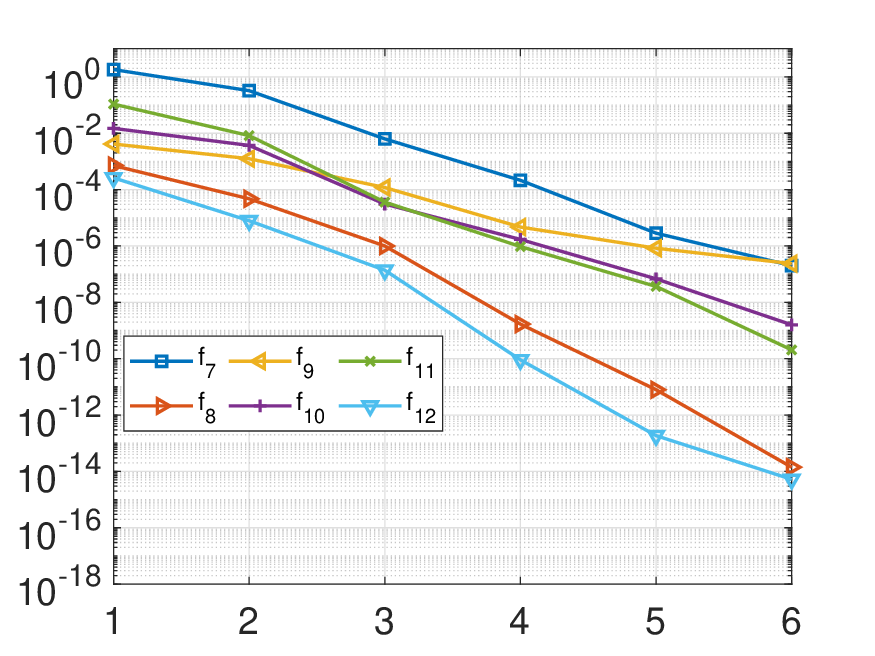}
    \par\smallskip
    $E_{\max}$
\end{minipage}
\hfill
\begin{minipage}{0.3\textwidth}
    \centering
    \includegraphics[width=\textwidth]{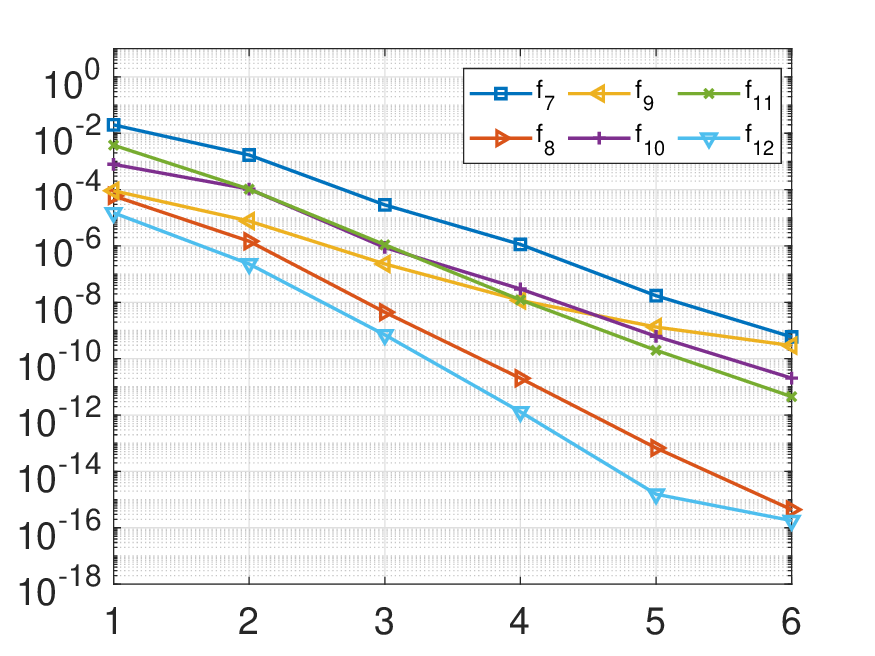}
    \par\smallskip
    $E_{\operatorname{mean}}$
\end{minipage}
\hfill
\begin{minipage}{0.3\textwidth}
    \centering
    \includegraphics[width=\textwidth]{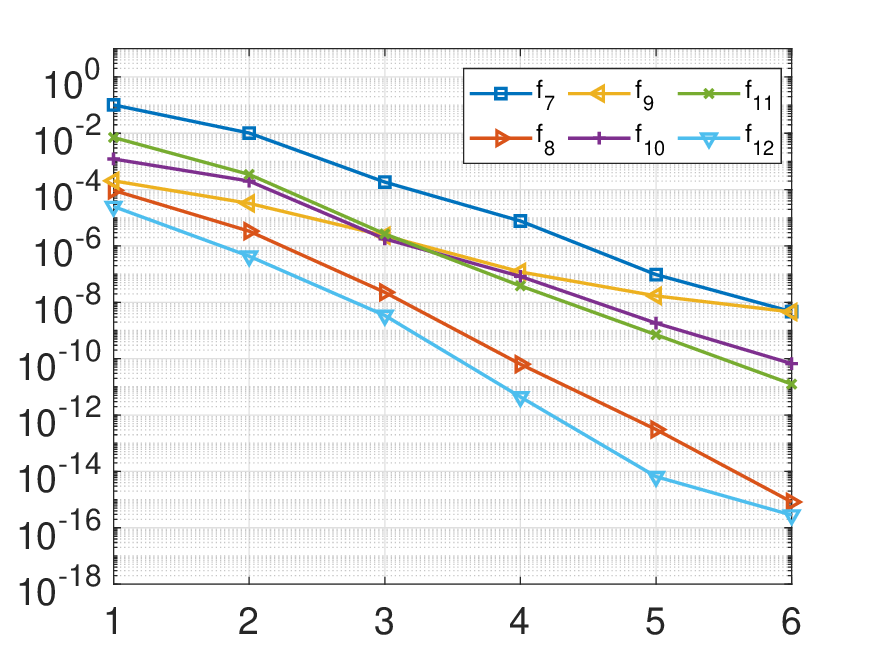}
    \par\smallskip
    $E_{\operatorname{RMS}}$
\end{minipage}

\caption{Maximum, mean and RMS errors for test functions
$f_1$--$f_{12}$ as the degree $d$ of $\HH_d(\TT)$ increases,
with fixed $N=64000$. The first row refers to $f_1$--$f_6$,
while the second row refers to $f_7$--$f_{12}$.}
\label{fig:error_degree}
\end{figure}

\begin{table}[htbp]
\centering
\caption{Maximum, mean and RMS interpolation errors for the toroidal test functions with $N=64000$ Halton nodes and varying degree.}
\label{tab:toroidal_errors_degree}
\renewcommand{\arraystretch}{1.15}
{\scriptsize
\begin{tabular}{ccccc}
\toprule
$d$ & Error & $f_{13}$ & $f_{14}$ & $f_{15}$ \\
\midrule
1
& $E_{\max}$
& $3.8013\mathrm{e}{-03}$
& $2.1418\mathrm{e}{-04}$
& $7.7137\mathrm{e}{-03}$ \\
&
$E_{\operatorname{mean}}$
& $1.6580\mathrm{e}{-04}$
& $1.2218\mathrm{e}{-05}$
& $3.1304\mathrm{e}{-04}$ \\
&
$E_{\operatorname{RMS}}$
& $2.4671\mathrm{e}{-04}$
& $1.9907\mathrm{e}{-05}$
& $5.4374\mathrm{e}{-04}$ \\
\midrule
2
& $E_{\max}$
& $1.7793\mathrm{e}{-04}$
& $2.8392\mathrm{e}{-06}$
& $7.4520\mathrm{e}{-04}$ \\
&
$E_{\operatorname{mean}}$
& $1.3554\mathrm{e}{-05}$
& $1.1558\mathrm{e}{-07}$
& $1.8794\mathrm{e}{-05}$ \\
&
$E_{\operatorname{RMS}}$
& $2.0555\mathrm{e}{-05}$
& $2.3585\mathrm{e}{-07}$
& $4.3459\mathrm{e}{-05}$ \\
\midrule
3
& $E_{\max}$
& $1.6103\mathrm{e}{-05}$
& $7.5761\mathrm{e}{-09}$
& $4.3644\mathrm{e}{-05}$ \\
&
$E_{\operatorname{mean}}$
& $1.4977\mathrm{e}{-07}$
& $7.7848\mathrm{e}{-11}$
& $1.3168\mathrm{e}{-07}$ \\
&
$E_{\operatorname{RMS}}$
& $4.8508\mathrm{e}{-07}$
& $3.5652\mathrm{e}{-10}$
& $7.7026\mathrm{e}{-07}$ \\
\midrule
4
& $E_{\max}$
& $4.9854\mathrm{e}{-08}$
& $5.6213\mathrm{e}{-12}$
& $1.9147\mathrm{e}{-07}$ \\
&
$E_{\operatorname{mean}}$
& $7.3710\mathrm{e}{-10}$
& $6.0890\mathrm{e}{-14}$
& $1.0037\mathrm{e}{-09}$ \\
&
$E_{\operatorname{RMS}}$
& $2.0609\mathrm{e}{-09}$
& $2.1468\mathrm{e}{-13}$
& $4.7687\mathrm{e}{-09}$ \\
\midrule
5
& $E_{\max}$
& $3.7783\mathrm{e}{-10}$
& $4.4409\mathrm{e}{-15}$
& $7.8785\mathrm{e}{-10}$ \\
&
$E_{\operatorname{mean}}$
& $3.7206\mathrm{e}{-12}$
& $4.1177\mathrm{e}{-16}$
& $4.6280\mathrm{e}{-12}$ \\
&
$E_{\operatorname{RMS}}$
& $1.1353\mathrm{e}{-11}$
& $5.8260\mathrm{e}{-16}$
& $2.1505\mathrm{e}{-11}$ \\
\midrule
6
& $E_{\max}$
& $2.6190\mathrm{e}{-12}$
& $5.7732\mathrm{e}{-15}$
& $3.5354\mathrm{e}{-12}$ \\
&
$E_{\operatorname{mean}}$
& $3.5166\mathrm{e}{-14}$
& $4.7784\mathrm{e}{-16}$
& $2.9345\mathrm{e}{-14}$ \\
&
$E_{\operatorname{RMS}}$
& $1.0124\mathrm{e}{-13}$
& $6.9030\mathrm{e}{-16}$
& $1.3149\mathrm{e}{-13}$ \\
\bottomrule
\end{tabular}
}
\end{table}

\FloatBarrier

\subsection{High-degree conditioning and sensitivity diagnostics}
\label{sec:high-degree-diagnostics}

The very small errors reported in Figure~\ref{fig:error_degree} and
Table~\ref{tab:toroidal_errors_degree} for $d=5$ and $d=6$ coexist with
increasingly ill-conditioned local coefficient systems. We therefore
performed a separate a posteriori diagnostic using exactly the same
$N=64000$ Halton nodes, the same candidate neighbourhoods, and the same
stencils selected by the original $PA=LU$ procedure. 

It is important to distinguish the two Vandermonde matrices occurring in
the implementation. The rectangular candidate matrix used for the LU-based
selection is locally centred but is not scaled. After a stencil
$\sigma_j=\{\bs{x}_{j_1},\ldots,\bs{x}_{j_{m_d}}\}$ has been selected, we set
\[
 \overline{\bs{x}}_j
 =\frac{1}{m_d}\sum_{i=1}^{m_d}\bs{x}_{j_i},
 \qquad
 \delta_j
 =\max_{1\leq i\leq m_d}
   \norm{\bs{x}_{j_i}-\overline{\bs{x}}_j}_2,
\]
and construct the final square matrix in the locally centred and
isotropically scaled coordinates
\[
 \widehat{\bs{x}}_{j,i}
 =\frac{\bs{x}_{j_i}-\overline{\bs{x}}_j}{\delta_j},
 \qquad
 (V_j)_{ik}=\beta_k(\widehat{\bs{x}}_{j_i}).
\]
All reciprocal condition estimates reported below refer to this final square
matrix $V_j$, not to the rectangular candidate matrix.

For each stencil, we measured the relative $\Pi V =LU$ factorization residual
\[
 r_j^{\rm LU}
 =\frac{\norm{\Pi_jV_j-L_jU_j}_{\infty}}
        {\norm{V_j}_{\infty}}
\]
and, for the test functions $f_{\nu}$, $\nu=1,\dots,15$, the largest normwise backward error
\[
 \eta_j
 =\max_{1\leq \nu\leq 15}
 \frac{\norm{V_j\bs{c}_{\nu,j}-\bs{f}_{\nu,j}}_{\infty}}
 {\norm{V_j}_{\infty}\norm{\bs{c}_{\nu,j}}_{\infty}
  +\norm{\bs{f}_{\nu,j}}_{\infty}}.
\]
The principal results are summarized in Table~\ref{tab:high-degree-diagnostics}.
Here $\varepsilon_{\rm mach}=\mathrm{eps}$ denotes the double-precision machine epsilon used by MATLAB.
The quantity $E_{\rm rep}^{\max}$ is the largest error obtained by reproducing,
on candidate points not belonging to the selected stencil, every polynomial function of
the complete local reduced basis and twelve normalized linear combinations of
that basis, after scaling each test to unit local $\ell^\infty$ amplitude. Moreover, for a local candidate set $Y_j$ we define the sampled
Lebesgue factor
\[
 \widehat\lambda_j
 =\max_{\bs{\xi}\in Y_j\setminus\sigma_j}
   \sum_{k=1}^{m_d}\abs{\ell_{j,k}(\bs{\xi})}.
\]
This is a discrete diagnostic and must not be identified with the continuous
factor $\Lambda_h$ in~\eqref{eq:Lambda-h}.

The reciprocal condition estimates show that the monomial coefficient
representation becomes particularly sensitive for $d=6$. Nevertheless, the LU
factorization residuals and the normwise backward errors remain at the level of
double-precision roundoff. Thus the linear systems are solved with small
backward errors, although their coefficient vectors may be highly sensitive to
perturbations.

To assess the local interpolants rather than only their coefficient systems, we
next tested exact reproduction away from the interpolation nodes. For every
stencil, all $m_d$ functions of the local reduced basis, together with twelve
normalized linear combinations, were interpolated on $\sigma_j$ and evaluated
on $Y_j\setminus\sigma_j$. The maximum errors in
Table~\ref{tab:high-degree-diagnostics} remain of order $10^{-15}$ for both
degrees. Hence the complete high-degree spaces used in the computation are
reproduced to essentially machine precision on these independent local test
points.

Finally, the data of $f_{13}$, $f_{14}$, and $f_{15}$ were perturbed by a vector
$\bs{\eta}$ and the observed local amplification was measured by
\[
 \mathcal A_j(\bs{\eta})
 =\frac{
 \norm{P_j[\bs{f}+\bs{\eta}]-P_j[\bs{f}]}_{\ell^\infty(Y_j\setminus\sigma_j)}}
 {\norm{\bs{\eta}}_{\ell^\infty(\sigma_j)}}.
\]
The statistics for relative perturbations of size $10^{-12}$ are reported in
Table~\ref{tab:high-degree-diagnostics}. At this level, and also at level
$10^{-10}$, all observed amplification factors remained below the corresponding
sampled Lebesgue factors. Repeating the test at level $10^{-10}$
produced essentially unchanged medians and $95$th percentiles; the corresponding
maxima were $12.08$ for $d=5$ and $21.60$ for $d=6$. Perturbations of size
$10^{-14}$ lie too close to the floating-point error floor to provide a reliable
worst-case amplification measurement and are therefore not used in the
interpretation.

Additional experiments compared the original representation with
column equilibration, a standard diagonal scaling technique for linear
systems \cite{CurtisReid1972,Higham2002}, and with the Newton-like
representation naturally associated with the LU factorization at
discrete Leja points \cite{Bos,BosNewton2011}. We also compared
centered candidate Vandermonde matrices with and without isotropic
coordinate scaling, while retaining LU row pivoting as the
stencil-selection mechanism \cite{Bos,DellAccioDiTommasoSiar2021}.
None of these variants produced a systematic improvement in the final
interpolation accuracy or in the condition distribution of the selected
square systems. We therefore retain the original $PA=LU$ construction. The
present diagnostics do not prove a uniform high-degree stability bound; rather,
they show that, for the smooth data considered here, very small basis-dependent
reciprocal condition estimates coexist with machine-precision reproduction and
moderate observed amplification. At the operator level, the sampled Lebesgue
factor is consequently the more pertinent quantity for interpreting sensitivity
to data perturbations.

\begin{table}[htbp]
\centering
\caption{High-degree diagnostics for the original $PA=LU$ implementation.
Panel (a) concerns the final square, centred and scaled Vandermonde matrices.
Panel (b) reports operator-level diagnostics on local candidate points. The
noise-amplification statistics correspond to relative perturbations of size
$10^{-12}$; the median and $95$th-percentile entries give the ranges over
$f_{13}$--$f_{15}$, while the maximum is taken over all three functions, all
stencils, and five perturbation realizations.}
\label{tab:high-degree-diagnostics}

{\scriptsize
\setlength{\tabcolsep}{3.5pt}
\renewcommand{\arraystretch}{1.15}

\textbf{(a) Local coefficient systems}
\par\smallskip

\begin{adjustbox}{max width=\textwidth}
\begin{tabular}{ccccccc}
\toprule
$d$
& $m_d$
& $L$
& $\operatorname{median}\operatorname{rcond}(V_j)$
& $\operatorname{rcond}(V_j)<\varepsilon_{\rm mach}$
& $\max_j r_j^{\rm LU}$
& $\max_j \eta_j$ \\
\midrule
5
& 52
& 4982
& $4.29\mathrm{e}{-15}$
& $1.63\%$
& $2.07\mathrm{e}{-16}$
& $3.33\mathrm{e}{-16}$ \\
6
& 74
& 3842
& $1.68\mathrm{e}{-17}$
& $90.53\%$
& $2.08\mathrm{e}{-16}$
& $3.56\mathrm{e}{-16}$ \\
\bottomrule
\end{tabular}
\end{adjustbox}

\medskip

\textbf{(b) Reproduction, sampled Lebesgue factors, and noise amplification}
\par\smallskip

\begin{adjustbox}{max width=\textwidth}
\begin{tabular}{cccccc}
\toprule
$d$
& $E_{\rm rep}^{\max}$
& $\operatorname{median}_j \widehat{\lambda}_j$
& $\max_j \widehat{\lambda}_j$
& $\operatorname{median}\mathcal{A}\,/\,q_{0.95}(\mathcal{A})$
& $\max\mathcal{A}$ \\
\midrule
5
& $5.55\mathrm{e}{-15}$
& 16.07
& 44.25
& $2.31$--$2.32\,/\,4.30$--$4.33$
& 12.23 \\
6
& $5.11\mathrm{e}{-15}$
& 21.17
& 92.63
& $2.63$--$2.66\,/\,4.86$--$4.94$
& 20.12 \\
\bottomrule
\end{tabular}
\end{adjustbox}
}

\end{table}

\section{Application to Computational Fluid Dynamics data on the torus}\label{sec:cfd}

We now test the method on data arising from Computational Fluid Dynamics simulations \cite{Jakob,gerris}. The dataset contains a two-dimensional velocity field on a periodic square domain, with components
\[
\bs{u}(x,y)=(u(x,y),v(x,y)).
\]
The field originates from the numerical simulation of an incompressible flow and is therefore governed, at the modelling level, by the incompressible Navier--Stokes equations
\begin{equation*}
\frac{\partial \bs{u}}{\partial t}+(\bs{u}\cdot\nabla)\bs{u}=-\nabla p+\nu\Delta\bs{u},
\qquad \nabla\cdot\bs{u}=0,
\end{equation*}
where $p$ is the pressure and $\nu$ is the kinematic viscosity. The data are defined on a periodic $512\times512$ grid and are mapped to the torus by setting $\alpha=2\pi x$, $\beta=2\pi y,
$ and
\begin{equation*}
\begin{aligned}
X(\alpha,\beta)=\big(& (R+r\cos\beta)\cos\alpha,\;
(R+r\cos\beta)\sin\alpha,\;
 r\sin\beta\big).
\end{aligned}
\end{equation*}
with $R=3$ and $r=1$.
We interpret the two velocity components in the orthonormal tangent frame of the toroidal parametrization. More precisely, we define
\[
\boldsymbol e_{\alpha}
=
\frac{\boldsymbol X_{\alpha}}
     {\|\boldsymbol X_{\alpha}\|_2}
=
(-\sin\alpha,\cos\alpha,0),
\]
and
\[
\boldsymbol e_{\beta}
=
\frac{\boldsymbol X_{\beta}}
     {\|\boldsymbol X_{\beta}\|_2}
=
(-\sin\beta\cos\alpha,
-\sin\beta\sin\alpha,
\cos\beta).
\]
The reference tangent velocity field is then obtained through the
orthonormal tangent lifting
\[
\boldsymbol{\mathcal U}
=
u\,\boldsymbol e_{\alpha}
+
v\,\boldsymbol e_{\beta}.
\]
After reconstructing the two scalar components separately, the
corresponding approximating tangent field is
\[
\boldsymbol{\mathcal U}_h
=
u_h\,\boldsymbol e_{\alpha}
+
v_h\,\boldsymbol e_{\beta}.
\]
Since the tangent frame is orthonormal,
\[
\|\boldsymbol{\mathcal U}\|_2
=
\sqrt{u^2+v^2},
\qquad
\|\boldsymbol{\mathcal U}
-\boldsymbol{\mathcal U}_h\|_2
=
\sqrt{(u-u_h)^2+(v-v_h)^2}.
\]
Figure~\ref{fig:cfd_field} compares the velocity field on the original
periodic computational domain with its orthonormal tangent lifting onto
the torus.

\begin{figure}[htbp]
\centering
\begin{minipage}{0.48\textwidth}\centering\includegraphics[width=\textwidth]{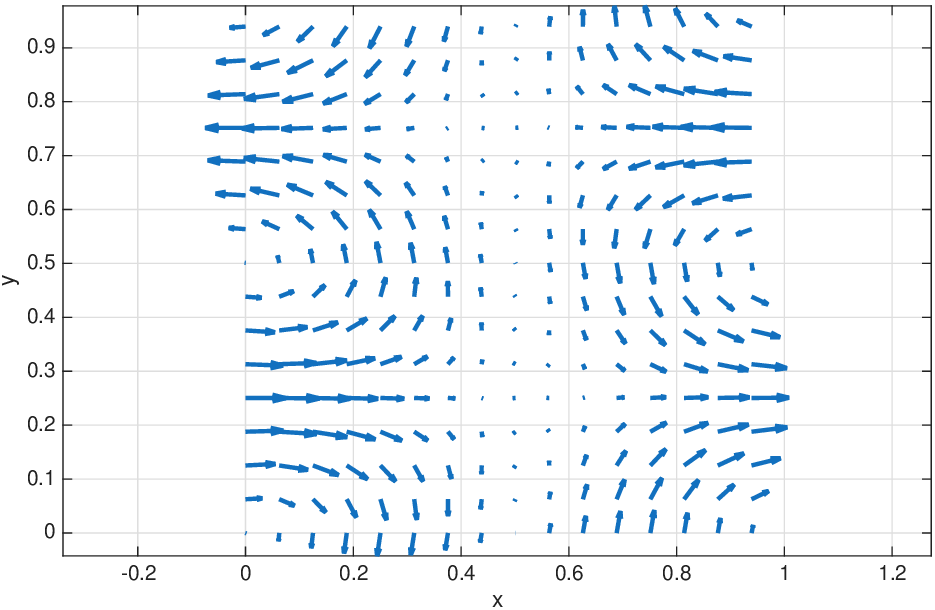}\end{minipage}\hfill
\begin{minipage}{0.48\textwidth}\centering\includegraphics[width=\textwidth]{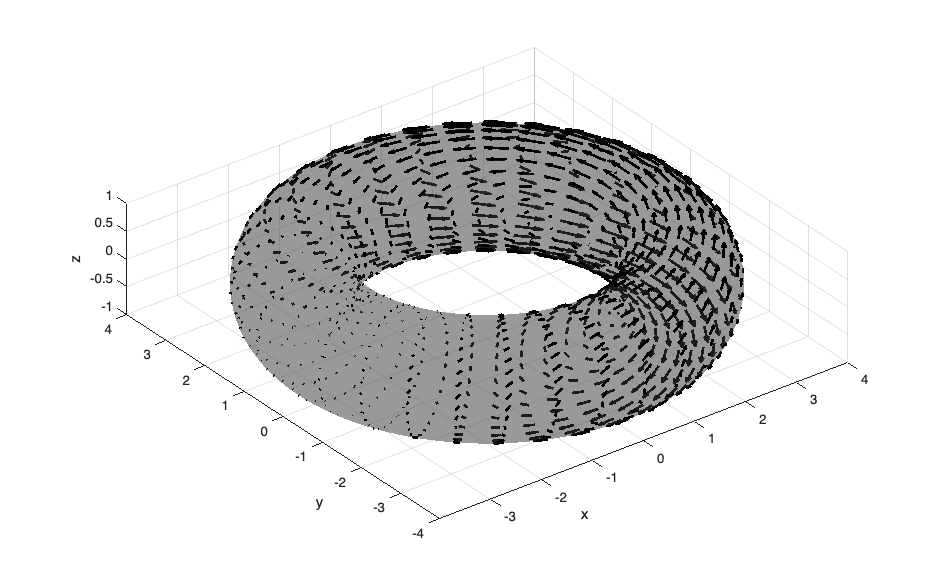}\end{minipage}
\caption{Velocity field extracted from the CFD dataset:
original periodic computational domain (left) and orthonormal tangent lifting onto the torus (right).}
\label{fig:cfd_field}
\end{figure}

The interpolation tests use $2000$ data points, split into $1800$ training nodes and $200$ validation nodes. The training nodes are used to construct the local toroidal interpolants and the compactly supported weights, while the validation nodes are used only to assess accuracy. In the scalar component experiments, the construction generated $244$ local interpolation stencils.

\subsection{Velocity components and magnitude}

The componentwise interpolation of $u$ and $v$ is highly accurate. The component $u$ has mean error $3.4561e-6$ and maximum error $5.5247e-5$, while the component $v$ has mean error $1.3859e-6$ and maximum error $1.6019e-5$. Table~\ref{tab:cfd_scalar_errors} summarizes the scalar results, including the velocity magnitude $\|\boldsymbol{\mathcal U}\|_2=(u^2+v^2)^{1/2}$. The reference and reconstructed values of the component $u$ are compared in Figure~\ref{fig:u_cfd}, while Figure~\ref{fig:mag_cfd} displays the reference and reconstructed velocity magnitudes.

The velocity magnitude is not interpolated as an independent scalar quantity. After reconstructing the two velocity components separately, we define
\[
\|\boldsymbol{\mathcal U}_h\|_2=\bigl(u_h^2+v_h^2\bigr)^{1/2}.
\]
At every validation node, the reverse triangle inequality gives
\[
\bigl|\|\boldsymbol{\mathcal U}\|_2-\|\boldsymbol{\mathcal U}_h\|_2\bigr|
\leq
\|\boldsymbol{\mathcal U}-\boldsymbol{\mathcal U}_h\|_2.
\]
Consequently, the maximum, mean, and root mean square errors of the reconstructed magnitude are bounded above by the corresponding vector-field errors. The numerical values reported in Table~\ref{tab:cfd_scalar_errors} are consistent with this estimate.

\begin{table}[htbp]
\centering
\caption{Error indicators for scalar quantities extracted from the CFD velocity field.}
\label{tab:cfd_scalar_errors}
\begin{tabular}{lccc}
\toprule
Quantity & $E_{\operatorname{\max}}$ & $E_{\operatorname{mean}}$ & $E_{\operatorname{RMS}}$ \\
\midrule
$u$ & $5.5247\mathrm{e}{-5}$ & $3.4561\mathrm{e}{-6}$ & $7.7880\mathrm{e}{-6}$ \\
$v$ &  $1.6019\mathrm{e}{-5}$ & $1.3859\mathrm{e}{-6}$& $2.5329\mathrm{e}{-6}$ \\
$\|\boldsymbol{\mathcal U}\|_2$ & $5.4015\mathrm{e}{-5}$ & $3.3540\mathrm{e}{-6}$ & $7.6888\mathrm{e}{-6}$ \\
\bottomrule
\end{tabular}
\end{table}

\begin{figure}[htbp]
\centering
\begin{minipage}{0.48\textwidth}\centering\includegraphics[width=\textwidth]{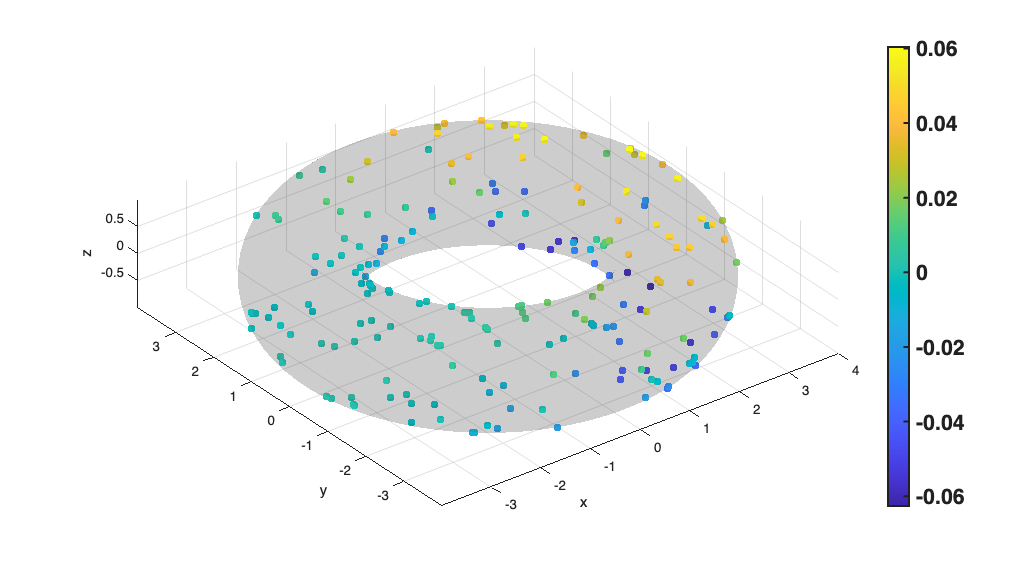}\end{minipage}\hfill
\begin{minipage}{0.48\textwidth}\centering\includegraphics[width=\textwidth]{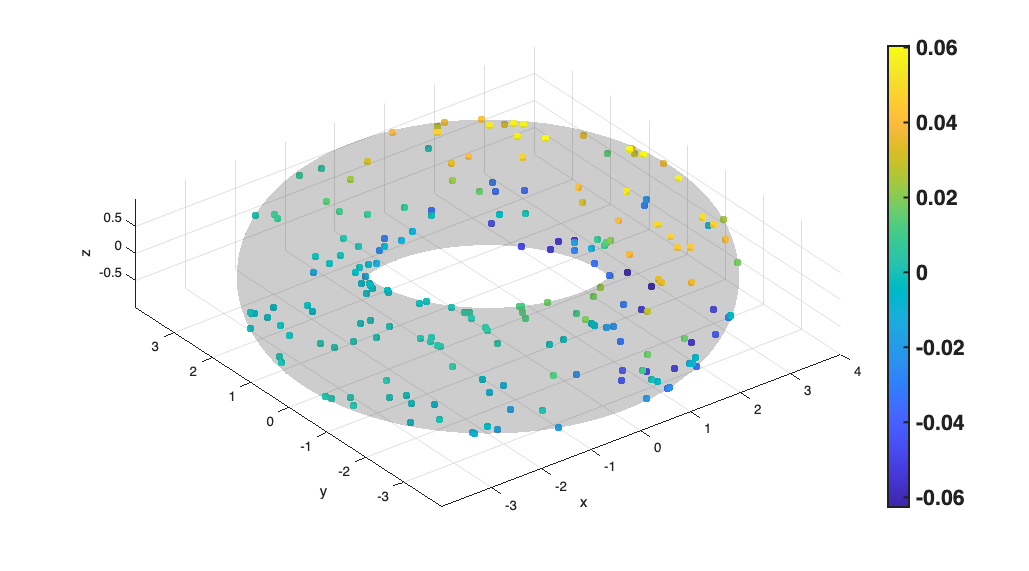}\end{minipage}
\caption{Velocity component $u$ on validation nodes: reference values (left) and compactly supported multinode Shepard reconstruction (right).}
\label{fig:u_cfd}
\end{figure}

\begin{figure}[htbp]
\centering
\begin{minipage}{0.48\textwidth}\centering\includegraphics[width=\textwidth]{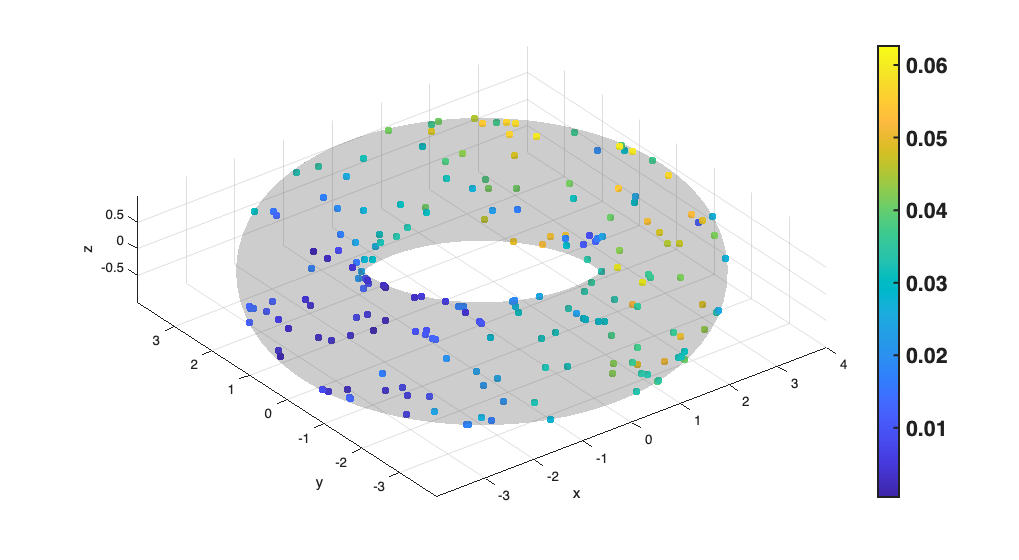}\end{minipage}\hfill
\begin{minipage}{0.48\textwidth}\centering\includegraphics[width=\textwidth]{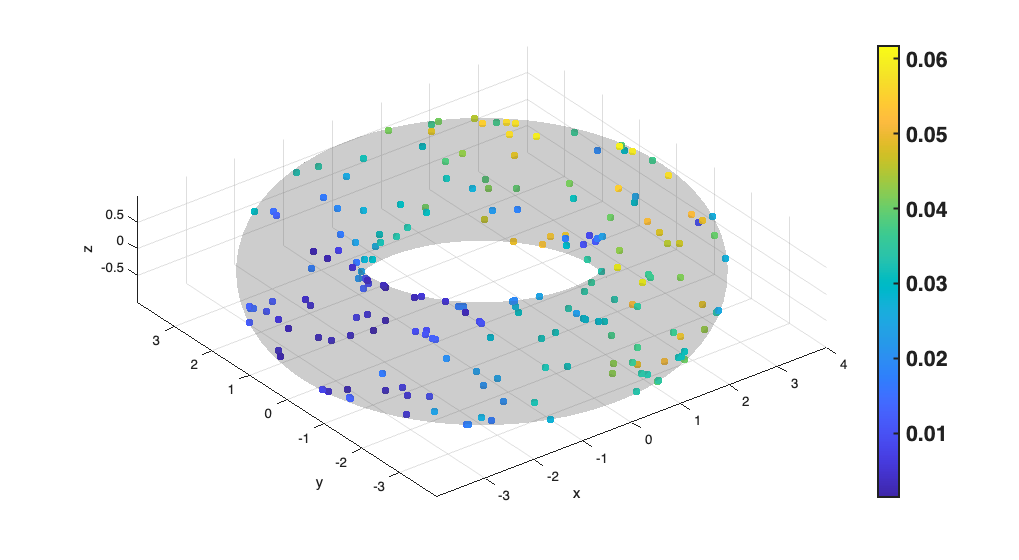}\end{minipage}
\caption{Velocity magnitude on validation nodes: reference values (left) and reconstruction obtained from the interpolated components $u_h$ and $v_h$ (right).}
\label{fig:mag_cfd}
\end{figure}

\subsection{Vector field reconstruction}

For the full tangent-field reconstruction, the two scalar compactly supported multinode Shepard interpolants are combined through the orthonormal tangent frame
\[
\boldsymbol{\mathcal U}_h
=
u_h\boldsymbol e_{\alpha}
+
v_h\boldsymbol e_{\beta}.
\]
The tangent-field error at a validation node is
\[
E(\boldsymbol x)
=
\|\boldsymbol{\mathcal U}(\boldsymbol x)
-\boldsymbol{\mathcal U}_h(\boldsymbol x)\|_2
=
\sqrt{(u-u_h)^2+(v-v_h)^2}.
\]
The numerical indicators are
\[
E_{\operatorname{mean}}=4.0557e-6,\qquad
E_{\max}=5.5268e-5,\qquad
E_{\operatorname{RMS}}=8.1895e-6.
\]
Figure~\ref{fig:vector_cfd} shows the reference and reconstructed tangent velocity fields, while Figure~\ref{fig:vector_error} reports the local tangent-field error. The agreement between the two tangent-field plots indicates that both direction and magnitude are preserved by the interpolation process.

\begin{figure}[htbp]
\centering
\begin{minipage}{0.48\textwidth}\centering\includegraphics[width=\textwidth]{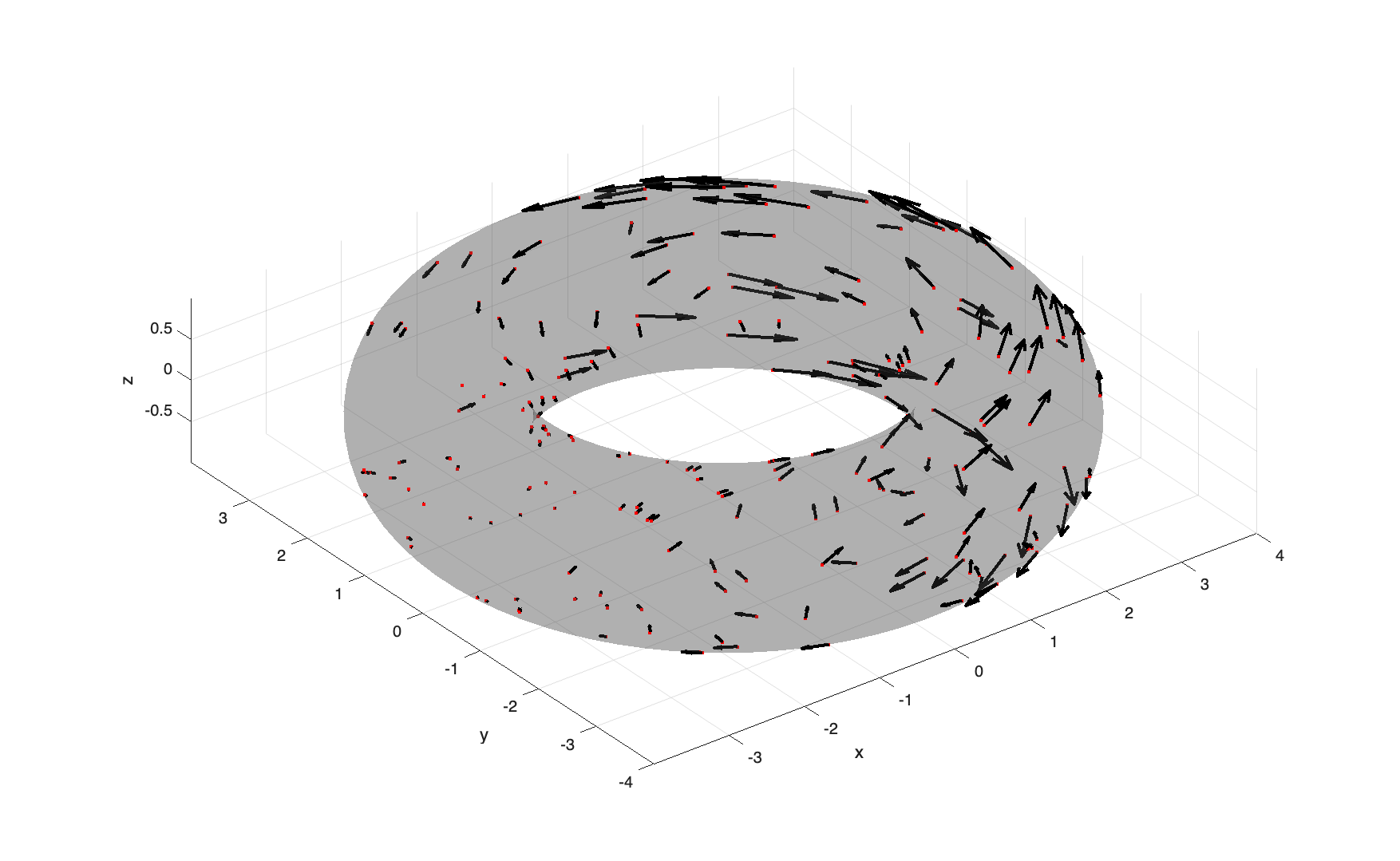}\end{minipage}\hfill
\begin{minipage}{0.48\textwidth}\centering\includegraphics[width=\textwidth]{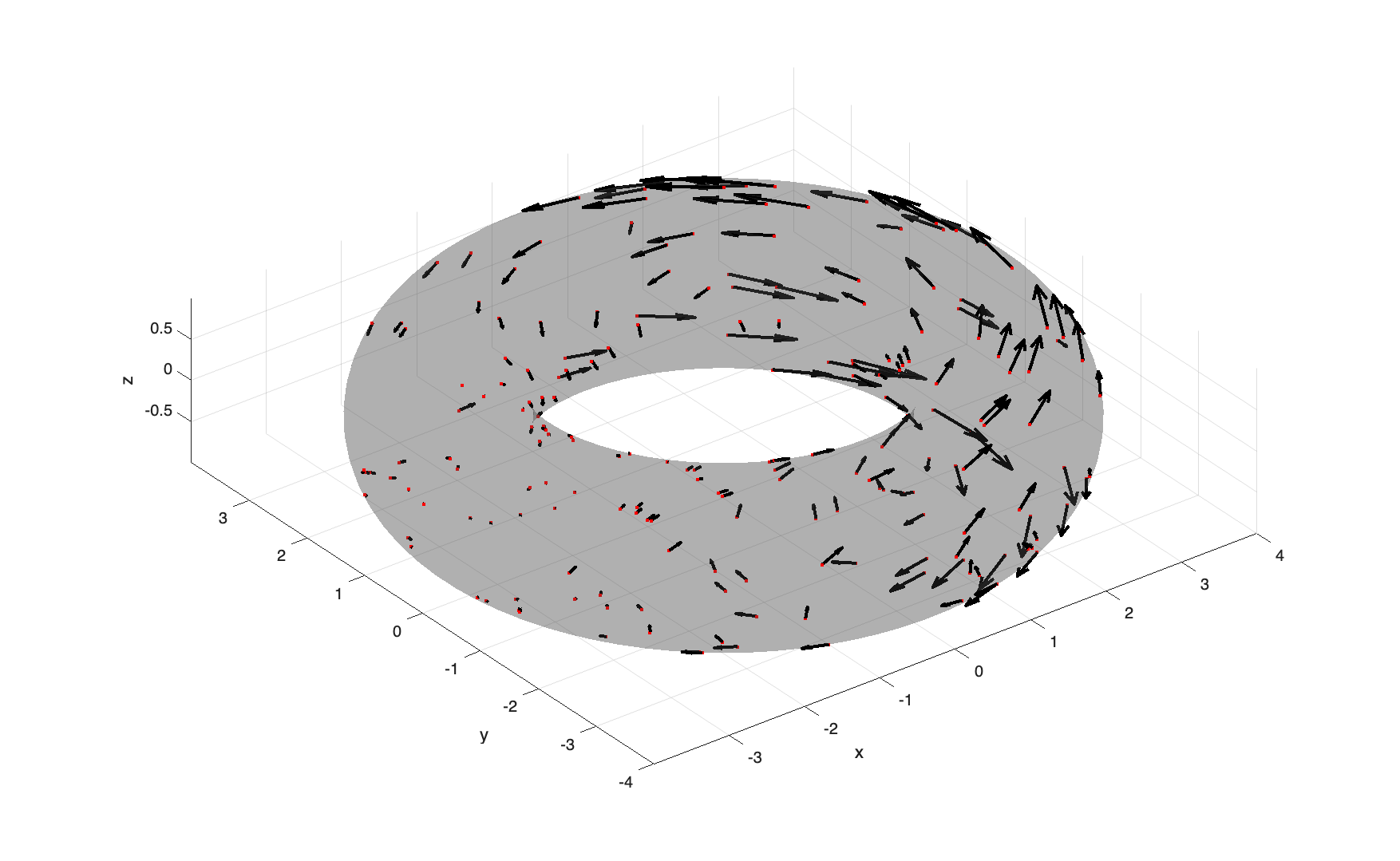}\end{minipage}
\caption{Tangent velocity field on the validation nodes:
reference field (left) and compactly supported multinode Shepard reconstruction obtained through the
orthonormal tangent frame (right).}
\label{fig:vector_cfd}
\end{figure}

\begin{figure}[htbp]
\centering
\includegraphics[width=0.5\textwidth]{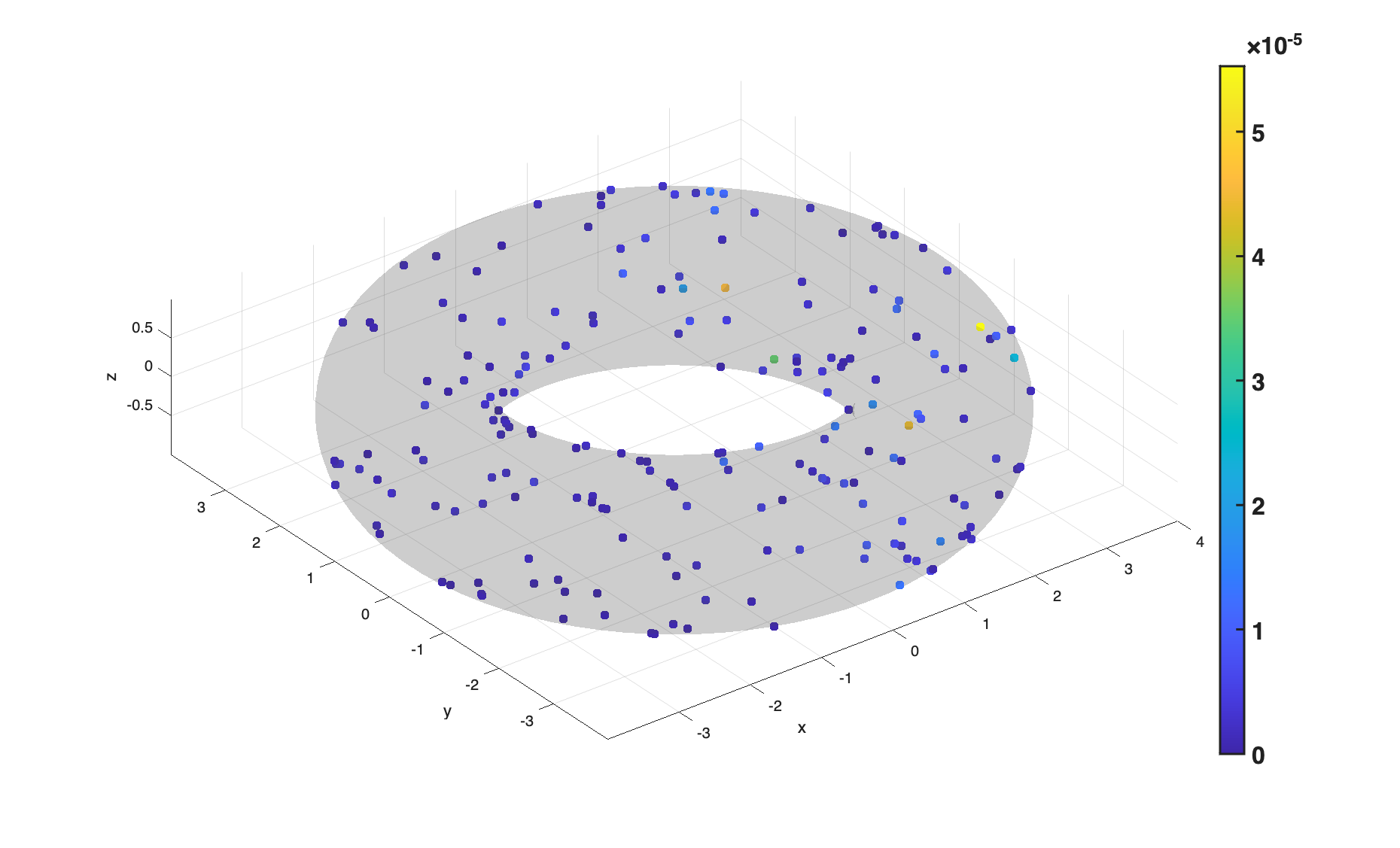}
\caption{Tangent-field interpolation error
$\|\boldsymbol{\mathcal U}-\boldsymbol{\mathcal U}_h\|_2$
on the validation nodes.}
\label{fig:vector_error}
\end{figure}

\section{Implementation and LU-based stencil construction}\label{sec:algorithms}

The implementation follows the interpolatory multinode construction described
above and consists of three stages: generation of the reduced toroidal basis,
selection of a covering family of minimal stencils, and stable evaluation of the
compactly supported blend.  The analytical experiments and stability diagnostics reported in Section~4
were carried out in MATLAB R2024a.  The underlying implementation extends the MATLAB framework for
multinode Shepard interpolation developed in \cite{dell2024multinode}.

\subsection{Reduced basis and local Vandermonde matrices}

Let $Y=\{\bs{y}_1,\ldots,\bs{y}_q\}\subset\TT$, let
$\bs{c}\in\RR^3$, and let $\delta>0$.  For the ordered reduced basis
$\mathcal B_d=\{\beta_1,\ldots,\beta_{m_d}\}$, we denote by
\begin{equation*}
 \mathcal V_d(Y;\bs{c},\delta)
 =\left[
 \beta_k\!\left(\frac{\bs{y}_i-\bs{c}}{\delta}\right)
 \right]_{\substack{1\le i\le q\\1\le k\le m_d}}
 \in\RR^{q\times m_d}.
\end{equation*}
the local toroidal Vandermonde matrix. 
The translated and scaled basis functions remain restrictions of
ambient polynomials of total degree at most $d$. Whenever the resulting
square Vandermonde matrix is nonsingular, these $m_d$ functions are
linearly independent in $\mathcal H_d(\mathbb T)$ and therefore form a
basis of the same restriction space.

\begin{algorithm}[H]
\caption{Reduced toroidal Vandermonde matrix in local coordinates}
\label{alg:local-vandermonde}
\begin{algorithmic}[1]
\Require Degree $d$, points $Y=\{\bs{y}_1,\ldots,\bs{y}_q\}$,
center $\bs{c}$, scale $\delta>0$
\Ensure Matrix $\mathcal V_d(Y;\bs{c},\delta)$
\State Generate all monomials $x^iy^jz^k$ satisfying $i+j+k\le d$.
\State Remove the monomials divisible by $\operatorname{LM}(F)=x^4$.
\State Order the remaining monomials as
$\mathcal B_d=\{\beta_1,\ldots,\beta_{m_d}\}$.
\For{$i=1,\ldots,q$}
  \State $\widehat{\bs{y}}_i\gets(\bs{y}_i-\bs{c})/\delta$.
  \For{$k=1,\ldots,m_d$}
    \State $[\mathcal V_d]_{ik}\gets\beta_k(\widehat{\bs{y}}_i)$.
  \EndFor
\EndFor
\end{algorithmic}
\end{algorithm}

\subsection{Selection of minimal stencils by row-pivoted LU}

The stencil family is constructed once the node set
$X=\{\bs{x}_1,\ldots,\bs{x}_n\}$ and the degree $d$ have been fixed.  The
algorithm maintains a list $I_{\rm rem}$ of indices not yet covered.  Its first
entry is used as an anchor, whereas the local candidate points are always drawn
from the full set $X$; consequently, different stencils may overlap.

The initial candidate neighborhood is an axis-aligned box in $\RR^3$.  In the
implementation, its density-dependent length parameter is
\begin{equation*}
 \ell_d=\left(\frac{m_{d+1}\,|\Omega_X|}{n}\right)^{1/2},
 \qquad
 |\Omega_X|=(x_{\max}-x_{\min})(y_{\max}-y_{\min})(z_{\max}-z_{\min}).
\end{equation*}
and the box is enlarged in increments of ten percent until it contains at least
$m_{d+1}=2((d+1)^2+1)$ nodes.  The candidates are then ordered by increasing
ambient Euclidean distance from the anchor.

Let $Y_j=\{\bs{y}_{j,1},\ldots,\bs{y}_{j,q_j}\}$ denote the ordered candidate
set and let
\[
 \bs{c}^{\rm cand}_j=\frac1{q_j}\sum_{i=1}^{q_j}\bs{y}_{j,i}.
\]
The rectangular candidate matrix is
\begin{equation}
 A_j=\mathcal V_d(Y_j;\bs{c}^{\rm cand}_j,1).
 \label{eq:candidate-vandermonde}
\end{equation}
Thus this matrix is centered but deliberately not scaled.  Partial row pivoting
in
\begin{equation*}
 \Pi_jA_j=L_jU_j.
\end{equation*}
produces a permutation vector $\pi_j$.  The first $m_d$ pivot rows define the
minimal stencil
\begin{equation*}
 \sigma_j=\{\bs{y}_{j,\pi_j(1)},\ldots,
             \bs{y}_{j,\pi_j(m_d)}\}.
\end{equation*}
This is the same Leja-type extraction principle already used in the spherical
multinode construction and in numerical algorithms for discrete Leja points
\cite{DellAccioSphere,Bos,DellAccioDiTommasoSiar2021}.  The indices selected at
the current step are removed from $I_{\rm rem}$, and the process is repeated
until the list is empty.

\begin{algorithm}[H]
\caption{LU-based construction of the stencil family}
\label{alg:lu-stencils}
\begin{algorithmic}[1]
\Require Nodes $X=\{\bs{x}_1,\ldots,\bs{x}_n\}$, degree $d$,
initial length $\ell_d$
\Ensure Covering family $\Sigma=\{\sigma_1,\ldots,\sigma_L\}$
\State $m\gets m_d$, $q_{\min}\gets m_{d+1}$,
$I_{\rm rem}\gets\{1,\ldots,n\}$, $\Sigma\gets\varnothing$.
\While{$I_{\rm rem}\ne\varnothing$}
  \State Let $a$ be the first index in $I_{\rm rem}$ and set $s\gets0$.
  \Repeat
    \State $Y\gets\{\bs{x}_i\in X:
    \norm{\bs{x}_i-\bs{x}_a}_{\infty}
    \le (1+0.1s)\ell_d/2\}$.
    \State $s\gets s+1$.
  \Until{$\#Y\ge q_{\min}$}
  \State Order the points of $Y$ by increasing
  $\norm{\bs{y}-\bs{x}_a}_2$.
  \State $\bs{c}_{Y}\gets(\#Y)^{-1}\sum_{\bs{y}\in Y}\bs{y}$.
  \State $A\gets\mathcal V_d(Y;\bs{c}_{Y},1)$.
  \State Compute $A(\pi,:)=LU$ by partial row pivoting.
  \State $\sigma\gets\{\bs{y}_{\pi(1)},\ldots,
  \bs{y}_{\pi(m)}\}$ and $\Sigma\gets\Sigma\cup\{\sigma\}$.
  \State Remove the indices of the points in $\sigma$ from $I_{\rm rem}$.
\EndWhile
\end{algorithmic}
\end{algorithm}

The distinction between the matrices in \eqref{eq:candidate-vandermonde} and in the local solve below
is essential: the candidate matrix is centered and unscaled, whereas the final
square matrix is both centered and isotropically scaled.  The diagnostics in
Section~\ref{sec:high-degree-diagnostics} showed that scaling the candidate
matrix changes only a small fraction of the selected nodes and does not yield a
systematic improvement in the conditioning of the final square systems.

\subsection{Local interpolation and stable compact blending}

For each selected stencil
$\sigma_j=\{\bs{x}_{j_1},\ldots,\bs{x}_{j_{m_d}}\}$, the implementation sets
\begin{equation*}
 \overline{\bs{x}}_j
 =\frac1{m_d}\sum_{k=1}^{m_d}\bs{x}_{j_k},
 \qquad
 \delta_j
 =\max_{1\le k\le m_d}
   \norm{\bs{x}_{j_k}-\overline{\bs{x}}_j}_2.
\end{equation*}
and forms the final square matrix
\begin{equation*}
 V_j=\mathcal V_d(\sigma_j;\overline{\bs{x}}_j,\delta_j).
\end{equation*}
The coefficient vector is obtained from
\begin{equation*}
 V_j\bs{c}_j=\bs{f}_j.
\end{equation*}
by Gaussian elimination with partial pivoting, implemented by MATLAB's direct
backslash solver.  The local polynomial is evaluated in the same centered and
scaled coordinates,
\begin{equation}
 P_j[f](\bs{x})=
 \sum_{k=1}^{m_d}c_{j,k}\,
 \beta_k\!\left(\frac{\bs{x}-\overline{\bs{x}}_j}{\delta_j}\right).
 \label{eq:local-polynomial-evaluation}
\end{equation}

The support radius is then set to
$R_j=\operatorname{diam}_E(\sigma_j)+H$.  A stencil is declared active at
$\bs{x}$ only after the exact geometric test
\begin{equation*}
 d_E(\bs{x},\bs{x}_{j_k})<R_j,
 \qquad k=1,\ldots,m_d.
\end{equation*}
has been satisfied.  A preliminary barycentric test is used only as a cheap
filter and does not alter the active set.

At data nodes, the prescribed value is returned directly, thereby avoiding the
singular inverse-distance expression.  At all other active points the
unnormalized weight is evaluated logarithmically:
\begin{equation}
 \log\omega_{\mu,j}(\bs{x})
 =\mu\sum_{k=1}^{m_d}
 \left[
 \log\!\left(1-\frac{d_E(\bs{x},\bs{x}_{j_k})}{R_j}\right)
 -\log d_E(\bs{x},\bs{x}_{j_k})
 \right].
 \label{eq:log-unnormalized-weight}
\end{equation}
The normalized blend is computed by a log-sum-exp normalization.  More
precisely, with
$a(\bs{x})=\max_{j\in J_{\bs{x}}}\log\omega_{\mu,j}(\bs{x})$, the returned
value is
\begin{equation}
 \widetilde{\mathcal M}_{\mu}[f](\bs{x})
 =\frac{\displaystyle
   \sum_{j\in J_{\bs{x}}}
   e^{\log\omega_{\mu,j}(\bs{x})-a(\bs{x})}P_j[f](\bs{x})}
  {\displaystyle
   \sum_{j\in J_{\bs{x}}}
   e^{\log\omega_{\mu,j}(\bs{x})-a(\bs{x})}}.
 \label{eq:log-sum-exp-blend}
\end{equation}
For vectorized evaluation, the maximum and the two scaled sums in \eqref{eq:log-sum-exp-blend}
are updated incrementally.  This avoids overflow and underflow without
modifying the mathematical operator.  If no active stencil is found, the code
returns an undefined value and issues a warning; under the hypotheses of
Lemma~\ref{lem:coverage}, this situation cannot occur when $H>h_{X,\TT}$.

\begin{algorithm}[H]
\caption{Stable evaluation of the CSMS operator}
\label{alg:stable-csms}
\begin{algorithmic}[1]
\Require Nodes $X$, data $f$, stencils $\Sigma$, degree $d$, parameter $\mu$,
upper fill-distance bound $H$, evaluation point $\bs{x}$
\Ensure $\widetilde{\mathcal M}_{\mu}[f](\bs{x})$
\If{$\bs{x}=\bs{x}_i$ for some data node}
  \State \Return $f(\bs{x}_i)$.
\EndIf
\State $\mathcal J\gets\varnothing$.
\For{$\sigma_j\in\Sigma$}
  \State Compute $\overline{\bs{x}}_j$, $\delta_j$, $V_j$, and solve
  $V_j\bs{c}_j=\bs{f}_j$.
  \State $R_j\gets\operatorname{diam}_E(\sigma_j)+H$.
  \If{$d_E(\bs{x},\bs{x}_{j_k})<R_j$ for every $k=1,\ldots,m_d$}
    \State Evaluate $P_j[f](\bs{x})$ and $\log\omega_{\mu,j}(\bs{x})$ by
    \eqref{eq:local-polynomial-evaluation} and \eqref{eq:log-unnormalized-weight}.
    \State $\mathcal J\gets\mathcal J\cup\{j\}$.
  \EndIf
\EndFor
\State $a\gets\max_{j\in\mathcal J}\log\omega_{\mu,j}(\bs{x})$.
\State \Return the normalized value in \eqref{eq:log-sum-exp-blend}.
\end{algorithmic}
\end{algorithm}

\section{Conclusion}

We have presented a compactly supported multinode Shepard interpolation
method for scattered data on the torus embedded in $\mathbb{R}^3$.
The construction combines minimal local polynomial interpolation,
compactly supported partition-of-unity weights, and a polynomial basis
adapted to the algebraic structure of the torus. The redundancies
induced by the quartic defining equation are removed through the
quotient by the corresponding ideal and the construction of a
Gröbner-reduced monomial basis. The local equivalence between the
periodic parameter distance and the Euclidean distance inherited from
the ambient space justifies the use of Euclidean distances within
sufficiently small supports.

Using a smooth normal extension in a tubular neighbourhood of the
torus and a local ambient Taylor argument, we established the error
estimate
\[
 \bigl\|f-\widetilde {\mathcal{M}}_{\mu,h}[f]\bigr\|_{L^\infty(\mathbb T)}
 \le
 C_{\mathbb T,d}\,
 \Lambda_h\,\rho_h^{d+1}
 \|f\|_{C^{d+1}(\mathbb T)}.
\]
Under uniform locality and stability assumptions on the interpolation
stencils, this yields convergence of order $d+1$ with respect to the
fill distance.

The numerical experiments confirm polynomial reproduction and show a
marked reduction of the interpolation errors under node refinement and
degree enrichment. The additional diagnostics for $d=5$ and $d=6$
show that, although the final square Vandermonde matrices become
increasingly ill-conditioned in their monomial representation, the
local factorizations and solves retain backward errors close to machine
precision. Moreover, all modes of the reduced polynomial spaces are
reproduced to nearly machine precision, while the observed
amplification of small perturbations remains moderate for the tests
considered. These results indicate that the reciprocal condition
estimate of the coefficient matrix should be distinguished from the
actual sensitivity of the local interpolation operator, which is more
directly reflected by the corresponding Lebesgue factors. They provide
evidence of high numerical accuracy for the present smooth data, but
do not constitute a proof of uniform high-degree stability.

Finally, the application to Computational Fluid Dynamics data shows
that the method accurately reconstructs the velocity components, the
velocity magnitude obtained from the interpolated components, and the
associated tangent velocity field through an orthonormal lifting. The
results demonstrate that compactly supported multinode Shepard
operators provide an effective interpolatory meshfree tool for
scattered data on toroidal geometries.

\appendix
\section{Generation of the reduced toroidal basis}
\label{app:buchberger}
\begin{lstlisting}[basicstyle=\ttfamily\small,breaklines=true]
baseHd[d_] := 
 Module[{count = 0, parts, str}, 
  For[i = 0, i <= Min[3, d], i++, 
   For[j = 0, j <= d - i, j++, 
    For[k = 0, k <= d - i - j, k++, parts = {};
     
     (* x *)
     If[i == 1, AppendTo[parts, "x"]];
     If[i > 1, AppendTo[parts, "x^" <> ToString[i]]];
     
     (* y *)
     If[j == 1, AppendTo[parts, "y"]];
     If[j > 1, AppendTo[parts, "y^" <> ToString[j]]];
     
     (* z *)
     If[k == 1, AppendTo[parts, "z"]];
     If[k > 1, AppendTo[parts, "z^" <> ToString[k]]];
     
     (* construction of the monomial *)
     If[parts == {}, str = "1", str = StringJoin[parts]];
     
     Print[str];
     count++;
     ]]];
  
  Print["Dimension = ", count];
  ]
\end{lstlisting}

\section*{Declarations}

\textbf{Conflict of interest.}
The authors declare that they have no conflict of interest.

\medskip
\noindent\textbf{Funding.}
This research was supported by the GNCS-INdAM 2026 project
``Metodi polinomiali e kernel per l'approssimazione da dati discreti e
integrali con software OS''.

\medskip
\noindent\textbf{Author contributions.}
Francesco Dell'Accio, Filomena Di Tommaso, Rossana Lammirato, and
Francesco Larosa contributed equally to the conception and scientific
development of this work. The theoretical analysis, methodological
development, software implementation, numerical experimentation,
validation, interpretation of the results, and preparation and revision
of the manuscript were carried out collaboratively. All authors read
and approved the final manuscript and agree to be accountable for all
aspects of the work.

\medskip
\noindent\textbf{Acknowledgements.}
This research was carried out as part of RITA ``Research ITalian Network on
Approximation'' and as part of the UMI group ``Teoria dell'Approssimazione e
Applicazioni''. The authors are members of the INdAM-GNCS Research Group.

\medskip
\noindent\textbf{Data availability.}
The CFD data used in the numerical experiments are cited in the manuscript.
Implementation files and derived data can be made available upon reasonable
request.

\bibliographystyle{spmpsci}
\bibliography{references}

@book{Cox,
  author    = {Cox, David A. and Little, John and O'Shea, Donal},
  title     = {{I}deals, {V}arieties, and {A}lgorithms},
  publisher = {Springer},
  year      = {2007}
}

@book{BochnakCosteRoy,
  author    = {Bochnak, Jacek and Coste, Michel and Roy, Marie-Fran{\c{c}}oise},
  title     = {Real Algebraic Geometry},
  publisher = {Springer},
  year      = {1998}
}

@book{Wendland,
  author    = {Wendland, Holger},
  title     = {{S}cattered {D}ata {A}pproximation},
  series    = {Cambridge Monographs on Applied and Computational Mathematics},
  publisher = {Cambridge University Press},
  year      = {2005}
}

@article{Shepard,
  author  = {Shepard, Donald},
  title   = {{A} {T}wo-{D}imensional {I}nterpolation {F}unction for {I}rregularly-{S}paced Data},
  journal = {Proceedings of the 23rd ACM National Conference},
  pages   = {517--524},
  year    = {1968}
}

@article{DellAccioSphere,
  author  = {Dell'Accio, Francesco and Di Tommaso, Filomena},
  title   = {{I}nterpolation of {S}cattered {D}ata on the {S}phere by {M}ultinode {S}hepard {O}perators},
  journal = {J. Sci. Comput.},
  volume  = {104},
  pages   = {96},
   number={3},
  year    = {2025}
}

@article{Farwig,
  author  = {Farwig, R{"u}diger},
  title   = {{R}ate of {C}onvergence of {S}hepard's {G}lobal {I}nterpolation {F}ormula},
  journal = {Math. Comput.},
  volume  = {46},
  number  = {174},
  pages   = {577--590},
  year    = {1986}
}

@article{DiTommaso,
  author  = {Dell'Accio, Francesco and Di Tommaso, Filomena},
  title   = {{S}cattered {D}ata {I}nterpolation by {S}hepard's {L}ike {M}ethods: {C}lassical {R}esults and {R}ecent {A}dvances},
  journal = {Dolomites Res. Notes Approx.},
  volume  = {10},
  pages   = {32--39},
  year    = {2017}
}

@article{Bos,
  author  = {Bos, Len and De Marchi, Stefano and Sommariva, Alvise and Vianello, Marco},
  title   = {{C}omputing {M}ultivariate {F}ekete and {L}eja {P}oints by {N}umerical {L}inear {A}lgebra},
  journal = {SIAM J. Numer. Anal.},
  volume  = {48},
  number  = {5},
  pages   = {1984--1999},
  year    = {2010}
}

@article{Tien1997,
  author = {Wong, Tien-Tsin and Luk, Wai-Shing and Heng, Pheng-Ann},
  title = {Sampling with {H}ammersley and {H}alton {P}oints},
  journal = {J. Graph. Tools},
  volume = {2},
  number = {2},
  pages = {9--24},
  year  = {1997}
}

@article{gerris,
  author = {Popinet, S.},
  title = {{F}ree {C}omputational {F}luid {D}ynamics},
  journal = {ClusterWorld},
  year = {2004},
  volume = {2},
  number = {6},
  url = {http://gfs.sf.net/}
}

@article{Jakob,
  author = {Jakob, Jakob and Gross, Markus and G{\"u}nther, Tobias},
  title = {A {F}luid {F}low {D}ata {S}et for {M}achine {L}earning and its {A}pplication to {N}eural {F}low {M}ap {I}nterpolation},
  journal = { IEEE Trans. Vis. Comput. Graph.},
  year = {2020}
}

@article{dell2024multinode,
  title={The {M}ultinode {S}hepard {M}ethod: {MATLAB} {I}mplementation},
  author={Dell'Accio, Francesco and Di Tommaso, Filomena and Larosa, Francesco},
  journal={J. approx. softw.},
  volume={1},
  number={2},
  year={2024}
}

@article{dellaccio2019rate,
  title={Rate of convergence of multinode {S}hepard operators},
  author={Dell'Accio, Francesco and Di Tommaso, Filomena},
  journal={Dolomites Res. Notes Approx.},
  volume={12},
  pages={1--6},
  year={2019}
}

@book{Lee,
  author    = {Lee, John M.},
  title     = {{I}ntroduction to {S}mooth {M}anifolds},
  publisher = {Springer},
  year      = {2012}
}

@article{hangelbroek2026,
  title={Generalized local polynomial reproductions},
  author={Hangelbroek, Thomas and Rieger, Christian and Wright, Grady B},
  journal={Found. Comput. Math.},
  pages={1--45},
  year={2026},
  doi={10.1007/s10208-026-09751-z},
  publisher={Springer}
}

@article{CurtisReid1972,
   author = {Curtis, A. R. and Reid, J. K.},
    title = {On the {A}utomatic {S}caling of {M}atrices for {G}aussian {E}limination},
    journal = {IMA J. Appl. Math.},
    volume = {10},
    number = {1},
    pages = {118-124},
    year = {1972}
}

@book{Higham2002,
  author= {Higham, N. J.},
  title = {Accuracy and Stability of Numerical Algorithms},
  edition = {2},
  publisher= {SIAM, Philadelphia},
  year = {2002}
}

@article{BosNewton2011,
  author  = {Bos, Len and De Marchi, Stefano and
             Sommariva, Alvise and Vianello, Marco},
  title   = {On {M}ultivariate {N}ewton {I}nterpolation at {D}iscrete {L}eja {P}oints},
  journal = {Dolomites Res. Notes Approx.},
  volume  = {4(Special Issue)},
  pages   = {15--20},
  year    = {2011}
}

@article{DellAccioDiTommasoSiar2021,
  author= {Dell'Accio, F. and Di Tommaso, F. and Siar, N.},
  title = {On the numerical computation of bivariate {L}agrange polynomials},
  journal= {Appl. Math. Lett.},
  volume= {112},
  pages = {106845},
  year = {2021}
}

\end{document}